\newcommand{\R}{\mathbb{R}}
\newcommand{\N}{\mathbb{N}}
\newcommand{\mc}[1]{\mathcal{#1}}
\newcommand{\ur}[1]{\mathrm{#1}}
\newcommand{\ure}{\ur{e}}
  \renewcommand{\labelenumi}{(\roman{enumi})}
\newcommand{\eps}{\varepsilon}
\newcommand{\vt}{\vartheta}
\newcommand{\gt}{>}
\newcommand{\lt}{<}
\newcommand{\defs}{\coloneqq}
\newcommand{\sfed}{\eqqcolon}
\newcommand{\ra}{\rightarrow}
\newcommand{\nea}{\nearrow}
\newcommand{\sea}{\searrow}
\newcommand{\ol}{\overline}
\newcommand{\wt}{\widetilde}
\newcommand{\dx}{\,\mathrm{d}x}
\newcommand{\dy}{\,\mathrm{d}y}
\newcommand{\dr}{\,\mathrm{d}r}
\newcommand{\dtau}{\,\mathrm{d}\tau}
\newcommand{\dsigma}{\,\mathrm{d}\sigma}
\newcommand{\drho}{\,\mathrm{d}\rho}
\newcommand{\ddt}{\frac{\mathrm{d}}{\mathrm{d}t}}
\newcommand{\hp}{\hphantom}
\newcommand{\pe}{\mathrel{\hp{=}}}
\newcommand{\tmax}{T_{\max}}
\newcommand{\intom}{\int_\Omega}
\newcommand{\Ombar}{\ol \Omega}
\newcommand{\leb}[2][\Omega]{\ensuremath{L^{#2}(#1)}}
\newcommand{\sob}[3][\Omega]{\ensuremath{W^{#2, #3}(#1)}}
\newcommand{\con}[2][\Ombar]{\ensuremath{C^{#2}(#1)}}
\newcommand{\pu}{\mathbbmss p}
\newcommand{\tops}{\texorpdfstring}
\newcommand{\maxmq}{\mathrm{M}(m, q)}
\newcommand{\onemaxmq}{1-\maxmq}
\newcommand{\nn}{\nonumber}
\newcommand{\into}{\int_{\Omega}}
\newcommand{\intb}{\int_{B_{r_0}}}
\newcommand{\intbr}{\int_0^{r_0}}
\newcommand{\intr}{\int_0^{R}}
\newcommand{\norm}[2][ ]{\|#2\|_{#1}}
\newcommand{\Om}{\Omega}
\newcommand{\F}{\mathcal{F}}
\newcommand{\D}{\mathcal{D}}
\newcommand{\pa}{\partial}
\renewenvironment{proof}[1][\proofname]{\par
  \pushQED{\qed}%
  \normalfont \topsep0\p@\relax
  \trivlist
  \item[\hskip\labelsep\scshape
  #1\@addpunct{.}]\ignorespaces
}{%
  \popQED\endtrivlist\@endpefalse
}
\newtheorem{base}{Base}[section]
\numberwithin{equation}{section}
\newtheorem{theorem}[base]{Theorem} \newtheorem*{theorem*}{Theorem}
\newtheorem{lemma}[base]{Lemma} \newtheorem*{lemma*}{Lemma}
 \newtheorem*{prop*}{Proposition}
 \newtheorem*{cor*}{Corollary}
\theoremstyle{definition}
\newtheorem{remark}[base]{Remark} \newtheorem*{remark*}{Remark}
 \newtheorem*{definition*}{Definition}
 \newtheorem*{example*}{Example}
 \newtheorem*{cond*}{Condition}
\title{Finite-time blow-up in fully parabolic quasilinear Keller--Segel systems with supercritical exponents}
\author[1]{Xinru Cao\footnote{e-mail: caoxinru@gmail.com}}
\author[2]{Mario Fuest\footnote{e-mail: fuest@ifam.uni-hannover.de, corresponding author}}
\affil[1]{School of Mathematics and Statistics, Donghua University, North Renmin Road 2999, 201620 Shanghai, China}
\affil[2]{Leibniz Universität Hannover, Institut für Angewandte Mathematik, Welfengarten 1, 30167 Hannover, Germany}
\begin{document}
\date{}
\maketitle

\KOMAoptions{abstract=true}
\begin{abstract}
\noindent We examine the possibility of finite-time blow-up of solutions to the fully parabolic quasilinear Keller--Segel model
\begin{align}\tag{$\star$}\label{prob:star}
  \begin{cases}
    u_t = \nabla \cdot ((u+1)^{m-1}\nabla u - u(u+1)^{q-1}\nabla v) & \text{in $\Omega \times (0, T)$}, \\
    v_t = \Delta v - v + u     & \text{in $\Omega \times (0, T)$}
    \end{cases}
\end{align}
in a ball $\Omega\subset \mathbb R^n$ with $n\geq 2$.
Previous results show that unbounded solutions exist for all $m, q \in \mathbb R$ with $m-q<\frac{n-2}{n}$, which, however, are necessarily global in time if $q \leq 0$.
It is expected that finite-time blow-up is possible whenever $q > 0$ but in the fully parabolic setting this has so far only been shown when $\max\{m, q\} \geq 1$.\\[0.2pt]
In the present paper, we substantially extend these findings.
Our main results for the two- and three-dimensional settings state that \eqref{prob:star} admits solutions blowing up in finite time if
\begin{align*}
  m-q<\frac{n-2}{n}
  \quad \text{and} \quad
  \begin{cases}
    q < 2m & \text{if } n = 2, \\
    q < 2m - \frac23 \text{ or } m > \frac23 & \text{if } n = 3,
  \end{cases}
\end{align*}
that is, also for certain $m, q$ with $\max\{m, q\} < 1$.
As a key new ingredient in our proof, we make use of (singular) pointwise upper estimates for $u$.
\\[0.5pt]
 \textbf{Key words:} {chemotaxis, finite-time blow-up, quasilinear Keller--Segel systems, supercritical parameters} \\
 \textbf{AMS Classification (2020):} {35B44 (primary); 35B33, 35K45, 35K59, 92C17 (secondary)}
\end{abstract}

\section{Introduction}
The delicate interplay between the smoothing effect of diffusion and the destabilizing nature of attractive cross-diffusion is illustrated by the quasilinear Keller--Segel system 
\begin{align}\label{prob:proto}
  \begin{cases}
    u_t = \nabla \cdot (\phi(u) \nabla u - \psi(u) \nabla v) & \text{in $\Omega \times (0, T)$}, \\
    v_t = \Delta v - v + u                                   & \text{in $\Omega \times (0, T)$}, \\
    \partial_\nu u = \partial_\nu v = 0                      & \text{on $\partial \Omega \times (0, T)$}, \\
    u(\cdot, 0) = u_0, v(\cdot, 0) = v_0                     & \text{in $\Omega$}
  \end{cases}
\end{align}
describing the evolution of cells (with density $u$) which are not only driven by random motion but are also attracted by a chemical signal (with density $v$) produced by themselves
(\cite{HillenPainterUserGuidePDE2009}).
When posed for instance in smooth, bounded domains $\Omega \subset \R^n$, $n \ge 2$, and when $\phi$ and $\psi$ are taken as the prototypical choices
\begin{align}\label{eq:intro:phi_psi_proto}
	\phi(s)=(s+1)^{m-1}\quad \text{and} \quad \psi(s)=s(s+1)^{q-1},
  \qquad \text{$s \ge 0$},
\end{align}
with $m, q \in \R$, the sign of $m - q -\frac{n-2}{n}$ turns out to be critical:
If $m-q>\frac{n-2}{n}$, solutions of \eqref{prob:proto} with reasonably smooth nonnegative initial data are always globally bounded
(\cite{TaoWinklerBoundednessQuasilinearParabolic2012}, \cite{IshidaEtAlBoundednessQuasilinearKeller2014}; see also \cite{HorstmannWinklerBoundednessVsBlowup2005}, \cite{SenbaSuzukiQuasilinearParabolicSystem2006}),
whereas if $m-q < \frac{n-2}{n}$ and $\Omega$ is a ball, unbounded solutions exist for initial data with arbitrarily small mass
(\cite{WinklerDoesVolumefillingEffect2009}, \cite{HorstmannWinklerBoundednessVsBlowup2005}).
(While smallness of $\intom u_0$ is hence not sufficient to guarantee global boundedness, smallness of initial data in certain spaces smaller than $\leb1$ is (\cite{ding2019sigma}).)
Moreover, these references entail similar dichotomies also for more general choices of $\phi$ and $\psi$,
which are then based on the growth rate of $\frac{\phi}{\psi}$, as long as $\phi$ decays at most algebraically in the limit of infinitely large population density.
(Faster decay rates are covered in, e.g., \cite{WinklerFamilyMasscriticalKellerSegel2022} and \cite{stinner2024critical}).

Unlike the minimal Keller--Segel model proposed in \cite{KellerSegelInitiationSlimeMold1970},
which is obtained upon taking $m = q = 1$ in \eqref{eq:intro:phi_psi_proto},
the quasilinear system \eqref{prob:proto} also accounts for so-called volume-filling effects,
i.e., the fact that the ability of a cell to move is hindered by nearby cells \cite{PainterHillenVolumefillingQuorumsensingModels2002}.
The most striking and well-known feature of said minimal Keller--Segel system is a critical mass phenomenon in the two-dimensional setting:
For initial data with small mass $\intom u_0$, the corresponding solution is globally bounded (\cite{NagaiEtAlApplicationTrudingerMoserInequality1997}),
while certain initial data with large mass lead to unbounded solutions (\cite{HorstmannWangBlowupChemotaxisModel2001}),
see also the survey \cite{BellomoEtAlMathematicalTheoryKeller2015}.

Investigating the conditions under which unbounded solutions fail to exist globally is interesting from both an application point of view and a mathematical perspective.
For the minimal Keller--Segel system, solutions blowing up in finite time have been constructed
both in the mass supercritical two-dimensional setting (\cite{MizoguchiWinklerBlowupTwodimensionalParabolic}, \cite{FuestLankeitCornersCollapseSimple2023})
and in the higher-dimensional case at arbitrary mass level (\cite{WinklerFinitetimeBlowupHigherdimensional2013}).
(For precedents regarding parabolic--elliptic simplifications, we refer to \cite{JagerLuckhausExplosionsSolutionsSystem1992}, \cite{NagaiBlowupRadiallySymmetric1995}, \cite{NagaiBlowupNonradialSolutions2001}.)
The question whether in the general supercritical case $m-q < \frac{n-2}{n}$ unbounded solutions of \eqref{prob:proto} may already blow up in finite time is less well understood.
While solutions are necessary global in time if $q \le 0$ and hence only infinite-time blow-up can happen in this setting (\cite{WinklerGlobalClassicalSolvability2019}), 
corresponding results for certain parabolic--elliptic counterparts of \eqref{prob:proto} (\cite{LankeitInfiniteTimeBlowup2020}, \cite{WinklerDjieBoundednessFinitetimeCollapse2010})
suggest that whenever $q > 0 $ (and $m - q < \frac{n-2}{n}$),
solutions blowing up in finite time should exist.
For the fully parabolic system, this conjecture has been partially answered under the additional assumption $\max\{m, q\} \ge 1$
(\cite{CieslakStinnerFinitetimeBlowupGlobalintime2012}, \cite{CieslakStinnerFiniteTimeBlowupSupercritical2014}, \cite{CieslakStinnerNewCriticalExponents2015}).
Moreover, \cite{LaurencotMizoguchiFiniteTimeBlowup2017} obtains finite-time blow-up for the critical case $m=2-\frac 2n$, $q=1$ (if $n \in \{3, 4\}$),
while \cite{CieslakLaurencotFiniteTimeBlowup2010} does so for $m \in [-1, 0)$, $q = 1$ and $n = 1$.
To the best of our knowledge, however, the possibility of finite-time blow-up for $\max\{m, q\} < 1$ has remained open for nearly a decade.

\paragraph{Main results.}
It is the purpose of the present paper to substantially extend these results:
We obtain solutions blowing up in finite time also for certain supercritical $m, q$ with $\max\{m, q\} < 1$.
\begin{theorem}\label{th:ftbu}
  Let $n \ge 2$, $R > 0$, $\Omega \defs B_R(0) \subset \R^n$,
  suppose that $m \in \R$ and $q > 0$ satisfy
  \begin{align}\label{eq:ftbu:cond_unbdd}
    m-q<\frac{n-2}{n}
  \end{align} 
  and
  \begin{align}\label{eq:ftbu:cond_main}
    (1 - \max\{m, q\}) \cdot \frac{n(n-1)}{(n(m-q) + 1)_+} < 2
  \end{align}
  (where $\xi_+ \defs \max\{\xi, 0\}$ for $\xi \in \R$ and with the convention that $a \cdot \frac{n(n-1)}{0} < 2$ for every $a \le 0$),
  and let $\phi$ and $\psi$ be as in \eqref{eq:intro:phi_psi_proto}.
  For every $M_u > 0$, there then exist nonnegative, radially symmetric initial data $u_0, v_0 \in \con\infty$ with $\intom u_0 = M_u$
  and a uniquely determined smooth classical solution $(u, v)$ of \eqref{prob:proto}
  which blows up at some finite time $\tmax$ in the sense that $\limsup_{t \nea \tmax} \|u(\cdot, t)\|_{\leb\infty} = \infty$.
\end{theorem}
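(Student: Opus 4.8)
The plan is to argue by contradiction: assume that for some initial mass $M_u > 0$, every radially symmetric, nonnegative, smooth solution emanating from initial data with $\intom u_0 = M_u$ is global in time, and then construct initial data leading to a contradiction. Following the now-standard approach of Winkler and its quasilinear refinements, I would first pass to the radial variables $r = |x|$ and introduce the accumulated-mass function $w(s, t) \defs \int_0^{s^{1/n}} \rho^{n-1} u(\rho, t) \diff \rho$ on $s \in [0, R^n]$, which converts the first equation of \eqref{prob:proto} into a scalar (degenerate, nonlinear) parabolic equation for $w$, with $v$ entering through its own accumulated mass. The goal is to track a weighted moment-type functional of the form $\phi(t) \defs \int_0^{s_0} s^{-\gamma}(s_0 - s)^{\mu} w(s, t) \diff s$ for suitable exponents $\gamma \in (0, 1)$ and $\mu > 1$ and a small radius $s_0 > 0$, show it satisfies a superlinear ODI $\phi'(t) \ge c \phi(t)^{1 + \delta} - C$ on a time interval, and thereby force blow-up once $\phi(0)$ is large enough, contradicting the assumed global existence.

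The key steps, in order, would be: (1) Local existence, uniqueness, and the extensibility criterion $\limsup_{t \nea \tmax} \|u(\cdot, t)\|_{\leb\infty} = \infty$ — invoking known results for quasilinear chemotaxis systems. (2) The ODI computation for $\phi$: differentiating under the integral, using the $w$-equation, integrating by parts, and splitting the cross-diffusion contribution into a "good" superlinear term and error terms; here the diffusion term contributes something controlled because $(u+1)^{m-1}$ degrades at worst algebraically, and the parameter condition $m - q < \frac{n-2}{n}$ is exactly what makes the cross-diffusion term dominate near $s = 0$. (3) Controlling the coupling to $v$: since the second equation is parabolic (not elliptic), one cannot simply substitute $v = (-\Delta + 1)^{-1} u$; instead I would follow the Cieślak–Stinner device of testing against a second functional built from $v_r$ (or from the accumulated mass of $v$), establishing a differential inequality for the pair, and closing the system. (4) The genuinely new ingredient advertised in the abstract — singular pointwise upper bounds $u(r, t) \le C r^{-\alpha}$ (equivalently $w(s,t) \le C s^{(n-\alpha)/n}$) valid a priori for all global solutions — which is what allows the error terms in step (2)–(3) to be absorbed under the weaker hypothesis $\max\{m,q\} < 1$; the precise admissible $\alpha$ is dictated by condition \eqref{eq:ftbu:cond_main}, with the factor $\frac{n(n-1)}{(n(m-q)+1)_+}$ reflecting the interplay between the spatial dimension, the degeneracy, and the strength of the singularity one can afford. (5) Choosing initial data: take $v_0$ compatible, and $u_0$ a smooth radial profile with the prescribed mass $M_u$ but concentrated enough near the origin that $\phi(0)$ exceeds the threshold from the ODI — possible because the constraint is only on $\intom u_0$, not on any stronger norm.

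The main obstacle is step (4) combined with closing the coupled differential inequality in step (3): in the fully parabolic setting the signal $v$ lags behind $u$, so the pointwise estimate on $u$ must be bootstrapped through the $v$-equation and then fed back, and one must ensure the singular weight $s^{-\gamma}$ is simultaneously (a) strong enough that the cross-diffusion term is superlinear, (b) mild enough that $\phi(t)$ stays finite given the pointwise bound on $w$, and (c) compatible with the error terms coming from $v_r$. Verifying that these three requirements are jointly satisfiable is precisely where the quantitative hypothesis \eqref{eq:ftbu:cond_main} enters, and checking this is expected to be the technical heart of the argument; the regime $n = 2$ versus $n = 3$ splits because the available pointwise exponent $\alpha$ and the critical weight $\gamma$ behave differently, which is why the theorem's two- and three-dimensional corollaries in the abstract take the particular forms $q < 2m$ and $q < 2m - \frac23$ or $m > \frac23$.
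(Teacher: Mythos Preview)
Your proposal takes a fundamentally different route from the paper, and the discrepancy is not merely stylistic: the approach you describe is the one used for \emph{parabolic--elliptic} Keller--Segel systems, and it has not been made to work in the fully parabolic setting.

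The paper does not use the accumulated-mass function $w$ or any moment functional $\phi(t) = \int_0^{s_0} s^{-\gamma}(s_0-s)^\mu w\,\mathrm{d}s$. Instead it exploits the Lyapunov structure: with the energy $\mathcal{F}(u,v)$ from \eqref{eq:def_F} and its dissipation rate $\mathcal{D}(u,v)$ from \eqref{eq:def_D}, one has $\frac{\mathrm d}{\mathrm dt}\mathcal{F}(u,v) = -\mathcal{D}(u,v)$ along solutions. The core analytic work (Section~\ref{sec:rel_f_d}) establishes $\mathcal{F}(u,v) \ge -C(\mathcal{D}^\gamma(u,v)+1)$ for some $\gamma\in(0,1)$, uniformly over a class $\mathcal S$ of pairs satisfying the pointwise bounds \eqref{eq:upper_u}--\eqref{eq:upper_v}. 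This turns $-\mathcal F$ into a solution of a superlinear ODI, forcing finite-time extinction once $\mathcal F(u_0,v_0)$ is sufficiently negative. The pointwise bound $u(r)\le A r^{-\alpha}$ enters precisely in Lemma~\ref{lem:u_g}, to control the term $\int_{B_{r_0}} |x|^{2-\delta_0} u(u+1)^{1-q}$ arising in the inner-region estimate of $\int |\nabla v|^2$; condition \eqref{eq:ftbu:cond_main} is exactly what allows one to choose $\alpha$ simultaneously above $\frac{n(n-1)}{(n(m-q)+1)_+}$ (so the pointwise estimate from the appendix applies) and below $\frac{2}{(1-\max\{m,q\})_+}$ (so the damping factor $|x|^{2-\delta_0}$ absorbs $(u+1)^{1-\max\{m,q\}}$).

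Your step (3) is where your plan would stall. You invoke a ``Cie\'slak--Stinner device of testing against a second functional built from $v_r$'', but this is a misattribution: those papers \cite{CieslakStinnerFinitetimeBlowupGlobalintime2012,CieslakStinnerFiniteTimeBlowupSupercritical2014,CieslakStinnerNewCriticalExponents2015} follow Winkler's energy method from \cite{WinklerFinitetimeBlowupHigherdimensional2013}, not a coupled moment-ODI scheme. In the fully parabolic problem, $v$ cannot be eliminated algebraically, and no closed differential inequality for a pair of moment functionals of $(w, \text{accumulated-}v)$ is known; the lag in $v$ is absorbed in the energy framework precisely because $\mathcal F$ is jointly defined in $(u,v)$ and already decreasing. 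Unless you can supply a concrete coupled ODI system and close it --- which would itself be a new contribution --- your outline does not yield a proof. The argument by contradiction (assume global existence) is also unnecessary: the paper derives the ODI on $(0,T_{\max})$ directly and concludes $T_{\max}<\infty$ from it.
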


\begin{remark}
  Actually, we can treat slightly more general functions $\phi$ and $\psi$.
  In the two-dimensional setting, one can in particular choose $\phi(s) = (s+1)^{m-1}$, $\psi(s) = s(s+1)^{q-1} \ln(s+\ure)$, $s \ge 0$ for $m=q>0$.
  We refer to Section~\ref{sec:phi_psi} for precise conditions.
\end{remark}

\begin{remark}
  The condition $q > 0$ in Theorem~\ref{th:ftbu} is necessary; if $q \le 0$, then all solutions are global (\cite{WinklerGlobalClassicalSolvability2019}).
  Likewise, at least up to equality, condition \eqref{eq:ftbu:cond_unbdd} is optimal as well; if $m - q > \frac{n-2}{n}$, then all solutions are not only global but also bounded (\cite{TaoWinklerBoundednessQuasilinearParabolic2012}).

  We note that \eqref{eq:ftbu:cond_main} is fulfilled whenever $\max\{m, q\} \ge 1$ and that we hence recover the main results from \cite{CieslakStinnerFinitetimeBlowupGlobalintime2012, CieslakStinnerFiniteTimeBlowupSupercritical2014, CieslakStinnerNewCriticalExponents2015}.
  The novelty of the present paper lies in the case $\max\{m, q\} < 1$.
  For $n \in \{2, 3\}$, Theorem~\ref{th:ftbu} in particular covers $m, q$ with
  \begin{align*}
    m - q < \frac{n-2}{n}
    \quad \text{and} \quad
    \begin{cases}
      q < 2m & \text{if } n = 2, \\
      q < 2m - \frac23 \text{ or } m > \frac23 & \text{if } n = 3.
    \end{cases}
  \end{align*}
  This is illustrated in Figure~\ref{fig:mq_graph} below.
\end{remark}

\vspace{-0.5em}
\begin{minipage}{\textwidth}
\begin{center}
\begin{minipage}{0.4\textwidth}
\begin{center}
\begin{tikzpicture}[scale=2]
\draw[->, very thick] (-1, 0) -- (1.4, 0) node[right]{$m$};
\draw[->, very thick] (0, -1) node[below=1em]{\large $n=2$} -- (0, 1.4) node[above]{$q$};
\draw[thick, fill=gray] (0, 0) -- (1, 1) -- (0.5, 1) -- cycle;
\draw[thick] (-0.05, 1) node[left]{$1$} -- (1, 1);
\draw[thick] (-0.9, 1) -- (-0.26, 1);
\draw[thick] (1, -0.05) node[below]{$1\vphantom{\frac11}$} -- (1, 0.05);
\draw[thick] (0.5, -0.05) node[below]{$\frac12$} -- (0.5, 0.05);
\draw[thick] (-0.9, -0.9) -- (1.2, 1.2);
\node[] at (0.6, -0.6) {GB};
\node[] at (0.6, 0.2) {GB};
\node[] at (-0.3, -0.6) {GB};
\node[] at (-0.55, -0.15) {IFTBU};
\node[] at (0.5, 1.2) {FTBU};
\node[] at (-0.5, 1.2) {FTBU};
\node[] at (0.2, 0.7) {?};
\node[] at (-0.5, 0.7) {?};
\end{tikzpicture}
\end{center}
\end{minipage}
\begin{minipage}{0.4\textwidth}
\begin{center}
\begin{tikzpicture}[scale=2]
\draw[->, very thick] (-1, 0) -- (1.4, 0) node[right]{$m$};
\draw[very thick] (0, -1) node[below=1em]{\large $n=3$} -- (0, -1/3);
\draw[->, very thick] (0, 0) -- (0, 1.4) node[above]{$q$};
\draw[thick, fill=gray] (1/3, 0) -- (1, 2/3) -- (1, 1) -- (2/3, 1) -- (2/3, 2/3) -- cycle;
\draw[thick] (-0.05, 1) node[left]{$1$} -- (1, 1);
\draw[thick] (-0.9, 1) -- (-0.26, 1);
\draw[thick] (-0.05, 2/3) node[left]{$\frac23$} -- (0.05, 2/3);
\draw[thick] (1, -0.05) node[below]{$1\vphantom{\frac11}$} -- (1, 0.05);
\draw[thick] (1/3, -0.05) node[below]{$\frac13$} -- (1/3, 0.05);
\draw[thick] (2/3, -0.05) node[below]{$\frac23$} -- (2/3, 0.05);
\draw[thick] (-0.9+1/3, -0.9) -- (1.2+1/3, 1.2);
\node[] at (5/6, -0.6) {GB};
\node[] at (5/6, 0.2) {GB};
\node[] at (-0.23, -0.5-1/3) {GB};
\node[] at (-0.55+1/3, -0.15) {IFTBU};
\node[] at (0.5, 1.2) {FTBU};
\node[] at (-0.5, 1.2) {FTBU};
\node[] at (0.3, 0.6) {?};
\node[] at (-0.5, 0.6) {?};
\end{tikzpicture}
\end{center}
\end{minipage}
\end{center}
\vspace{-1em}
\captionsetup{type=figure}
\captionof{figure}{Properties of \eqref{prob:proto} (with $\phi, \psi$ as in \eqref{eq:intro:phi_psi_proto}) for $n \in \{2, 3\}$ and different values of $m$ and $q$. Legend:\\
“GB”: All solutions exist globally and are bounded. \\
“IFTBU”: All solutions exist globally and some of them blow up in infinite time.\\
“FTBU”: There exist solutions blowing up in finite time. \\
“?”: There exist unbounded solutions but it seems to be open whether finite-time blow-up is possible.\\
shaded regions: added to the FTBU regime due to Theorem~\ref{th:ftbu}.}
\label{fig:mq_graph}
\end{minipage}

\paragraph{Blow-up induced by large negative energy.}
As discussed for instance in the survey \cite{LankeitWinklerFacingLowRegularity2019},
blow-up in fully parabolic chemotaxis systems is usually detected by exploiting the energy structure, i.e., the fact that
\begin{align}\label{eq:intro:F}
  \mc F(u, v) \defs \frac12 \intom |\nabla v|^2 + \frac12 \intom v^2 - \intom uv + \intom G(u),
  \quad \text{where} \quad G(u) \defs \int_1^u \int_1^\sigma \frac{\phi(\tau)}{\psi(\tau)} \dtau \dsigma,
\end{align}
decreases along trajectories of \eqref{prob:proto}.
Due to global temporal integrability of the dissipative term 
\begin{align*}
  \mathcal{D}(u,v) &\defs \into |\Delta v - v + u|^2 + \into \left| \frac{\phi(u)}{\sqrt{\psi(u)}} \nabla u - \sqrt{\psi(u)} \nabla v \right|^2,
\end{align*}
the $\omega$-limit sets of global bounded solutions each must contain a steady state of \eqref{prob:proto}.
However, in the supercritical case there is a minimal energy (depending on $\intom u$) below which no steady states exist,
meaning that solutions emanating from initial data with energy below this threshold must be unbounded (\cite{HorstmannWangBlowupChemotaxisModel2001}, \cite{WinklerDoesVolumefillingEffect2009}).
While this approach evidently does not provide any information on the finiteness of the blow-up time,
a finer analysis of the energy structure,
first performed in \cite{WinklerFinitetimeBlowupHigherdimensional2013} for $m = q = 1$
and then extended to quasilinear systems in \cite{CieslakStinnerFinitetimeBlowupGlobalintime2012, CieslakStinnerFiniteTimeBlowupSupercritical2014, CieslakStinnerNewCriticalExponents2015} and \cite{LaurencotMizoguchiFiniteTimeBlowup2017},
does: If $\F(u,v)\ge -C \left(\D^{\gamma}(u,v)+1\right)$ for some $C > 0$ and $\gamma \in (0, 1)$, then $-\mc F(u, v)$ solves a certain autonomous superlinear ODI,
so that solutions emanating from initial data with sufficiently large negative energy necessarily cease to exist after finite time.

\paragraph{Key new idea: Utilize pointwise upper estimates.}
In order to obtain such a relationship between $\mc F(u, v)$ and $\mc D(u, v)$, one splits $\Omega$ in an inner region $B_{r_0}$ and an outer region $\Omega \setminus B_{r_0}$.
For the analysis on the former, it turns out that one needs to (inter alia) control
\begin{align}\label{eq:intro:bad_term}
  \intb |x|^{2-\delta_0} u (u+1)^{1-q}
\end{align}
for an arbitrary $\delta_0 > 0$ by $\eta \into G(u) + C$ for sufficiently small $\eta > 0$ and arbitrary $C >0$.
If $q \ge 1$, this can be done by making use of conservation of mass, i.e., the fact that $\intom u$ is constant in time.
Likewise, for ($q < 1$ and) $m \ge 1$, \eqref{eq:intro:bad_term} can be estimated by $r_0^{2-\delta_0} \intom u (u+1)^{m-q}$
and hence by $\eta \intom G(u)$ if $r_0$ is sufficiently small.

This covers the case $\max\{m, q\} \ge 1$ already treated in \cite{CieslakStinnerFinitetimeBlowupGlobalintime2012, CieslakStinnerFiniteTimeBlowupSupercritical2014, CieslakStinnerNewCriticalExponents2015}.
For $\max\{m, q\} < 1$, we not only rely on mass conservation and the last summand of the functional $\mc F$ defined in \eqref{eq:intro:F},
but crucially also on the pointwise upper estimates of the form
\begin{align}\label{eq:intro:pw}
  u(r, t) \le C_\alpha r^{-\alpha}, \quad \alpha > \frac{n(n-1)}{n(m-q) + 1},
\end{align}
derived in \cite{FuestBlowupProfilesQuasilinear2020} (and earlier in \cite{WinklerBlowupProfilesLife2020} for the special case $m=q=1$).
That is, we do not estimate $|x|^{2-\delta_0}$ by $r_0^{2-\delta_0}$ but instead make full use of the dampening factor
and can hence control \eqref{eq:intro:bad_term} if \eqref{eq:ftbu:cond_main} holds, see Lemma~\ref{lem:u_g} below.

This approach also explains why the set of parameters $(m, q)$ known to admit solutions blowing up in finite time (cf.\ Figure~\ref{fig:mq_graph})
contains some points $(m, q')$, $(m, q'')$ with $q' < q''$ but not the line segment between them.
While larger $q$ should intuitively enhance the destabilizing effect of attractive chemotaxis
and hence finite-time blow-up should be possible for $(m, q)$ whenever it is for $(m, q')$ for some $q' < q$,
enlarging $q$ also means that the pointwise upper estimates provided by \cite{FuestBlowupProfilesQuasilinear2020},
on which our argument relies on, get substantially weaker.

To the best of our knowledge, Theorem~\ref{th:ftbu} is the first finite-time blow-up result for a \emph{fully parabolic} chemotaxis system
crucially making use of such upper pointwise estimates.
However, let us briefly point out that such an idea has been successfully employed for certain parabolic--elliptic relatives of \eqref{prob:proto},
where the second equation in \eqref{prob:proto} is either replaced by $0 = \Delta v - v + u$ 
or by $0 = \Delta v - \frac{1}{|\Omega|} \int_\Omega u + u$
(see, e.g., \cite{WinklerFinitetimeBlowupLowdimensional2018}, \cite{BlackEtAlRelaxedParameterConditions2021}, \cite{MizukamiEtAlCanChemotacticEffects2022}
and \cite{WinklerCriticalBlowupExponent2018}, \cite{FuestFinitetimeBlowupTwodimensional2020}, \cite{BlackEtAlRelaxedParameterConditions2021}, respectively).
While one obtains the same pointwise estimates for $u$ as for the fully parabolic system in the former case (also by relying on \cite{WinklerBlowupProfilesLife2020} or \cite{FuestBlowupProfilesQuasilinear2020}),
in the latter case, $u$ is radially decreasing whenever $u_0$ is (cf.\ \cite[Lemma~2.2]{WinklerCriticalBlowupExponent2018}),
which due to boundedness of mass implies the stronger bound $u(r, t) \le C r^{-n}$.
Furthermore, as observed and utilized first in \cite{FuestApproachingOptimalityBlowup2021} (and then in \cite{MarrasEtAlBehaviorTimeSolutions2023} and \cite{DuLiuBlowupSolutionsChemotaxis2023}, for instance),
in the latter situation even the slightly stronger bound  $u(r, t) \le r^{-n} \omega_n^{-1} n \int_{B_r(0)} u(\rho, t) \drho$ holds.

Moreover, the original main intention of estimates such as \eqref{eq:intro:pw}
has been to prove existence and properties of so-called blow-up profile functions (\cite{WinklerBlowupProfilesLife2020}, \cite{FuestBlowupProfilesQuasilinear2020}, \cite{fuest2022optimality}).
For a parabolic--elliptic counterpart of \eqref{prob:proto} and when $m = q = 1$, \cite{souplet2019blow} provides corresponding pointwise lower estimates as well, showing that the blow-up profile behaves like $|x|^{-2}$.
This in particular rules out a collapse into a Dirac-type singularity (as observed in 2D, see \cite{nagai2000chemotactic}, \cite{HerreroVelazquezBlowupMechanismChemotaxis1997}) in the higher-dimensional setting.

\paragraph{Plan of the paper.}
After stating concrete conditions on $\phi$ and $\psi$ in Section~\ref{sec:phi_psi},
a large part of our analysis focuses on deriving the estimate $\F(u,v) \ge -C (\D^{\gamma}(u,v)+1)$ for a certain class of functions $u, v$ in Section~\ref{sec:rel_f_d}.
In Section~\ref{sec:ftbu_init}, we then construct a sequence of initial data whose energy diverges to $-\infty$ but whose corresponding solutions all belong to a class studied in Section~\ref{sec:rel_f_d}.
To that end, we also revisit the proofs of \cite{WinklerBlowupProfilesLife2020} and \cite{FuestBlowupProfilesQuasilinear2020} in Appendix~\ref{sec:pw}
in order to ensure that the constant $C$ in the pointwise estimate in \eqref{eq:intro:pw} depends in a manageable way on the initial data.
Finally, we conclude Theorem~\ref{th:ftbu} in Section~\ref{sec:ftbu_final}.

\section{The functions \tops{$\phi$}{phi} and \tops{$\psi$}{psi}}\label{sec:phi_psi}
In this section, we present more general conditions on $\phi$ and $\psi$ which result in finite-time blow-up for appropriate initial data.
In addition to the inclusions $\phi, \psi \in C^2([0, \infty))$, we assume that 
\begin{align}\label{eq:cond:phi_psi_pos}
  \phi(s) > 0 
  \quad \text{and} \quad
  \frac{\psi(s)}{s} > 0
  \qquad \text{for all } s \ge 0,
\end{align}
and that there are $K_{\phi, 1}, K_{\phi, 2}, K_{\psi, 1}, K_{\psi, 2} > 0$,  such that
\begin{align}\label{eq:cond:phi_psi_mq}
 K_{\phi, 1} (s+1)^{m-1}\le 
  \phi(s) \le K_{\phi, 2} (s+1)^{m-1} \quad \text{and} \quad
 K_{\psi,1} s (s+1)^{q_1-1} \le \psi(s) \le K_{\psi,2} s(s+1)^{q_2-1}
\end{align}
for all $s \ge 0$,
with some $m \in \R$, $0 < q_1 \le q_2$ satisfying 
\begin{align}\label{eq:cond:m_q1q2_main}
  (\onemaxmq) \frac{n(n-1)}{(n(m-q_2)+1)_+} <2,
  \quad \text{where $\maxmq \defs \max\{q_1, m + q_1 - q_2\}$}.
\end{align}

Further conditions on $\phi$ and $\psi$ are expressed through the functions
\begin{alignat}{2}\label{eq:cond:G_def}
  G &\colon [0, \infty) \to [0, \infty), &&\quad s \mapsto \int_{s_0}^s \int_{s_0}^\sigma \frac{\phi(\tau)}{\psi(\tau)} \dtau \dsigma \quad \text{and} \\
  \label{eq:cond:H_def}
  H &\colon [0, \infty) \to \R, &&\quad s \mapsto \int_{s_0}^s \frac{\sigma\phi(\sigma)}{\psi(\sigma)} \dsigma,
\end{alignat}
where $s_0 > 1$ is fixed.
(Both $G(0)$ and $H(0)$ are, indeed, finite thanks to \eqref{eq:cond:phi_psi_mq}.)
If $n=2$, we assume that 
\begin{align}\label{eq:cond:G}
  \exists a > 0\; \exists \theta \in (0, 1)\; \forall s \ge s_0:
  G(s) &\le a s \ln^\theta s \quad \text{and} \\
\label{eq:cond:H}
  \exists b > 0 \; \forall s \ge s_0:
  H(s) &\le \frac{bs}{\ln s},
\intertext{while for $n \ge 3$ we require that}\label{eq:cond:G_higher}
	\exists a > 0\; \exists \theta \in \left(\frac2n, 1\right) \forall s \ge s_0:
  G(s) &\le a s^{2-\theta} \quad \text{and} \\
\label{eq:cond:H_higher}
  \exists \vt \in (0, 1), K > 0 \; \forall s \ge s_0:
  H(s) &\le \frac{n-2-\vt }{n} \cdot G(s)+K(s+1).
\end{align}
We note that a direct calculation based on the lower bound for $\phi$ and the upper bound for $\psi$ in \eqref{eq:cond:phi_psi_mq}
as well as \eqref{eq:cond:G} (if $n=2$) or \eqref{eq:cond:G_higher} (if $n\ge3$) shows
\begin{align}\label{eq:cond:m_q2}
	m-q_2 < \frac{n-2}{n}.
\end{align}
On the other hand, the following (well-known) examples inter alia show that whenever $m \in \R$ and $q = q_1 = q_2 > 0$ fulfill \eqref{eq:cond:m_q1q2_main} and \eqref{eq:cond:m_q2},
then indeed functions $\phi$, $\psi$ with the above properties exist.
\begin{lemma}\label{lem:phi_psi_admissible}
  Let $n \ge 2$, $R > 0$, $\Omega \defs B_R(0) \subset \R^n$.
  For
	\begin{enumerate}
    \item 
      $\phi(s) = (s+1)^{m-1}$, $\psi(s) = s(s+1)^{q-1}$, $s \ge 0$, with $m \in \R$, $q > 0$ satisfying \eqref{eq:ftbu:cond_unbdd} and \eqref{eq:ftbu:cond_main},

    \item
      $n = 2$ and $\phi(s) = (s+1)^{m-1}$, $\psi(s) = s(s+1)^{q-1} \ln(s+\ure)$, $s \ge 0$, with $m = q > 0$,
  \end{enumerate}
  we can find $K_{\phi, i}, K_{\psi, i} > 0$, $0 < q_1 \le q_2$ satisfying \eqref{eq:cond:phi_psi_mq} and \eqref{eq:cond:m_q1q2_main}
  as well as \eqref{eq:cond:G} and \eqref{eq:cond:H} if $n = 2$ and \eqref{eq:cond:G_higher} and \eqref{eq:cond:H_higher} if $n \ge 3$.
\end{lemma}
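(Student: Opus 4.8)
The plan is to treat the two families in Lemma~\ref{lem:phi_psi_admissible} separately; in each case I would fix explicit values of $q_1$ and $q_2$ and then verify the conditions of Section~\ref{sec:phi_psi} by straightforward calculus, starting from the closed form of $\frac{\phi}{\psi}$ and hence of $G$ and $H$.

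For~(i), the natural choice is $q_1 = q_2 = q$. Then $\maxmq = \max\{m, q\}$, so \eqref{eq:cond:m_q1q2_main} and \eqref{eq:cond:m_q2} become, respectively, exactly the hypotheses \eqref{eq:ftbu:cond_main} and \eqref{eq:ftbu:cond_unbdd}, while \eqref{eq:cond:phi_psi_pos} and \eqref{eq:cond:phi_psi_mq} hold with all four constants equal to $1$ (and $\phi, \psi \in C^2([0,\infty))$ is clear). It then remains to supply the growth bounds for $G$ and $H$. Here I would use that $\frac{\phi(\tau)}{\psi(\tau)} = \frac{(\tau+1)^{m-q}}{\tau}$ is comparable to $\tau^{m-q-1}$ on $[1,\infty)$, together with $H(s) = \int_{s_0}^s (\sigma+1)^{m-q}\dsigma$. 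Integrating, one obtains $G(s) = O\big(s^{(m-q)_+ + 1}\big)$ — with an additional factor $\ln s$ precisely when $m = q$ — and $\lim_{s\to\infty}\frac{H(s)}{G(s)} = (m-q)_+$, while $H$ is at most linear whenever $m \le q$. Since \eqref{eq:ftbu:cond_unbdd} forces $m - q < \frac{n-2}{n}$, this leaves exactly the room required: for $n = 2$ one has $m - q < 0$, so $G(s) = O(s)$ gives \eqref{eq:cond:G} with any $\theta \in (0,1)$ and $H(s) = o(s/\ln s)$ gives \eqref{eq:cond:H}; for $n \ge 3$ one picks $\theta \in \big(\frac2n,\, 1 - (m-q)_+\big)$ (nonempty by \eqref{eq:ftbu:cond_unbdd}, and with the logarithmic factor for $m=q$ absorbed since $2-\theta>1$) to get \eqref{eq:cond:G_higher}, and — since $(m-q)_+ < \frac{n-2}{n}$ — a sufficiently small $\vt \in (0,1)$ so that $H(s) \le \frac{n-2-\vt}{n} G(s)$ holds for all large $s$, the remaining bounded or linear contribution being absorbed into $K(s+1)$ to yield \eqref{eq:cond:H_higher}.

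For~(ii), where $n = 2$, $m = q > 0$ and $\psi(s) = s(s+1)^{q-1}\ln(s+\ure)$, I would take $q_1 = q$ and any $q_2 \in \big(q, \tfrac32 q\big)$, for instance $q_2 = \tfrac54 q$. The lower bound in \eqref{eq:cond:phi_psi_mq} then holds with $K_{\psi,1} = 1$ because $\ln(s+\ure) \ge 1$, and the upper bound holds because $\ln(s+\ure)/(s+1)^{q_2-q}$ is bounded on $[0,\infty)$; moreover $\max\{q_1, m+q_1-q_2\} = q$, and a short case distinction (according to the sign of $2(m - q_2)+1$, invoking the stated convention when this is $\le 0$) confirms \eqref{eq:cond:m_q1q2_main}. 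The substantive point is then the growth of $G$ and $H$ attached to $\frac{\phi(\tau)}{\psi(\tau)} = \frac{1}{\tau\ln(\tau+\ure)}$, i.e.\ of $H(s) = \int_{s_0}^s \frac{\dsigma}{\ln(\sigma+\ure)}$ and $G(s) = \int_{s_0}^s \int_{s_0}^\sigma \frac{\dtau}{\tau\ln(\tau+\ure)} \dsigma$. From $\ln(\tau+\ure) \ge \ln\tau$ the inner integral of $G$ is $O(\ln\ln s)$, hence $G(s) = O(s\ln\ln s) = o(s\ln^\theta s)$ for every $\theta \in (0,1)$, which is \eqref{eq:cond:G}; and splitting $\int_{s_0}^s \frac{\dsigma}{\ln(\sigma+\ure)}$ at $\sqrt s$ and using $\ln(\sigma+\ure) \ge \tfrac12\ln s$ on $[\sqrt s, s]$ gives $H(s) = O\big(\tfrac{s}{\ln s}\big)$, which is \eqref{eq:cond:H}.

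I expect the one genuine obstacle to be this last estimate in~(ii): the naive bound $H(s) \le \frac{s - s_0}{\ln s_0}$ is only linear in $s$ and far too weak, so one must genuinely exploit the logarithmic-integral asymptotics $\int^s \frac{\dsigma}{\ln\sigma} \sim \frac{s}{\ln s}$, which is precisely what the decomposition at $\sqrt s$ provides. Everything else — checking the algebraic constraints on $q_1, q_2$ and, in~(i), the power-type growth of $G$ and $H$ — is routine, and needs no analytic input beyond the two hypotheses \eqref{eq:ftbu:cond_unbdd} and \eqref{eq:ftbu:cond_main} themselves.
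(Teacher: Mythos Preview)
Your proposal is correct and follows essentially the same route as the paper: in both cases you make the same choices $q_1=q_2=q$ (resp.\ $q_1=q$, $q_2\in(q,\tfrac32q)$) and verify the conditions by direct calculus on $G$ and $H$. The one substantive difference is your treatment of $H$ in case~(ii): the paper bounds $\int_{s_0}^s \frac{d\sigma}{\ln(\sigma+\ure)}$ by $\frac{s-s_0}{\ln(s+\ure)}$ in a single step, whereas your split at $\sqrt s$ reaches the same $O(s/\ln s)$ conclusion via logarithmic-integral asymptotics. Your argument is in fact the more careful one here, since the integrand $\sigma\mapsto 1/\ln(\sigma+\ure)$ is decreasing and hence $(s-s_0)/\ln(s+\ure)$ is a \emph{lower} bound for the integral, not an upper bound --- so the paper's one-line estimate is erroneous as written (though the claim \eqref{eq:cond:H} it aims at is true, and your decomposition proves it).
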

\begin{proof}
  \begin{enumerate}
    \item
      Setting $q_1 \defs q_2 \defs q > 0$, we see that \eqref{eq:cond:phi_psi_mq} holds with $K_{\phi, i} = K_{\psi, i} = 1$,
      and that \eqref{eq:cond:m_q1q2_main} is equivalent to \eqref{eq:ftbu:cond_main}.
      Since $\frac{\psi(s)}{\phi(s)} = s(s+1)^{q-m}$ can be estimated from below by $c_1 s \ln s$ if $n = 2$ (since $q > m$) or by $c_1 s^\alpha$ with $\alpha = 1+q-m > \frac{2}{n}$ if $n \ge 3$
      for all $s \ge s_0$ and some $s_0 > 1$ and $c_1 > 0$,
      the proof of \cite[Corollary~5.2~(i) and (iii)]{WinklerDoesVolumefillingEffect2009} yields \eqref{eq:cond:G}--\eqref{eq:cond:H_higher}.

    \item
      We let $q_1 \defs q$ and fix $q_2\in(q,\frac32 q)$.
      Then $\ln(s+\ure) \le K_{\psi, 2} (s+1)^{q_2-q}$ for all $s \ge 0$ and some $K_{\psi, 2} > 0$
      and \eqref{eq:cond:phi_psi_mq} holds with $K_{\phi, i} = K_{\psi, 1} = 1$.
      If $q < 1$, $(1-\max\{q, q+q-q_2\})\frac{2}{2(q-q_2)+1} < (1-q) \frac{2}{-q+1} = 2$, and hence \eqref{eq:cond:m_q1q2_main} holds for all choices of $q$.
      As to \eqref{eq:cond:G} and \eqref{eq:cond:H},
      we again make use of the criterion in \cite[Corollary~5.2~(i)]{WinklerDoesVolumefillingEffect2009},
      which is applicable since $\frac{\psi(s)}{\phi(s)} = s\ln(s + \ure) \ge s \ln s$ for all $s > 0$.
     \qedhere
  \end{enumerate}
\end{proof}

Under these assumptions, there exist solutions of \eqref{prob:proto} blowing up in finite time.
That is, in the following sections we shall prove the following generalization of Theorem~\ref{th:ftbu}.
\begin{theorem}\label{th:ftbu:q1q2}
  Let $n \ge 2$, $R > 0$, $\Omega \defs B_R(0) \subset \R^n$,
  $\phi, \psi \in C^2([0, \infty)$ be such that \eqref{eq:cond:phi_psi_pos} holds,
  $K_{\phi, i}, K_{\psi, i} > 0,$, $m \in \R$, $0 < q_1 \le q_2$ satisfy \eqref{eq:cond:phi_psi_mq}--\eqref{eq:cond:m_q1q2_main}
  and suppose that \eqref{eq:cond:G} and \eqref{eq:cond:H} hold if $n = 2$ and that \eqref{eq:cond:G_higher} and \eqref{eq:cond:H_higher} hold if $n \ge 3$.
  For each $M_u > 0$, there then exist nonnegative, radially symmetric initial data $u_0, v_0 \in \con\infty$ with $\intom u = M_u$
  and a uniquely determined classical solution $(u, v)$ of \eqref{prob:proto}
  which blows up at some finite time $\tmax$ in the sense that $\limsup_{t \nea \tmax} \|u(\cdot, t)\|_{\leb\infty} = \infty$.
\end{theorem}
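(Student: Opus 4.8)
The plan is to run the energy method for fully parabolic chemotaxis systems, with the pointwise estimates of \cite{FuestBlowupProfilesQuasilinear2020} (and, for $m=q=1$, \cite{WinklerBlowupProfilesLife2020}) as the crucial new ingredient. Fix $M_u > 0$. Classical local existence theory provides, for every nonnegative radial $u_0, v_0 \in \con\infty$, a unique maximal classical solution $(u, v)$ of \eqref{prob:proto} on $\Ombar \times [0, \tmax)$ which is radially symmetric in space, obeys the mass identity $\intom u(\cdot, t) = \intom u_0$ and the energy inequality $\ddt \F(u(\cdot, t), v(\cdot, t)) \le - \D(u(\cdot, t), v(\cdot, t))$ on $[0, \tmax)$ (with $\F$ as in \eqref{eq:intro:F} and $\D$ the associated dissipation rate), and for which $\tmax < \infty$ already forces $\limsup_{t \nea \tmax} \norm[\leb\infty]{u(\cdot, t)} = \infty$. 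It therefore suffices to produce, for every prescribed $\Lambda > 0$, admissible initial data with $\intom u_0 = M_u$ whose energy lies below $-\Lambda$ while the emanating solution remains, on its whole existence interval, in a ``good'' class on which the energy is bounded below by a sublinear power of the dissipation.

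The analytic core (Section~\ref{sec:rel_f_d}) is a superlinear absorption inequality: there exist $C > 0$ and $\gamma \in (0, 1)$ with $\F(u, v) \ge - C\bigl(\D^\gamma(u, v) + 1\bigr)$ for all nonnegative radial $(u, v)$ in a class $\mc S$ encoding $\intom u = M_u$ together with a pointwise bound $u(r) \le C_\alpha r^{-\alpha}$ for one fixed $\alpha$ slightly above $\frac{n(n-1)}{n(m - q_2) + 1}$ --- a value whose admissibility for the subsequent estimates is guaranteed by \eqref{eq:cond:m_q1q2_main}. To establish it I would split $\Omega$ into $B_{r_0}$ and $\Omega \setminus B_{r_0}$ for a small fixed $r_0 > 0$. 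On the outer annulus the pointwise bound renders $u$ bounded, so its contributions to $\intom uv$ and $\intom G(u)$ are harmless. On the inner ball, a testing computation exploiting mass conservation, the two constituents $\intom|\Delta v - v + u|^2$ and $\intom \psi(u)|\nabla G'(u) - \nabla v|^2$ of $\D$, and the growth hypotheses \eqref{eq:cond:G}--\eqref{eq:cond:H} if $n = 2$ resp.\ \eqref{eq:cond:G_higher}--\eqref{eq:cond:H_higher} if $n \ge 3$ reduces everything to dominating a term of the type \eqref{eq:intro:bad_term}, i.e.\ $\intb |x|^{2-\delta_0} u(u+1)^{1-q}$ (with the exponent adapted to $q_1, q_2$), by $\eta\intom G(u) + C$ for arbitrarily small $\eta > 0$. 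This is Lemma~\ref{lem:u_g}: instead of bounding $|x|^{2-\delta_0}$ by $r_0^{2-\delta_0}$, one retains the singular weight, inserts $u(r) \le C_\alpha r^{-\alpha}$, and closes the estimate by Hölder's inequality and interpolation, which works precisely under \eqref{eq:cond:m_q1q2_main}.

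Granted this inequality on $\mc S$, the blow-up mechanism is routine: as long as the solution stays in $\mc S$, the quantity $y(t) \defs - \F(u(\cdot, t), v(\cdot, t))$ satisfies $y'(t) \ge \D(u(\cdot, t), v(\cdot, t)) \ge \bigl(\tfrac1C y(t) - 1\bigr)_+^{1/\gamma}$, an autonomous superlinear ODI, so $y$ --- hence the solution --- cannot exist up to $t = \infty$ once $y(0)$ exceeds an explicit threshold, whence $\tmax < \infty$. It thus remains (Section~\ref{sec:ftbu_init}) to construct, for each $\Lambda > 0$, nonnegative radial $u_0, v_0 \in \con\infty$ with $\intom u_0 = M_u$, $\F(u_0, v_0) \le -\Lambda$, and whose solution lies in $\mc S$ throughout $[0, \tmax)$. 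For the first two requirements I would take $u_0$ a mass-$M_u$ bump concentrated in a shrinking ball $B_\rho$ (with $\rho \da 0$) together with a compatible $v_0$, and verify that $- \intom u_0 v_0 \to -\infty$ outpaces the growth of $\tfrac12 \intom |\nabla v_0|^2 + \tfrac12 \intom v_0^2 + \intom G(u_0)$, the last summand being kept under control via \eqref{eq:cond:G} / \eqref{eq:cond:G_higher}. Membership in $\mc S$ is where a \emph{quantitative} form of \eqref{eq:intro:pw} enters: in Appendix~\ref{sec:pw} I would revisit the proofs of \cite{WinklerBlowupProfilesLife2020} and \cite{FuestBlowupProfilesQuasilinear2020} while tracking that $C_\alpha$ depends on the initial data only through quantities (essentially $M_u$ and fixed norms of $v_0$) that remain bounded along the construction, so that $\mc S$, and with it the constant $C$ above, can be fixed before $\Lambda$. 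Assembling these pieces in Section~\ref{sec:ftbu_final} then yields the theorem.

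The step I expect to be the genuine obstacle is the inner-ball estimate of \eqref{eq:intro:bad_term}: calibrating the Hölder exponents so that the singular weight $|x|^{2-\delta_0}$ and the decay $r^{-\alpha}$ jointly dominate the superlinearity of $s \mapsto s(s+1)^{1-q}$ relative to $G$, uniformly as $\delta_0 \da 0$ and for $\alpha$ arbitrarily close to the threshold, is exactly what forces condition \eqref{eq:cond:m_q1q2_main} --- and, as noted in the introduction, this is also why the admissible parameter region is not convex. A secondary, more technical hurdle is the bookkeeping in Appendix~\ref{sec:pw} needed to make the pointwise-estimate constant sufficiently explicit that $\mc S$ is preserved along the low-energy sequence.
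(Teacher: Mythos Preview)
Your plan matches the paper's approach closely—energy method, the key inequality $\F \ge -C(\D^\gamma + 1)$ on a class $\mc S$, inner/outer decomposition, Lemma~\ref{lem:u_g} as the new ingredient via the pointwise estimate on $u$, then an ODI argument and the low-energy family of initial data. Two points in your description are inaccurate enough to cause trouble if taken literally, though.

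First, $r_0$ is \emph{not} a small fixed number. With $r_0$ fixed, the inner estimate (Lemma~\ref{lem:F_inner}) gives terms of the form $C r_0^{\delta_0}\D(u,v)$ and you would only obtain $\F \ge -C(\D + 1)$, i.e.\ $\gamma = 1$; the resulting ODI $y' \ge cy - C$ does not blow up in finite time. The sublinear exponent $\gamma < 1$ comes precisely from choosing $r_0$ as a negative power of $\D(u,v)$ and balancing the $r_0^{\delta_0}\D$ term against the $r_0^{n-2-2\kappa}$ term from the boundary and outer contributions (see the proof of Lemma~\ref{lm:F_ge_C_theta}).

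Second, your description of $\mc S$ omits a pointwise bound on $v$. The paper's $\mc S$ (see \eqref{eq:def_S}) also carries $v(r) \le B r^{-\kappa}$ with some $\kappa > n-2$, which is what controls the boundary terms $r_0^{n-2} v^2(r_0)$ arising in the inner computation (Lemma~\ref{lem:nabla_c}, Lemma~\ref{lem:F_inner}) and the outer term $\int_{\Omega\setminus B_{r_0}} uv$ (Lemma~\ref{lem:F_outer}). Your suggestion to use the $u$-bound on the outer annulus instead is not wrong in principle, but you still need some control of $v(r_0)$ for the interface terms, and this is supplied by Lemma~\ref{lm:heat_inhom_v_pw}—which in turn is why the appendix tracks the dependence of the pointwise estimates on $\|v_0\|_{W^{1,p}}$ rather than $\|v_0\|_{W^{1,\infty}}$.
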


\section{Relating \tops{$\mc F(u, v)$}{F(u, v)} and \tops{$\mc D(u, v)$}{D(u, v)}}\label{sec:rel_f_d}
In this section, we prove a relation between the energy functional $\mc F$ and its dissipation rate $\mc D$
which implies that $-\mc F(u, v)$ solves a superlinear ODE when $(u, v)$ solves \eqref{prob:proto}.
In order to eventually conclude from this that $\mc F(u, v)$, and hence $(u, v)$, cannot exist globally for certain initial data,
we need to carefully track the dependence of the constants below.
To that end, the present section is only concerned with functions $(u, v)$ belonging to a set $\mc S$ defined in \eqref{eq:def_S} below.
In Section~\ref{sec:ftbu_init}, we will then construct a family of suitable initial data whose corresponding solutions of \eqref{prob:proto} belong to $\mc S$ throughout evolution.

More precisely, we first let
\begin{align}\label{eq:ass_fd}
  \begin{cases}
    n \ge 2,\, R > 0,\, \Omega \defs B_R(0) \subset \R^n, \\
    \phi, \psi \in C^2([0, \infty) \text{ be such that \eqref{eq:cond:phi_psi_pos} holds}, \\
    K_{\phi, i}, K_{\psi, i} > 0,\, m \in \R,\, 0 < q_1 \le q_2 \text{ and } s_0 > 1 \text{ such that} \\
    \text{\eqref{eq:cond:phi_psi_mq} as well as \eqref{eq:cond:H} (if $n = 2$) or \eqref{eq:cond:H_higher} (if $n \ge 3$) hold,} \\
    \maxmq \text{ as in \eqref{eq:cond:m_q1q2_main} and } r_\star \defs \min\{\frac1\ure, \frac{R}{2}\}
  \end{cases}
\end{align}
(but do not require \eqref{eq:cond:m_q1q2_main} nor \eqref{eq:cond:G_def} nor \eqref{eq:cond:G} to hold in this section)
and then fix
\begin{align}\label{eq:params_for_S}
  M_u, M_v, A, B > 0, \quad \alpha \in \left(0, \frac{2}{(\onemaxmq)_+} \right) \quad \text{and} \quad \kappa \ge n.
\end{align}
We shall focus on radially symmetric functions $(u, v)$ not necessarily solving \eqref{prob:proto} but instead having the properties
\begin{align}
\label{eq:l1_bdd}
 & \into u = M_u, \quad \into v\le M_v,\\[0.5em]
 \label{eq:upper_u}
 & \text{if $\maxmq < 1$, then } u(r) + 1 \le A r^{-\alpha} \text{ for all $r \in (0, R)$}, \\[0.5em]
 \label{eq:upper_v}
 & v(r)\le B r^{-\kappa}\text{ for all $r \in (0, R)$}.
\end{align}
(Here and below, we write $\varphi(|x|)$ for $\varphi(x)$ for radially symmetric functions $\varphi$.)
Thus, we define the spaces
\begin{align}\label{eq:def_S}\nn
  \mc S_u &\defs \mathcal{S}_u(M_u, A, \alpha) \defs \Big\{\,
    u \in C_{\mathrm{rad}, \mathrm{pos}}^2(\overline \Omega) \;\Big|\;
    u \text{ satisfies } \eqref{eq:l1_bdd} \text{ and } \eqref{eq:upper_u}\,\Big\}, \\
  \mc S_v &\defs \mathcal{S}_v(M_v, B, \kappa) \defs \Big\{\,
    v \in C_{\mathrm{rad}, \mathrm{pos}}^2(\overline \Omega) \;\Big|\;
    v \text{ satisfies } \partial_\nu v = 0 \text{ on } \partial \Omega,\, \eqref{eq:l1_bdd} \text{ and } \eqref{eq:upper_v}\,\Big\}, \nn\\
  \mc S &\defs \mathcal{S}(M_u, M_v, A, B, \alpha, \kappa) \defs \mathcal{S}_u(M_u, A, \alpha) \times \mathcal{S}_v(M_v, B, \kappa),
\end{align}
where $C_{\mathrm{rad}, \mathrm{pos}}^2(\overline \Omega) \defs \{\,\varphi \in \con2 \mid \varphi \text{ is nonnegative and radially symmetric}\,\}$.
Finally, we set 
\begin{align}\label{eq:def_F}
	\mathcal{F}(u,v) &\defs \frac12 \intom |\nabla v|^2 + \frac12 \intom v^2 - \intom uv + \intom G(u) \quad \text{and} \\
  \mathcal{D}(u,v) &\defs \into f^2+\into g^2 \label{eq:def_D}
\end{align}
for $(u, v) \in \mc S$, where
\begin{align}\label{eq:f}
 f &\defs \Delta v-v+u \quad \text{and} \\\label{eq:g}
 g &\defs \left( \frac{\phi(u)}{\sqrt{\psi(u)}} \nabla u - \sqrt{\psi(u)} \nabla v \right) \cdot \frac{x}{|x|}.
\end{align}

The goal of the present section is to prove the following.
\begin{lemma}\label{lm:F_ge_C_theta}
  Assume \eqref{eq:ass_fd} and \eqref{eq:params_for_S}.
  Then there are $C>0$ and $\gamma \in (0, 1)$ such that for all $(u, v) \in \mc S$,
  \begin{align}\label{eq:F_ge_C_theta:est}
    \F(u,v)\ge -C \left(\D^{\gamma}(u,v)+1\right).
  \end{align}
\end{lemma}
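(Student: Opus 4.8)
The plan is to establish \eqref{eq:F_ge_C_theta:est} by estimating the "bad" term $-\intom uv$ against the good terms $\frac12\intom|\nabla v|^2 + \frac12\intom v^2 + \intom G(u)$ and a suitable power of $\mc D(u,v)$. I would begin by localizing: fix a small radius $r_0 \in (0, r_\star)$ (to be chosen at the end) and split $\intom uv = \intomr uv + \int_{\Omega\setminus B_{r_0}} uv$. On the outer region $\Omega\setminus B_{r_0}$, where $|x|\ge r_0$, the pointwise bounds \eqref{eq:upper_u} and \eqref{eq:upper_v} make $u$ and $v$ bounded (by constants depending on $A$, $B$, $\alpha$, $\kappa$, $r_0$), so $\int_{\Omega\setminus B_{r_0}} uv$ is controlled purely in terms of $M_u$ (via \eqref{eq:l1_bdd}) and the pointwise data, contributing only to the additive constant $C$. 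The real work is the inner region.

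On $B_{r_0}$, the strategy is the one sketched in the introduction. First I would use the radial structure: since $v$ satisfies $\partial_\nu v = 0$ and is radial, one can write $v$ in terms of an integral of $\Delta v$, and combined with $f = \Delta v - v + u$ this lets one compare $v$ pointwise (on $B_{r_0}$) to quantities built from $\intom u = M_u$ and from $\int_{B_{r_0}} |f|$, which by Cauchy--Schwarz is $\le |B_{r_0}|^{1/2}\mc D^{1/2}(u,v)$. The upshot should be an estimate of the type $\intomr uv \le C\big(\intomr |x|^{2-\delta_0} u(u+1)^{1-q_1} + \text{(terms involving }\mc D^\gamma) + 1\big)$ for suitable small $\delta_0 > 0$ and $\gamma \in (0,1)$, possibly after also invoking \eqref{eq:cond:H} or \eqref{eq:cond:H_higher} (which is exactly where $H$ enters, controlling the "$uv$"-type contribution coming from the $-v+u$ part of $f$ via an integration against $\frac{\sigma\phi(\sigma)}{\psi(\sigma)}$). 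The dissipation-derived terms come with exponents strictly below $1$ because the factor $r_0^{2-\delta_0}$ or a power of $r_0$ can be used to absorb constants; one should be able to interpolate any troublesome power of $\mc D$ between $\mc D^\gamma$ and $1$ using Young's inequality.

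The heart of the matter — and the step I expect to be the main obstacle — is controlling $\intomr |x|^{2-\delta_0} u(u+1)^{1-q_1}$ (this is \eqref{eq:intro:bad_term} with the $|x|$-weight kept) by $\eta\intom G(u) + C$ for arbitrarily small $\eta > 0$. When $\maxmq \ge 1$ this is routine (mass conservation if $q_1\ge 1$, or bounding $(u+1)^{1-q_1}\le(u+1)^{m-q_2}$ times a power of $r_0$ if $m\ge 1$), which recovers the known cases. When $\maxmq < 1$, the new idea is to feed in the singular pointwise bound \eqref{eq:upper_u}: write $u(u+1)^{1-q_1} \le u\cdot(A r^{-\alpha})^{1-q_1}$ on part of the integrand and combine with $|x|^{2-\delta_0}$, so that $\intomr |x|^{2-\delta_0}u(u+1)^{1-q_1} \lesssim A^{1-q_1}\intomr |x|^{2-\delta_0-\alpha(1-q_1)} u$; choosing $\alpha$ close to its supremum $\frac{2}{(1-\maxmq)_+}$ from \eqref{eq:params_for_S} (legitimized by \eqref{eq:cond:m_q1q2_main}) and $\delta_0$ small makes the remaining power of $|x|$ close to $0$ (in fact nonnegative when $\alpha(1-\maxmq)<2$), so the weight is bounded and one is left with $\intom u = M_u$, giving a pure constant. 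The delicate bookkeeping is to verify that the exponent arithmetic closes — i.e.\ that $\eqref{eq:cond:m_q1q2_main}$ (equivalently $\alpha(1-\maxmq)<2$ for admissible $\alpha$) is exactly what is needed — and that the dependence of $C$ on $(M_u, M_v, A, B, \alpha, \kappa)$ and on $r_0$ is tracked explicitly, since Section~\ref{sec:ftbu_init} will need it. Once the inner estimate reads $\intomr uv \le \eta\intom G(u) + \frac14\intom|\nabla v|^2 + \frac14\intom v^2 + C(\mc D^\gamma(u,v)+1)$ with $\eta$ small, assembling the pieces and absorbing into $\mc F$ gives \eqref{eq:F_ge_C_theta:est}. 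Finally I would fix $r_0$ (and then $\eta$) small enough to make all absorptions valid, completing the proof.
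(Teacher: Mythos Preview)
Your overall strategy (inner/outer split, and using the pointwise bound \eqref{eq:upper_u} on the inner piece) matches the paper, and you have correctly identified the key term $\intb |x|^{2-\delta_0} u(u+1)^{1-q_1}$. However, two genuine gaps prevent the plan from closing.

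First, and most seriously, you cannot fix $r_0$ independently of $(u,v)$. With $r_0$ fixed, the inner estimate naturally produces terms of size $r_0^{\delta_0}\|f\|_{\leb2}^2 + r_0^{\delta_0}\|g\|_{\leb2}^2 = r_0^{\delta_0}\mc D(u,v)$, and no Young-type interpolation converts $\mc D$ into $\mc D^\gamma + C$ with $\gamma < 1$. Moreover, the outer region does \emph{not} contribute only to the additive constant: the paper's Lemma~\ref{lem:F_outer} gives $\int_{\Omega\setminus B_{r_0}} uv \le BM_u r_0^{-\kappa}$, and the boundary terms at $|x|=r_0$ in the inner estimate contribute $r_0^{n-2-2\kappa}$ via \eqref{eq:upper_v}; both blow up as $r_0 \to 0$. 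The mechanism for obtaining $\gamma < 1$ is to choose $r_0 = r_0(\mc D(u,v))$ optimally---in the paper, $r_0 \sim \mc D^{-1/(2\kappa+\delta_0-(n-2))}$---so that $r_0^{\delta_0}\mc D$ and $r_0^{n-2-2\kappa}$ balance to a common sublinear power of $\mc D$. Without this step, \eqref{eq:F_ge_C_theta:est} does not follow.

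Second, your treatment of the bad term bounds the full factor $(u+1)^{1-q_1}$ by $(Ar^{-\alpha})^{1-q_1}$ and then appeals to mass conservation. The resulting weight has exponent $2-\delta_0-\alpha(1-q_1)$, but \eqref{eq:params_for_S} only guarantees $\alpha(1-\maxmq) < 2$; since $\maxmq \ge q_1$ always, this does not ensure $\alpha(1-q_1) < 2$ when $m > q_2$. The paper's Lemma~\ref{lem:u_g} instead writes $(u+1)^{1-q_1} = (u+1)^{1-\maxmq}(u+1)^{(m-q_2)_+}$, applies the pointwise bound only to the first factor, and absorbs $u(u+1)^{(m-q_2)_+}$ into $c_2 G(u) + c_2 u + c_2$; this is why the estimate genuinely needs $\eta\intom G(u)$ rather than just $M_u$.
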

We emphasize that $C$ may depend on the quantities in \eqref{eq:ass_fd} and \eqref{eq:params_for_S} only.

Plugging \eqref{eq:f} into \eqref{eq:def_F} and integrating by parts shows $\mc F(u, v) = -\frac12 \intom |\nabla v|^2 - \frac12 \intom v^2 - \intom v f + \intom G(u)$,
which suggests that upper estimates for $\intom |\nabla v|^2$ are essential for showing \eqref{eq:F_ge_C_theta:est}.
To that end, we follow \cite{WinklerFinitetimeBlowupHigherdimensional2013} and first focus on suitable estimates holding in the inner region $B_{r_0} \defs B_{r_0}(0)$.

\begin{lemma}\label{lem:nabla_c}
  Assume \eqref{eq:ass_fd} and \eqref{eq:params_for_S} and let
  \begin{align}\label{eq:def_ell}
    \ell(r) \defs
    \begin{cases}
      - \ln r, & n = 2, \\[0.5em]
      \dfrac{1}{n-2}, & n \ge 3,
    \end{cases}
    \qquad \text{for $r > 0$}.
  \end{align}
  Then there exists $C > 0$ such that for all $(u, v) \in \mc S$, all $r_0\in(0, r_\star]$ and all $\mu \in (0, 1)$, 
	\begin{align}\label{eq:lem:nabla_c}
	\nn
	      \intb |\nabla v|^2
  &\le  \frac{2n \omega_n}{1-\mu} \int_0^{r_0} r^{n-1} \ell(r) (H(u))_+
        - \frac{1}{1-\mu} \intb v^2
        + \frac{C r_0}{\mu(1-\mu)} \intb f^2 \\
	&\pe  +\, \frac{2 \omega_n}{1-\mu} \intbr r^n \ell(r) \frac{u}{\sqrt{\psi(u)}} g \dr
        + \frac{\omega_n}{1-\mu} r_0^n \ell(r_0) \big(v^2(r_0) + 2|H(0)| - v_r^2(r_0)\big).
		\end{align}
\end{lemma}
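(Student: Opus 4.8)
The plan is to work in radial coordinates and test the second equation of \eqref{prob:proto} against a carefully chosen weight. Since $(u,v)$ need not solve \eqref{prob:proto}, we instead use the \emph{definition} $f = \Delta v - v + u$ directly: in radial variables this reads $r^{1-n}(r^{n-1}v_r)_r - v + u = f$. The natural multiplier is $\ell(r)$ from \eqref{eq:def_ell}, which (up to a constant) is the fundamental solution of the Laplacian and satisfies $(r^{n-1}\ell'(r))_r = 0$ on $(0,\infty)$ together with $r^{n-1}\ell'(r) = -\tfrac{1}{\omega_n}\cdot\omega_n = -1$ when $n \ge 3$ and analogously for $n=2$; in either case $r^{n-1}\ell'(r)$ is constant. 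First I would multiply $r^{1-n}(r^{n-1}v_r)_r - v + u = f$ by $-\omega_n r^{n-1}\ell(r)$ (or rather integrate $|\nabla v|^2 = \omega_n\int_0^{r_0} r^{n-1}v_r^2$ and manipulate). Concretely, the cleanest route is: starting from $\intb|\nabla v|^2 = \omega_n\int_0^{r_0} r^{n-1}v_r^2\,dr$, integrate by parts once to get $-\omega_n\int_0^{r_0}(r^{n-1}v_r)_r v\,dr + \omega_n r_0^{n-1}v_r(r_0)v(r_0)$, then substitute $(r^{n-1}v_r)_r = r^{n-1}(f + v - u)$ from the equation for $f$. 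This yields
\begin{align*}
  \intb|\nabla v|^2 = -\omega_n\int_0^{r_0} r^{n-1}(f+v-u)v\,dr + \omega_n r_0^{n-1}v_r(r_0)v(r_0),
\end{align*}
i.e.\ $\intb|\nabla v|^2 + \intb v^2 = \intb uv - \intb fv + \omega_n r_0^{n-1}v_r(r_0)v(r_0)$, which is just integration by parts in \eqref{eq:def_F}; but this does not yet produce the $\ell$-weighted $H(u)$ term. So instead the key manipulation must bring in the \emph{first} equation through $g$ from \eqref{eq:g}.

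The heart of the matter is the term $\intb uv$. I would rewrite $v$ using the representation $v(r) = v(r_0) + \int_r^{r_0} (-v_r(\rho))\,d\rho$ — or better, use the identity obtained by integrating the $f$-equation: for $r \in (0, r_0)$,
\begin{align*}
  r^{n-1}v_r(r) = r_0^{n-1}v_r(r_0) - \int_r^{r_0}\rho^{n-1}(f + v - u)\,d\rho,
\end{align*}
so that $-v_r(r) = r^{1-n}\int_r^{r_0}\rho^{n-1}(u - v - f)\,d\rho - r^{1-n}r_0^{n-1}v_r(r_0)$. Then $\intb uv = \omega_n\int_0^{r_0} r^{n-1}u(r)v(r)\,dr$, and writing $v(r) = v(r_0) - \int_r^{r_0}v_r(s)\,ds$ and substituting the above, Fubini's theorem converts the resulting double integral into $\int_0^{r_0}\rho^{n-1}(u-v-f)(\rho)\big(\int_0^\rho r^{n-1}u(r)\cdot r^{1-n}\,dr\cdot(\text{weight})\big)$-type expressions; the inner mass-type integral $\int_0^\rho r^{n-1}u\,dr$ combined with the chemotactic flux identity $\phi(u)u_r - \psi(u)v_r = \sqrt{\psi(u)}\,g$ (from \eqref{eq:g}) is precisely what generates $H(u) = \int_{s_0}^u \sigma\phi(\sigma)/\psi(\sigma)\,d\sigma$ after an integration by parts in the $u$-variable, together with the remainder involving $\tfrac{u}{\sqrt{\psi(u)}}g$. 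The factor $\ell(r)$ arises because $\int_r^{r_0}\rho^{1-n}\,d\rho = \ell(r) - \ell(r_0)$ (up to constants), and this is where the dichotomy $n=2$ versus $n\ge 3$ enters. The boundary terms at $r = r_0$ collect into $\omega_n r_0^n\ell(r_0)(v^2(r_0) + 2|H(0)| - v_r^2(r_0))$ after using $H(0)$ to absorb the lower limit $s_0$ in $H$; the sign is handled by the absolute value and the $(\cdot)_+$ on $H(u)$. Positivity of $u$, $v$ and the fact that $\ell \ge 0$ on $(0, r_\star]$ (since $r_\star \le 1/\ure < 1$) let us discard favorably-signed contributions and replace $H(u)$ by $(H(u))_+$.

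Finally, the terms involving $f$ and the boundary term $r_0^{n-1}v_r(r_0)v(r_0)$ must be estimated. The $\intb fv$-type contribution, after the Fubini rearrangement, appears with an $\ell$-weight and can be absorbed: by Young's inequality, $\int_0^{r_0} r^{n-1}\ell(r)|f|\,|\text{(mass term)}|$ is bounded by $\mu\cdot(\text{something controlled by }\intb v^2)$ plus $\tfrac{C r_0}{\mu}\intb f^2$, using that $r\mapsto r\,\ell(r)$ is bounded on $(0, r_\star]$ and that the mass integral $\int_0^\rho r^{n-1}u\,dr \le M_u/\omega_n$ is uniformly bounded by \eqref{eq:l1_bdd}; the $\tfrac{1}{1-\mu}$ prefactors come from moving a $\mu$-fraction of the $\intb v^2$ term and the $\intb|\nabla v|^2$ term to the left-hand side and dividing. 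The pointwise bound \eqref{eq:upper_v} on $v$ and the derivative estimate on $v_r$ at $r=r_0$ (obtainable since $r_0 \le r_\star \le R/2$ is bounded away from the boundary, or simply kept as the explicit $-v_r^2(r_0)$ term) control the remaining boundary contributions. \textbf{The main obstacle} I expect is the bookkeeping in the double Fubini exchange that produces the $H(u)$ term: one has to integrate by parts in the spatial variable \emph{and} in the $u$-variable in the right order, keep precise track of all boundary terms at both $r = 0$ (where $\ell$ may be singular but is tamed by the powers of $r$ and by $u+1 \le Ar^{-\alpha}$ with $\alpha < 2/(1-\maxmq)_+$, guaranteeing integrability of $r^{n-1}\ell(r) H(u)$ near zero via \eqref{eq:cond:H}/\eqref{eq:cond:H_higher}) and at $r = r_0$, and correctly route the sign so that only $(H(u))_+$ survives on the right. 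Getting the constant in front of $\tfrac{Cr_0}{\mu(1-\mu)}\intb f^2$ to be genuinely $O(r_0)$ — rather than $O(1)$ — is the quantitatively delicate point, and it hinges on the boundedness of $r\ell(r)$ near $0$ together with the uniform mass bound.
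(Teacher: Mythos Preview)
Your plan takes a route that is both different from and more complicated than the paper's, and it has genuine gaps. The paper does \emph{not} start by isolating $\int_{B_{r_0}} uv$ and then representing $v$ via an integrated form of the $f$-equation. Instead it begins with the algebraic identity $r^{1-n} = (-r^{2-n}\ell(r))_r$ (valid for both $n=2$ and $n\ge 3$ with the given $\ell$), rewrites $\int_0^{r_0} r^{n-1}v_r^2\,dr = \int_0^{r_0} (r^{n-1}v_r)^2 (-r^{2-n}\ell(r))_r\,dr$, and integrates by parts once. This immediately produces the weight $r^n\ell(r)$ on the cross term $v_r\cdot(r^{n-1}v_r)_r$, into which one substitutes $(r^{n-1}v_r)_r = r^{n-1}(f+v-u)$. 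The three resulting pieces are handled separately: the $f$-piece by Young's inequality (this is where $\mu$ enters, and the $Cr_0$ comes from $\sup_{(0,r_\star)} r\ell^2(r)<\infty$), the $v$-piece by a further integration by parts on $r^n\ell(r)(v^2)_r$ (yielding the $-\int r^{n-1}v^2$ using $\ell\ge 1$ on $(0,r_\star)$), and the $u$-piece by the \emph{pointwise} identity $uv_r = (H(u))_r - \tfrac{u}{\sqrt{\psi(u)}}g$ coming from \eqref{eq:g}, followed by one more integration by parts. No Fubini and no mass integral $\int_0^\rho r^{n-1}u$ enter at all.

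Two concrete problems with your outline. First, your claimed identity $\int_r^{r_0}\rho^{1-n}\,d\rho = \ell(r)-\ell(r_0)$ is false for $n\ge 3$, since the paper's $\ell$ is then the constant $\tfrac{1}{n-2}$; the correct statement is $\int_r^{r_0}\rho^{1-n}\,d\rho = r^{2-n}\ell(r) - r_0^{2-n}\ell(r_0)$, which is a different weight. Second, and more seriously, your proposed mechanism for generating $H(u)$ does not work: after Fubini you would have $\int_0^{r_0} v_r(s)\,U(s)\,ds$ with $U(s)=\int_0^s r^{n-1}u\,dr$, and substituting $v_r = \tfrac{\phi(u)}{\psi(u)}u_r - \tfrac{g}{\sqrt{\psi(u)}}$ gives $U(s)\cdot\tfrac{\phi(u(s))}{\psi(u(s))}u_r(s)$, which is \emph{not} a total derivative because $U(s)$ depends on $u$ on all of $(0,s)$, not just on $u(s)$. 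The clean identity $u\,v_r = (H(u))_r - \tfrac{ug}{\sqrt{\psi(u)}}$ only works because $u$ sits as a pointwise factor; your detour through the mass integral destroys precisely this structure. Your account of where $-\tfrac{1}{1-\mu}\intb v^2$ comes from is likewise off: in the actual proof it arises directly from integrating $r^n\ell(r)(v^2)_r$ by parts, not from absorbing a $\mu$-fraction of anything.
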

\begin{proof}
  We fix $(u, v) \in \mc S$, $r_0 \in (0, r_\star)$ and $\mu \in (0, 1)$.
	Upon integrating by parts and by \eqref{eq:f},
	\begin{align}\label{eq:lem:nabla_c:2}
	\nn
	 \int_0^{r_0} r^{n-1} v_r^2 \dr
		&= \intbr (r^{n-1} v_r)^2 (-r^{2-n} \ell(r))_r \dr \\
		\nn
		&= -(r_0^{n-1}v_r(r_0))^2 r_0^{2-n} \ell(r_0) + \intbr r^{2-n} \ell(r) \cdot 2r^{n-1}v_r\cdot(r^{n-1} v_r)_r \dr\\
		\nn
		&= -r_0^{n} \ell(r_0) v_r^2(r_0) + 2\intbr r \ell(r) v_r(r^{n-1} f +r^{n-1} v-r^{n-1} u) \dr\\
		&= -r_0^{n} \ell(r_0) v_r^2(r_0) + 2\intbr r^n \ell(r) v_r f \dr + \intbr r^n \ell(r) (v^2)_r \dr - 2 \intbr r^n \ell(r) uv_r \dr.
	\end{align}
  Here, Young's inequality warrants that
  \begin{align}\label{eq:lem:nabla_c:3}
     2\intbr r^n \ell(r) v_r f \dr &\le \mu \intbr r^{n-1} v_r^2 \dr + \frac{1}{\mu} \intbr r^{n+1} \ell^2(r) f^2 \dr
  \end{align}	
  and another integration by parts shows
  \begin{align}\label{eq:lem:nabla_c:4}
        \intbr r^n \ell(r) (v^2)_r \dr
     &= r_0^n \ell(r_0) v^2(r_0) - \intbr \big(nr^{n-1} \ell(r) + r^n \ell'(r)\big) v^2 \dr \nn \\
     &\le r_0^n \ell(r_0) v^2(r_0) - \intbr r^{n-1} v^2 \dr,
  \end{align}
  since $n \ell(r) + r \ell'(r) \ge 1$ for all $r \in (0, r_0) \subset (0, \frac1\ure)$.
  Recalling (\ref{eq:g}) and (\ref{eq:cond:H_def}), we see that
  \begin{align}\label{eq:lem:nabla_c:5}
  \nn
    - 2 \intbr r^n \ell(r) uv_r  \dr &= - 2 \intbr r^n \ell(r) \left(\frac{u\phi(u)}{\psi(u)}u_r-\frac{u}{\sqrt{\psi(u)}} g \right ) \dr\\
    &= - 2 \intbr r^n \ell(r) (H(u))_r \dr + 2 \intbr r^n \ell(r) \frac{u}{\sqrt{\psi(u)}} g \dr,
  \end{align}
  where a final integration by parts gives
  \begin{align}\label{eq:lem:nabla_c:6}
          - 2\intbr r^n \ell(r) (H(u))_r \dr \nn
    &=    - 2r_0^n \ell(r_0) H(u(r_0))
          + 2n \intbr r^{n-1} \ell(r) H(u) \dr
          + 2\intbr r^{n} \ell'(r) H(u) \dr \\
    &\le  2r_0^n \ell(r_0) |H(0)|
          + 2n \intbr r^{n-1} \ell(r) (H(u))_+ \dr,
  \end{align}	
  since $\ell'(r) H(u) \le r^{-1} \ell(r) (H(u))_-$ for $r \in (0, r_0)$.
  Collecting \eqref{eq:lem:nabla_c:2}--\eqref{eq:lem:nabla_c:6} yields
  \begin{align*}
          (1-\mu)\intbr r^{n-1}v_r^2 \dr
    &\le  2n \intbr r^{n-1} \ell(r) (H(u))_+ \dr
          - \intbr r^{n-1} v^2 \dr
          + \frac{r_0}{\mu} \intbr r^{n} \ell^2(r) f^2 \dr \\
    &\pe  +\, 2 \intbr r^n \ell(r) \frac{u}{\sqrt{\psi(u)}} g \dr
          + r_0^n \ell(r_0) \big(v^2(r_0) + 2|H(0)| - v_r^2(r_0)\big),
  \end{align*}
  which implies \eqref{eq:lem:nabla_c} due to $\intb |\nabla v|^2 = \omega_n \int_0^{r_0} r^{n-1} v_r^2 \dr$
  upon setting $C \defs \sup_{r \in (0, r_\star)} r \ell^2(r)$.
\end{proof}

Estimating the fourth term on the right-hand side in \eqref{eq:lem:nabla_c} crucially relies on the pointwise upper estimate in \eqref{eq:upper_u}.
That is, the following lemma constitutes the decisive difference to the analysis performed in the precedents \cite{CieslakStinnerFinitetimeBlowupGlobalintime2012,CieslakStinnerFiniteTimeBlowupSupercritical2014,CieslakStinnerNewCriticalExponents2015}.
\begin{lemma}\label{lem:u_g}
  Assume \eqref{eq:ass_fd} and \eqref{eq:params_for_S}, let $\ell$ be as in \eqref{eq:def_ell} and let $\eta_1 > 0$.
	We can find $\delta_0>0$ and $C>0$ such that for all $u \in \mc S_u$ and all $r_0\in(0, r_\star]$, 
	\begin{align}\label{eq:lem:u_g}
		\omega_n \intbr r^n \ell(r) \frac{u}{\sqrt{\psi(u)}} g \dr\le \eta_1 \intb G(u) + C r_0^{\delta_0} \norm[L^2(\Omega)]{g}^2 + C.
	\end{align}
\end{lemma}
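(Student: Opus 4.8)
The plan is to estimate the integrand $\frac{u}{\sqrt{\psi(u)}}\,g$ by Young's inequality, splitting off a factor that can be absorbed into $\|g\|_{L^2(\Omega)}^2$ and a factor that must be controlled by $\intb G(u)$ plus a constant. Concretely, for any weight $w(r)>0$ to be chosen, Young's inequality gives
\begin{align*}
  \omega_n \intbr r^n \ell(r) \frac{u}{\sqrt{\psi(u)}} g \dr
  \le \frac{\omega_n}{2} \intbr r^{n+1} \ell^2(r)\, w(r)\, \frac{u^2}{\psi(u)} \dr
      + \frac{\omega_n}{2} \intbr r^{n-1} \frac{1}{w(r)}\, g^2 \dr.
\end{align*}
To make the second term into $C r_0^{\delta_0}\|g\|_{L^2(\Omega)}^2$, one chooses $\frac1{w(r)} = r^{2-\delta_0}$ (so $w(r)=r^{\delta_0-2}$); then the second integral is $\frac{\omega_n}{2} r_0^{\delta_0} \cdot r_0^{-\delta_0}\intbr r^{n-1} r^{2-\delta_0} g^2\dr$ — more carefully, bound $r^{2-\delta_0}\le r_0^{2-\delta_0}$ inside $B_{r_0}$, giving $\le \frac{\omega_n}{2} r_0^{2-\delta_0}\intbr r^{n-1} g^2\dr \le C r_0^{2-\delta_0}\|g\|_{L^2(\Omega)}^2$; here $2-\delta_0>0$ for small $\delta_0$, so this has the required form. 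The first term then becomes (using $\ell^2(r)\le C r^{-\sigma}$ for any $\sigma>0$ — logarithmically near $0$ in 2D, and $\ell$ bounded in higher dimensions)
\begin{align*}
  C \intbr r^{n+1-\delta_0} r^{-\sigma+\delta_0-2}\, \frac{u^2}{\psi(u)} \dr
  = C \intbr r^{n-1-\sigma}\, \frac{u^2}{\psi(u)} \dr.
\end{align*}

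**Reducing to $\intb G(u)$.** The heart of the matter is to bound $C\intbr r^{n-1-\sigma}\frac{u^2}{\psi(u)}\dr$ by $\eta_1\intb G(u)+C$. Using the lower bound $\psi(u)\ge K_{\psi,1} u(u+1)^{q_1-1}$, we have $\frac{u^2}{\psi(u)}\le K_{\psi,1}^{-1} u(u+1)^{1-q_1}$. Now I distinguish two cases according to whether $\maxmq<1$. If $\maxmq\ge 1$ (equivalently, by the definition $\maxmq=\max\{q_1,m+q_1-q_2\}$, either $q_1\ge1$ or $m-q_2\ge0$), I use that $u(u+1)^{1-q_1}\le (u+1)^{2-q_1}$ and, exactly as in the precedents \cite{CieslakStinnerFinitetimeBlowupGlobalintime2012,CieslakStinnerFiniteTimeBlowupSupercritical2014,CieslakStinnerNewCriticalExponents2015}, control this either via conservation of mass ($\into u = M_u$) when $q_1\ge1$ (so $2-q_1\le1$), bounding $\intbr r^{n-1-\sigma}(u+1)\dr$ by $C\into u + C \le C M_u + C$ after choosing $\sigma$ small enough that $r^{-\sigma}$ is integrable, or via $r^{n-1-\sigma}(u+1)^{2-q_1}\le r_0^{n-1-\sigma}\cdot r^{-\text{(something)}}$... — more precisely, when $m-q_2\ge 0$ one has $G(u)\gtrsim (u+1)^{m-q_2+1}\gtrsim (u+1)^{2-q_1}$ up to lower-order terms (via \eqref{eq:cond:phi_psi_mq}), so a small multiple of $\intb G(u)$ already dominates once the prefactor carries a power $r_0^{\delta_0}$ one pulls out. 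The genuinely new case is $\maxmq<1$: here \eqref{eq:upper_u} from $u\in\mc S_u$ gives $u+1\le A r^{-\alpha}$ with $\alpha<\frac2{1-\maxmq}$, so
\begin{align*}
  u(u+1)^{1-q_1} \le (u+1)^{2-q_1}
  = (u+1)^{(1-\maxmq)}\cdot (u+1)^{2-q_1-(1-\maxmq)}
  \le A^{1-\maxmq} r^{-\alpha(1-\maxmq)} (u+1)^{1-q_1+\maxmq}.
\end{align*}
Since $\alpha(1-\maxmq)<2$, write $\alpha(1-\maxmq)=2-2\delta_1$ for some $\delta_1>0$; then $r^{n-1-\sigma}r^{-\alpha(1-\maxmq)} = r^{n-3-\sigma+2\delta_1}$, and I split this as $r^{\delta_0}\cdot r^{n-3-\sigma+2\delta_1-\delta_0}$, choosing $\delta_0>0$ small and $\sigma>0$ small so that the remaining exponent is still $>-n$ (indeed $\ge -1$ suffices for integrability against $r^{0}$; but actually what we need is that $r^{n-3-\sigma+2\delta_1-\delta_0}$ remains integrable on $(0,r_0)$, i.e.\ exponent $>-1$, which holds for $n\ge2$ once $\delta_1>\tfrac{\sigma+\delta_0}2$ — arrange $\sigma,\delta_0$ tiny). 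Then I bound the remaining factor $(u+1)^{1-q_1+\maxmq}$: note $1-q_1+\maxmq\le 1$ (since $\maxmq\ge q_1$ would give $\le 1$... need $\maxmq\le q_1$? — no: $\maxmq=\max\{q_1,\dots\}\ge q_1$, so $1-q_1+\maxmq\ge1$; instead I iterate this factorization, pulling out $r^{-\alpha(1-\maxmq)}$ repeatedly, each time reducing the exponent of $(u+1)$ by $1-\maxmq$, until it drops below $1$, where mass conservation finishes it). Alternatively, and more cleanly, one applies the $u+1\le Ar^{-\alpha}$ bound to the whole excess exponent at once: $(u+1)^{2-q_1} = (u+1)\cdot(u+1)^{1-q_1}$ and $(u+1)^{1-q_1}\le (Ar^{-\alpha})^{(1-q_1)_+}$ if $q_1\le1$ — combined with $G(u)\ge c(u+1)^{?}$... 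Rather than commit here, the key structural point is: the pointwise bound converts surplus powers of $u$ into negative powers of $r$ whose exponent stays above the integrability threshold precisely because $\alpha(1-\maxmq)<2$, and any residual low power of $u$ is handled by $\into u=M_u$ or by $\intb G(u)$.

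**Main obstacle.** The delicate bookkeeping is ensuring that, after pulling out a factor $r_0^{\delta_0}$ (or a small $\eta_1$ via Young on the $G$-term), the leftover radial integral $\intbr r^{\beta} (u+1)^{\gamma_0}\dr$ is genuinely finite and uniformly bounded over $u\in\mc S_u$ — this is where $\alpha(1-\maxmq)<2$ (i.e.\ the first constraint in \eqref{eq:params_for_S}) enters, together with $n\ge2$ (which gives one extra power $r^{n-1}\ge r$). One must also verify that the constant $C$ depends only on $n$, $R$, the structural constants in \eqref{eq:cond:phi_psi_mq}, $M_u$, $A$, $\alpha$, $q_1$, $\eta_1$ — not on $u$ itself or on $r_0$ — which follows because every estimate above is pointwise in $r$ and uses only membership in $\mc S_u$. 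The remaining case $\maxmq\ge1$ is not new and is dispatched exactly as in the cited works using mass conservation and/or the coercivity of $G$; I would state it briefly and refer to those papers. I expect the two-dimensional case ($\ell(r)=-\ln r$) to require only the trivial observation $\ell^2(r)=(\ln r)^2\le C_\sigma r^{-\sigma}$ for $r\in(0,r_\star)$ and any $\sigma>0$, so the logarithm costs an arbitrarily small power of $r$ and does not affect the argument.
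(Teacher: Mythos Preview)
Your overall strategy matches the paper's: split via Young's inequality with a radial weight, absorb $\ell^2$ into an arbitrarily small negative power of $r$, bound $\frac{u^2}{\psi(u)} \le K_{\psi,1}^{-1} u(u+1)^{1-q_1}$, and in the new case $\maxmq < 1$ invoke the pointwise estimate \eqref{eq:upper_u} to trade surplus powers of $u+1$ for negative powers of $r$. One remark: the small factor $\eta_1$ must come from the Young constant, not from a power of $r_0$, since the estimate is required for all $r_0 \in (0, r_\star]$ including $r_0 = r_\star$; you acknowledge this in your final paragraph, but the earlier sentence ``a small multiple of $\intb G(u)$ already dominates once the prefactor carries a power $r_0^{\delta_0}$'' is misleading.

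The genuine gap is in the sub-case $\maxmq < 1$, $m > q_2$. After one application of the pointwise bound with exponent $1-\maxmq$ you correctly arrive at the residual exponent $1-q_1+\maxmq = 1+(m-q_2)$. Your proposed iteration does not close: each further application costs another factor $r^{-\alpha(1-\maxmq)}$, and since \eqref{eq:params_for_S} only imposes $\alpha(1-\maxmq) < 2$ (with $\alpha$ otherwise unrestricted in this section), the radial weight may become non-integrable before the exponent of $u+1$ reaches $1$. Your alternative of bounding $(u+1)^{1-q_1}$ in one shot gives $r^{-\alpha(1-q_1)}$ with $1-q_1 > 1-\maxmq$ here, so $\alpha(1-q_1)$ can exceed $2$. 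The missing step---which you already invoked for $\maxmq \ge 1$ but did not carry over---is that \eqref{eq:cond:phi_psi_mq} yields $G(s) \ge c^{-1}(s+1)^{m-q_2+1} - c(s+1)$ when $m > q_2$, so that $u(u+1)^{(m-q_2)_+} \le c\,G(u) + c\,u + c$ holds in all cases. Thus after exactly \emph{one} pointwise application (which, with $\delta_1$ chosen so that $\delta_1 + \alpha(1-\maxmq) = 2$, lands the radial weight at $r^{n-1}$), the residual $\intbr r^{n-1} u(u+1)^{(m-q_2)_+} \dr$ is directly controlled by $\omega_n^{-1}\intb G(u) + \omega_n^{-1}M_u + R^n/n$, and the prefactor $\eta_1$ is achieved by tuning the Young constant.
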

\begin{proof}
  Since $\alpha < \frac{2}{(\onemaxmq)_+}$ by \eqref{eq:params_for_S},
  there exists $\delta_1 > 0$ such that
  \begin{align}\label{eq:u_g:delta_1_def}
    \delta_1 + \alpha (\onemaxmq)_+ = 2.
  \end{align}
  We moreover fix $\delta_0 \in (0, \delta_1)$
  and then $c_1 > 0$ such that
  \begin{align}\label{eq:u_g:delta_0_def}
    r^{-\delta_0} \ell^2(r) \le c_1 r^{-\delta_1} 
    \qquad \text{for all $r \in (0, R)$}.
  \end{align}
  If $m > q_2$, \eqref{eq:cond:phi_psi_mq} implies
  \begin{align*}
          \frac{K_{\psi, 2}}{K_{\phi, 1}} G(s)
    &\ge  \int_{s_0}^s \int_{s_0}^\sigma (\tau + 1)^{m-q_2-1} \dtau \dsigma  \\
    &=    \frac{(s + 1)^{m-q_2+1}}{(m-q_2+1)(m-q_2)} - \frac{(s_0 + 1)^{m-q_2+1}}{(m-q_2+1)(m-q_2)} - \frac{(s-s_0)(s_0+1)^{m-q_2}}{m-q_2}
  \end{align*}
  for all $s \ge 0$,
  so that, independently of the sign of $m-q_2$, we may fix $c_2 > 0$ with
  \begin{align}\label{eq:u_g:G_est}
    s(s+1)^{(m-q_2)_+} \le c_2 G(s) + c_2 s + c_2
    \qquad \text{for all $s \ge 0$}.
  \end{align}
  Using Young's inequality, \eqref{eq:cond:phi_psi_mq} and \eqref{eq:u_g:delta_0_def}, we see that there is $c_3 > 0$ such that
  \begin{align}\label{eq:u_g:est_young}
    &\pe  \omega_n \intbr r^n \ell(r) \frac{u}{\sqrt{\psi(u)}} g \dr \nn \\
    &\le  c_3 \omega_n \int_0^{r_0} r^{n-1+\delta_0} g^2 \dr 
          + \frac{\eta_1 K_{\psi, 1} \omega_n}{c_1 c_2 \max\{1, A^{\onemaxmq}\}} \intbr r^{n+1-\delta_0} \ell^2(r) \frac{u^2}{\psi(u)} \dr \nn \\
    &\le  c_3 r_0^{\delta_0} \|g\|_{L^2(\Omega)}^2
          + \frac{\eta_1 \omega_n}{c_2 \max\{1, A^{\onemaxmq}\}}  \intbr r^{n+1-\delta_1} u(u+1)^{1-q_1} \dr
  \end{align}
  for all $u \in \mc S_u$ and all $r_0\in(0, r_\star]$.
  If $\maxmq = \max\{q_1, m+q_1-q_2\} \ge 1$, we  have
   \begin{align*}
          \intbr r^{n+1-\delta_1} u(u+1)^{1-q_1} \dr
    &\le  \intbr r^{n+1-\delta_1} u(u+1)^{(m-q_2)_+} \dr
  \end{align*}
  for all $u \in \mc S_u$ and all $r_0\in(0, r_\star]$.
  If on the other hand $\maxmq < 1$, then \eqref{eq:upper_u} and \eqref{eq:u_g:delta_1_def} warrant that
  \begin{align*}
          \intbr r^{n+1-\delta_1} u(u+1)^{1-q_1} \dr
    &\le  A^{\onemaxmq} \intbr r^{n+1-\delta_1- \alpha (\onemaxmq)} u(u+1)^{(m-q_2)_+} \dr \\
    &=    A^{\onemaxmq} \intbr r^{n-1} u(u+1)^{(m-q_2)_+} \dr
  \end{align*}
  for all $u \in \mc S_u$ and all $r_0\in(0, r_\star]$.
  Due to \eqref{eq:u_g:G_est} and \eqref{eq:l1_bdd}, we conclude that in both situations,
  \begin{align*}
    &\pe  \frac{1}{\max\{1, A^{\onemaxmq}\}} \intbr r^{n+1-\delta_1} u(u+1)^{1-q_1} \dr \\
    &\le  \intbr r^{n-1} u(u+1)^{(m-q_2)_+} \dr
     \le  c_2 \intbr r^{n-1} G(u) \dr
          + c_2 \intr r^{n-1} u \dr
          + \frac{c_2 R^n}{n} \\
    &=    \frac{c_2}{\omega_n} \int_{B_{r_0}} G(u)
          + \frac{c_2 M_u}{\omega_n}
          + \frac{c_2 R^n}{n}
    \qquad \text{for all $u \in \mc S_u$ and all $r_0 \in (0, r_\star)$}.
  \end{align*}
  Together with \eqref{eq:u_g:est_young}, this yields \eqref{eq:lem:u_g}.
\end{proof}

Next, we deal with the first term in \eqref{eq:lem:nabla_c}, similarly as in \cite{CieslakStinnerFiniteTimeBlowupSupercritical2014} (if $n = 2$) or \cite{CieslakStinnerFinitetimeBlowupGlobalintime2012} (if $n \ge 3$).
\begin{lemma}\label{lem:hu}
  Assume \eqref{eq:ass_fd} and \eqref{eq:params_for_S} and let $\ell$ be as in \eqref{eq:def_ell}.
	Then there are $\eta_2 \in (0, 1)$ and $C>0$ such that for all $u \in \mc S_u$ and all $r_0\in(0, r_\star]$,
	\begin{align}\label{eq:lem:hu}
        n \omega_n \intbr r^{n-1} \ell(r) (H(u))_+ \dr
    \le (1-\eta_2) \intb G(u) + C.
	\end{align}
\end{lemma}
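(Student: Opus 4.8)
The plan is to establish \eqref{eq:lem:hu} by treating the cases $n\ge 3$ and $n=2$ separately, since the weight $\ell$ from \eqref{eq:def_ell} is bounded in the former case but logarithmically singular in the latter. In both cases I would fix $u\in\mc S_u$ and $r_0\in(0,r_\star]$, rewrite $\int_{B_{r_0}}G(u)=\omega_n\int_0^{r_0}r^{n-1}G(u)\dr$, and record that $c_0\defs\max_{s\in[0,s_0]}|H(s)|$ is finite by continuity of $H$. Consequently $(H(s))_+$ can be bounded by the right-hand side of \eqref{eq:cond:H} (if $n=2$) or \eqref{eq:cond:H_higher} (if $n\ge3$) plus the harmless correction $c_0(s+1)$, the latter absorbing the range $s\in[0,s_0)$ that those assumptions do not cover (using $G\ge0$ and that the coefficient of $G$ is nonnegative).

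For $n\ge 3$, where $\ell\equiv\frac1{n-2}$, combining the previous step with \eqref{eq:cond:H_higher} gives the pointwise bound $(H(u))_+\le\frac{n-2-\vt}{n}G(u)+(K+c_0)(u+1)$. Multiplying by $\frac{n\omega_n}{n-2}r^{n-1}$ and integrating over $(0,r_0)$, the term involving $G$ becomes $\frac{n-2-\vt}{n-2}\int_{B_{r_0}}G(u)=(1-\eta_2)\int_{B_{r_0}}G(u)$ with $\eta_2\defs\frac{\vt}{n-2}$, which lies in $(0,1)$ because $\vt<1\le n-2$; the remaining term is a fixed multiple of $\int_0^{r_0}r^{n-1}(u+1)\dr\le\frac{M_u}{\omega_n}+\frac{R^n}{n}$ by the mass identity in \eqref{eq:l1_bdd}. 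This settles the higher-dimensional case.

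For $n=2$, where $\ell(r)=-\ln r>0$ on $(0,r_\star]$, the correction term contributes at most $2\omega_n c_0\int_0^{r_\star}r(-\ln r)\dr$, a finite constant, so it remains to bound $2\omega_n b\,J$, where $b$ is as in \eqref{eq:cond:H} and $J\defs\int_0^{r_0}r(-\ln r)\frac{u}{\ln u}\dr$ with the integral taken over the set of radii with $u(r)\ge s_0$. The crucial step is to split this set according to whether $u(r)<\frac1r$ or $u(r)\ge\frac1r$: on the first part $ru(r)<1$ and $\ln u(r)\ge\ln s_0>0$, so the integrand is at most $\frac{-\ln r}{\ln s_0}$, which is integrable on $(0,r_\star]$; on the second part $\ln u(r)\ge-\ln r$, so the integrand is at most $ru(r)$, whose integral over $(0,r_0)$ is at most $\frac{M_u}{\omega_n}$, again by \eqref{eq:l1_bdd}. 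Hence the left-hand side of \eqref{eq:lem:hu} is bounded by a constant depending only on the quantities in \eqref{eq:ass_fd} and \eqref{eq:params_for_S}, and since $G\ge0$ this gives \eqref{eq:lem:hu} with, e.g., $\eta_2=\frac12$.

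The argument is mostly routine bookkeeping once the two regimes are separated; the only genuinely delicate point I anticipate is the $n=2$ splitting by the threshold $u\sim\frac1r$, which is exactly what allows the unbounded factor $-\ln r$ to be absorbed — either into $\ln u$ when $u$ is large, or, when $u$ is only moderately large, into the then-available bound $ru<1$ together with the integrability of $-\ln r$ near the origin.
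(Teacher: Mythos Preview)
Your argument is correct. For $n\ge 3$ it coincides with the paper's proof. Note, incidentally, that the correction term $c_0(s+1)$ is unnecessary in both dimensions: since the integrand $\frac{\sigma\phi(\sigma)}{\psi(\sigma)}$ in \eqref{eq:cond:H_def} is positive by \eqref{eq:cond:phi_psi_pos}, one has $H(s)\le 0$ and hence $(H(s))_+=0$ for all $s\in[0,s_0)$, so the bounds from \eqref{eq:cond:H} and \eqref{eq:cond:H_higher} already cover the full range for $(H(\cdot))_+$.

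For $n=2$ the paper takes a different route: rather than splitting according to $u\lessgtr\frac1r$, it uses the Fenchel--Young inequality $\ln r^{-1}\cdot H\le r^{-1}+H\ln H-H$ and then shows from \eqref{eq:cond:H} that $H(s)\ln H(s)\le cs$ for $s\ge s_0$, after which the mass bound \eqref{eq:l1_bdd} finishes the estimate. Your level-set splitting is more elementary and avoids the convex-duality step; the paper's approach is slightly slicker in that it reduces everything to a single pointwise inequality and the $L^1$ bound, but both arrive at a constant bound on the left-hand side (with the $G$-term playing no role when $n=2$).
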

\begin{proof}
  We first suppose that $n = 2$.
  Then \eqref{eq:cond:H_def} and \eqref{eq:cond:H} imply $H(s) \le 0$ for $s < s_0$ as well as
  \begin{align*}
        H(s) \ln H(s)
    \le \frac{bs}{\ln s} \ln\left( \frac{bs}{\ln s} \right)
    =   \frac{bs}{\ln s} \left( \ln b + \ln s - \ln \ln s \right)
    \le \frac{b(|\ln b| + 1 + |\ln \ln s_0|)}{\min\{1, \ln s_0\}} \cdot s
  \end{align*}
  for all $s \ge s_0 > 1$.
  Since moreover $\ln r^{-1} \cdot H(s) \le r^{-1} + H(s) \ln H(s) - H(s)$ for all $r, s > 0$ by the Fenchel--Young inequality,
  we conclude that there is $c_1 > 0$ such that
  \begin{align*}
          n \intbr r^{n-1} \ell(r) (H(u))_+ \dr
    &\le  n \intbr 1 \dr + n \intbr r^{n-1} \mathds 1_{\{u \ge s_0\}} H(u) \ln H(u) \dr \nn \\
    &\le  n r_0  + c_1 \int_0^R r^{n-1} u \dr
     \le  nR + \frac{c_1 M_u}{\omega_n}
  \end{align*}
  for all $u \in \mc S_u$ and all $r_0\in(0, r_\star]$.
   
  If $n \ge 3$ and hence $\ell(r) = \frac{1}{n-2}$, \eqref{eq:cond:H_higher} asserts that there is $\eta_2 \in (0, 1)$ such that
  \begin{align*}
          n \intbr r^{n-1} \ell(r) (H(u))_+ \dr
    &\le  \frac{n-2-\vt}{n-2} \intbr r^{n-1} G(u) \dr + \frac{nK}{n-2} \int_0^R r^{n-1} (u + 1) \dr \nn \\
    &\le  (1-\eta_2) \intbr r^{n-1} G(u) \dr + \frac{nK}{n-2} \left(\frac{M_u}{\omega_n} + \frac{R^n}{n}\right)
  \end{align*}
  for all $u \in \mc S_u$ and all $r_0\in(0, r_\star]$,
  so that in both cases \eqref{eq:lem:hu} holds for some $\eta_2 \in (0, 1)$ and $C > 0$.
\end{proof}

Combining Lemmata~\ref{lem:nabla_c}--\ref{lem:hu} now provides the following estimate for $\F(u, v)$ in the inner region.
\begin{lemma}\label{lem:F_inner}
  Assume \eqref{eq:ass_fd} and \eqref{eq:params_for_S}.
  There exists $C>0$ such that for all $(u, v) \in \mc S$ and all $r_0\in(0, r_\star]$, we have
  \begin{align}\label{eq:lem:F_inner_higher}
    &\pe \intb uv-\frac 12\intb |\nabla v|^2-\frac 12 \intb v^2-\intb G(u) \nn \\
    &\le C \norm[L^2(\Om)]{f}^{\frac{2n+4}{n+2}}+C r_0^{\delta_0}\norm[L^2(\Om)]{f}^2 +C r_0^{\delta_0}\norm[L^2(\Om)]{g}^2 +C r_0^{n-2-2\kappa}.
  \end{align}
\end{lemma}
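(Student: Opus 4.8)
The plan is to combine the three preceding lemmata with a few elementary estimates for the remaining terms in the functional. Starting from Lemma~\ref{lem:nabla_c}, fix $\mu=\tfrac12$ (any fixed value in $(0,1)$ works), so that
\begin{align*}
  \intb |\nabla v|^2
  &\le 4n\omega_n \intbr r^{n-1}\ell(r)(H(u))_+ \dr
     - 2\intb v^2
     + C r_0 \intb f^2 \\
  &\pe + 4\omega_n \intbr r^n \ell(r)\frac{u}{\sqrt{\psi(u)}} g \dr
     + 2\omega_n r_0^n \ell(r_0)\big(v^2(r_0)+2|H(0)|-v_r^2(r_0)\big)
\end{align*}
for all $(u,v)\in\mc S$ and all $r_0\in(0,r_\star]$. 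Next I would insert this into
\[
  \intb uv - \tfrac12\intb|\nabla v|^2 - \tfrac12\intb v^2 - \intb G(u),
\]
so that the $-\tfrac12\intb|\nabla v|^2$ contribution produces $-2n\omega_n\intbr r^{n-1}\ell(r)(H(u))_+$, $+\intb v^2$ (which then cancels the $-\tfrac12\intb v^2$ leaving a harmless $+\tfrac12\intb v^2$), $-\tfrac{C r_0}2\intb f^2$, $-2\omega_n\intbr r^n\ell(r)\tfrac{u}{\sqrt{\psi(u)}}g$, and boundary terms. The sign of the $(H(u))_+$ term is the right one to feed into Lemma~\ref{lem:hu}, and the sign of the $g$-integral term is the right one for Lemma~\ref{lem:u_g}.

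The main work is then to bound the four "leftover" kinds of terms. First, $\intb uv$ must be absorbed: by $\eqref{eq:upper_v}$ we have $v\le B r^{-\kappa}$, and by Young's inequality $uv\le \varepsilon u v^{\text{(suitable power)}}+\dots$; more directly, one writes $\intb uv \le \tfrac{\eta}{2}\intb G(u)+ c\intb v^{p}$ for a suitable conjugate exponent and then uses the pointwise bound on $v$ together with $\kappa>n-2$ — after possibly shrinking $r_0$ — to estimate $\intb v^{p}$. Alternatively, since the term $+\intb v^2$ that survives from the cancellation is of the same type, I would treat $\intb uv+\tfrac12\intb v^2$ together, using $v\le Br^{-\kappa}$ to control the $v$-powers against $r_0^{n-2-2\kappa}$ and absorbing the $u$-part into $\eta\intb G(u)$ via $uv\le \eta G(u)+C(\text{function of }v)$; the constant $\kappa>n-2$ ensures the $r$-integral $\int_0^{r_0} r^{n-1-2\kappa}\dr$ is finite and of order $r_0^{n-2-2\kappa}$. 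Second, the term $\tfrac{C r_0}2\intb f^2$ is already $\le C r_0\|f\|_{L^2(\Omega)}^2 \le C r_0^{\delta_0}\|f\|_{L^2(\Omega)}^2$ since $r_0\le r_\star$ and we may take $\delta_0\le1$ (or simply keep it as a small power). Third, the $g$-integral is exactly the object of Lemma~\ref{lem:u_g}, giving $\le \eta\intb G(u)+Cr_0^{\delta_0}\|g\|_{L^2(\Omega)}^2+C$. Fourth, the $(H(u))_+$ integral is handled by Lemma~\ref{lem:hu}, giving $\le (1-\eta_2)\intb G(u)+C$.

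Collecting everything, the $\intb G(u)$ terms on the right combine to $\big((1-\eta_2)+2\eta\big)\intb G(u)$; choosing $\eta$ in Lemma~\ref{lem:u_g} small enough (say $2\eta<\eta_2$) makes this coefficient strictly less than one, and the full $-\intb G(u)$ on the left then dominates it, leaving a negative multiple of $\intb G(u)$ which we simply discard (it only helps). This is the crucial point where the pointwise estimate \eqref{eq:upper_u} enters, through Lemma~\ref{lem:u_g}. The boundary terms $r_0^n\ell(r_0)(v^2(r_0)+2|H(0)|-v_r^2(r_0))$ are estimated using $v(r_0)\le B r_0^{-\kappa}$ (and discarding $-v_r^2(r_0)\le0$), which again yields a contribution of order $r_0^{n-2-2\kappa}$ plus $C r_0^{n-2}$ — absorbed into the stated right-hand side since $n-2-2\kappa<n-2<0$ is false in general, so one keeps both; in any case all such powers are $\le C(1+r_0^{n-2-2\kappa})$ for $r_0\le r_\star$. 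Finally, the single term not yet matched, $C\|f\|_{L^2(\Omega)}^{(2n+4)/(n+2)}$, appears because somewhere one must estimate a quantity like $\intb uf$ or the pointwise/Sobolev control needed to bound $v(r_0)$ or $\intb v^2$ in terms of $f$: using $v_t=\Delta v-v+u$, i.e. $\Delta v-v+u=f$, elliptic regularity / a Gagliardo--Nirenberg-type interpolation gives $\|v\|$-type norms controlled by $\|f\|_{L^2(\Omega)}^{\theta}$ with the exponent $\tfrac{2n+4}{n+2}$ arising from the relevant interpolation in dimension $n$; this is where I expect the bookkeeping to be most delicate, and it is the step I would check most carefully. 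Once all pieces are assembled, \eqref{eq:lem:F_inner_higher} follows with a constant $C$ depending only on the data in \eqref{eq:ass_fd} and \eqref{eq:params_for_S}.
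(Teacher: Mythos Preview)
There is a genuine and fatal sign error in your plan. Lemma~\ref{lem:nabla_c} provides an \emph{upper} bound for $\intb |\nabla v|^2$; inserting this into the term $-\tfrac12\intb|\nabla v|^2$ (which carries a negative coefficient) would yield a \emph{lower} bound for that term, not an upper bound. Thus the substitution you describe does not give an upper bound for the quantity in \eqref{eq:lem:F_inner_higher}, and the subsequent claims about the ``right sign'' of the $(H(u))_+$ and $g$-terms are unfounded. Relatedly, the leftover $+\tfrac12\intb v^2$ you call ``harmless'' is in fact not controlled by anything in your argument, and your direct Young-type estimate $uv \le \eta G(u) + C(\text{function of }v)$ is not available in general: the hypotheses \eqref{eq:cond:G}, \eqref{eq:cond:G_higher} give only \emph{upper} bounds on $G$, so there is no inequality $u^{1+\eps}\lesssim G(u)$ to absorb the $u$-factor.

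The missing idea is to first rewrite $\intb uv$ via the identity $u = f + v - \Delta v$ from \eqref{eq:f}: integrating by parts gives
\[
  \intb uv = \intb fv + \intb |\nabla v|^2 + \intb v^2 - \omega_n r_0^{n-1} v_r(r_0) v(r_0).
\]
After subtracting $\tfrac12\intb|\nabla v|^2+\tfrac12\intb v^2$, a \emph{positive} multiple of $\intb|\nabla v|^2$ remains (in the paper, $\tfrac{1+\eps_0}{2}\intb|\nabla v|^2$, with $\eps_0>0$ coming from absorbing part of $\intb fv$), and now the upper bound of Lemma~\ref{lem:nabla_c} applies with the correct sign. Its $-\intb v^2$ contribution then cancels the surviving $+\tfrac12\intb v^2$, and Lemmata~\ref{lem:u_g} and~\ref{lem:hu} handle the $g$- and $(H(u))_+$-terms as you anticipated. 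The term $\intb fv$ is precisely where the exponent $\tfrac{2n+4}{n+4}$ arises: H\"older and Young give $\intb fv \le c\|f\|_{L^2}^{(2n+4)/(n+4)} + \eps\|v\|_{L^2}^{(2n+4)/n}$, and the latter is controlled by $\eps\intb|\nabla v|^2+\eps$ via the Gagliardo--Nirenberg inequality together with the mass bound $\intom v\le M_v$.
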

\begin{proof}
  We let $\eta_2 \in (0, 1)$ be as given by Lemma~\ref{lem:hu} and choose $\eta_1 \in (0, \eta_2)$, $\mu \in (0, 1)$ and $\eps_0 \in (0, 1)$ so small that
  \begin{align}\label{eq:cond_eps0}
    \frac{(1 + \eps_0)(1 - \eta_2 + \eta_1)}{1-\mu}  \le 1.
  \end{align}
  According to the Gagliardo--Nirenberg inequality, there is $c_1 > 0$ such that
  \begin{align}\label{eq:gni}
        \|\varphi\|_{L^2(B_{r_0})}^\frac{2n+4}{n}
    \le c_1 \|\nabla \varphi\|_{L^2(B_{r_0})}^2 \|\varphi\|_{L^1(B_{r_0})}^\frac4n + c_1 r_0^{-n-2} \|\varphi\|_{L^1(B_{r_0})}^\frac{2n+4}{n}
    \qquad \text{for all $\varphi \in W^{1, 2}(B_{r_0})$ and all $r_0 > 0$}.
  \end{align}
  (A scaling argument shows that $c_1$ does, indeed, not depend on $r_0$.)
  By \eqref{eq:f}, an integration by parts, Hölder's inequality, Young's inequality, \eqref{eq:gni} and \eqref{eq:l1_bdd}, we obtain
  \begin{align*}
    \intb uv
    &= \intb fv+ \intb |\nabla v|^2 +\intb v^2-\int_{\pa B_{r_0}} \pa_{\nu} v \cdot v\\\nn
    &\le c_2 \|f\|_{L^2(B_{r_0})}^{\frac{2n+4}{n+4}} + \frac{\eps_0}{2c_1 M_v^{4/n}} \|v\|_{L^2(B_{r_0})}^\frac{2n+4}{n} + \intb |\nabla v|^2+\intb v^2- \omega_n r_0^{n-1}v_r(r_0) v(r_0)\\
    &\le \left(1+ \frac{\eps_0}{2}\right) \intb |\nabla v|^2 +\intb v^2+c_2\norm[L^2(\Omega)]{f}^{\frac{2n+4}{n+4}}+\frac{\eps_0M_v^2 r_0^{-n-2}}{2}-\omega_n r_0^{n-1}v_r(r_0) v(r_0)
  \end{align*}
  for some $c_2 > 0$, all $(u, v) \in \mc S$ and all $r_0\in(0, r_\star]$.
  Together with Lemma~\ref{lem:nabla_c}, Lemma~\ref{lem:u_g}, Lemma~\ref{lem:hu}, \eqref{eq:cond_eps0}, Young's inequality, \eqref{eq:upper_v} and \eqref{eq:params_for_S}, this yields
  \begin{align}
    &\pe \intb uv-\frac 12\intb |\nabla v|^2-\frac 12 \intb v^2-\intb G(u) \nn\\[0.3em]
    &\le \frac{1+\eps_0}{2} \intb |\nabla v|^2 +\frac 12\intb v^2
    +c_2\norm[L^2(\Omega)]{f}^{\frac{2n+4}{n+4}}-\intb G(u)+M_v^2 r_0^{-n-2}-\omega_n r_0^{n-1}v_r(r_0) v(r_0) \nn\\[0.3em]
    &\le  \frac{(1+\eps_0)n\omega_n}{1-\mu} \intbr r^{n-1} \ell(r) (H(u))_+ \dr
          + \frac12 \left(1 - \frac{1+\eps_0}{1-\mu} \right) \intb v^2 \nn\\[0.3em]
    &\pe  +\, c_2 \|f\|_{\leb2}^\frac{2n+4}{n+4}
          + \frac{c_3(1+\eps_0) r_0}{2\mu(1-\mu)} \|f\|_{\leb2}^2
          + \frac{(1+\eps_0)\omega_n}{1-\mu} \intbr r^n \ell(r) \frac{u}{\sqrt{\psi(u)}} g \dr
          - \intb G(u) \nn\\[0.3em]
    &\pe  +\, \frac{(1+\eps_0)\omega_n}{1-\mu} r_0^n \ell(r_0) \big(v^2(r_0) + 2|H(0)| - v_r^2(r_0)\big)
          + M_v^2 r_0^{-n-2}
          - \omega_n r_0^{n-1}v_r(r_0) v(r_0) \nn \\[0.3em]
    &\le  \left( \frac{(1 + \eps_0)(1 - \eta_2 + \eta_1)}{1-\mu} - 1 \right) \intb G(u)
          + c_2 \|f\|_{\leb2}^\frac{2n+4}{n+4}
          + \frac{\hat c_3 r_0}{2\mu} \|f\|_{\leb2}^2
          + \hat c_4 r_0^{\delta_0} \|g\|_{\leb2}^2 \nn\\[0.3em]
    &\pe  +\, \frac{(1+\eps_0)\omega_n}{1-\mu} r_0^n v_r^2(r_0) \big(\ell(r_\star) - \ell(r_0)\big)
          + \left(c_6 + \frac{\omega_n(1-\mu)}{4\ell(r_\star)(1+\eps_0)}\right) r_0^{n-2} v^2(r_0) \nn\\[0.3em]
    &\pe  +\, \hat c_4
          + \hat c_5
          + 2c_6r_0^{n-2} |H(0)|
          + M_v^2 r_0^{-n-2} \nn\\[0.3em]
    &\le  c_2 \|f\|_{\leb2}^\frac{2n+4}{n+4}
          + \frac{\hat c_3}{2\mu} r_0^{\delta_0} \|f\|_{\leb2}^2
          + \hat c_4 r_0^{\delta_0} \|g\|_{\leb2}^2 \nn\\[0.3em]
    &\pe  +\, \left[ B^2 \left(c_6 + \frac{\omega_n(1-\mu)}{4\ell(r_\star)(1+\eps_0)}\right) 
            + \hat c_4
            + \hat c_5
            + 2c_6r_\star^{n-2} |H(0)|
            + M_v^2
          \right] r_0^{n-2-2\kappa}
  \end{align}
  for all $(u, v) \in S$ and all $r_0\in(0, r_\star]$,
  where $c_3, c_4, c_5$ are the constants given by Lemma~\ref{lem:nabla_c}, Lemma~\ref{lem:u_g} and Lemma~\ref{lem:hu}, respectively,
  $\hat c_i \defs \frac{(1+\eps_0)c_i}{1-\mu}$ for $i \in \{3, 4, 5\}$,
  $\ell$ is as in \eqref{eq:def_ell},
  $\delta_0$ is as given by Lemma~\ref{lem:u_g},
  and $c_6 \defs \sup_{r \in (0, r_\star)} \frac{(1+\eps_0)\omega_n}{1-\mu}r^2 \ell(r)$.
  Thus, \eqref{eq:lem:F_inner_higher} holds for an evident choice of $C$.
\end{proof}

In contrast to the inner region treated above,
suitably estimating $\mathcal{F}(u, v)$ in the outer region $\Omega \setminus B_{r_0}$ can be achieved by just relying on \eqref{eq:l1_bdd} and \eqref{eq:upper_v}.
\begin{lemma}\label{lem:F_outer}
  Assume \eqref{eq:ass_fd} and \eqref{eq:params_for_S}.
  Then 
	\begin{align}
		\int_{\Omega \setminus B_{r_0}} uv-\frac 12\int_{\Omega \setminus B_{r_0}} |\nabla v|^2-\frac 12 \int_{\Omega \setminus B_{r_0}} v^2-\int_{\Omega \setminus B_{r_0}} G(u)
    \le BM_u r_0^{-\kappa}
	\end{align}
  for all $(u, v) \in \mc S$ and all $r_0\in(0, R)$.
\end{lemma}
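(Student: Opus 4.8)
The plan is to discard the three manifestly nonpositive contributions and to bound the remaining coupling term by a crude pointwise estimate. First I would record that $G \ge 0$: from the definition \eqref{eq:cond:G_def}, for $s \ge s_0$ the double integral of the positive integrand $\tfrac{\phi}{\psi}$ over $[s_0, s]$ is nonnegative, while for $s < s_0$ both integration signs flip and the product is again nonnegative. Consequently $-\tfrac12\int_{\Omega\setminus B_{r_0}} |\nabla v|^2$, $-\tfrac12\int_{\Omega\setminus B_{r_0}} v^2$ and $-\int_{\Omega\setminus B_{r_0}} G(u)$ are all $\le 0$, so the left-hand side of the claimed inequality is bounded above by $\int_{\Omega\setminus B_{r_0}} uv$.

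Next I would exploit the pointwise bound \eqref{eq:upper_v}. Since $\kappa > n-2 \ge 0$ by \eqref{eq:params_for_S}, and in fact $\kappa > 0$ in every dimension $n \ge 2$, the function $r \mapsto r^{-\kappa}$ is nonincreasing on $(0, R)$, so \eqref{eq:upper_v} gives $v(r) \le B r^{-\kappa} \le B r_0^{-\kappa}$ for all $r \in [r_0, R)$, i.e.\ $v \le B r_0^{-\kappa}$ throughout $\Omega\setminus B_{r_0}$. Combining this with the nonnegativity of $u$ and the mass identity in \eqref{eq:l1_bdd} yields
\[
  \int_{\Omega\setminus B_{r_0}} uv \le B r_0^{-\kappa} \int_{\Omega\setminus B_{r_0}} u \le B r_0^{-\kappa} \intom u = B M_u r_0^{-\kappa},
\]
which is precisely the assertion.

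I do not anticipate any real obstacle here: the lemma is a bookkeeping statement isolating the (harmless) outer-region part of $\mc F$, and beyond ``drop the nonpositive terms'' the only ingredients are the sign of $G$ and the positivity of $\kappa$, both immediate from the standing assumptions \eqref{eq:ass_fd} and \eqref{eq:params_for_S}. In particular no regularity or smallness of $r_0$ is used, which is why the estimate holds for all $r_0 \in (0, R)$ rather than only for $r_0 \le r_\star$.
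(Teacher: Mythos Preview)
Your proof is correct and follows exactly the same approach as the paper's: drop the three nonpositive terms, then bound $\int_{\Omega\setminus B_{r_0}} uv$ via the pointwise estimate \eqref{eq:upper_v} together with the mass identity in \eqref{eq:l1_bdd}. Your added justification that $G\ge 0$ and $\kappa>0$ is a bit more explicit than the paper (which simply cites the codomain $[0,\infty)$ in \eqref{eq:cond:G_def} and \eqref{eq:params_for_S}), but the argument is identical.
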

\begin{proof}
  By dropping some nonpositive terms and applying \eqref{eq:upper_v} and \eqref{eq:l1_bdd}, we see that
	\begin{align*}
	    \int_{\Omega\backslash B_{r_0}} \left( uv-\frac 12 |\nabla v|^2-\frac 12 v^2- G(u) \right)
	 \le \sup_{r \in (r_0, R)} v(r) \cdot \int_{\Omega\backslash B_{r_0}} u
	\le B\sup_{r \in (r_0, R)} r^{-\kappa} \cdot \into u
  =   BM_u r_0^{-\kappa}
	\end{align*}
  for all $(u, v) \in \mc S$ and all $r_0 \in (0, R)$.
\end{proof}

Finally, by combining Lemma~\ref{lem:F_inner} and \ref{lem:F_outer} for appropriately chosen $r_0$ we are able to prove Lemma~\ref{lm:F_ge_C_theta},
that is, the estimate $\mc F(u, v) \ge -C (\mc D^\gamma(u, v) + 1)$ for some $C > 0$ and $\gamma \in (0, 1)$.
\begin{proof}[Proof of Lemma~\ref{lm:F_ge_C_theta}]
  We let $r_0 \defs \min\left\{ r_\star, \big(\mc D(u, v)\big)^{-\frac{1}{2\kappa+\delta_0-(n-2)}}\right\}$,
  where we recall that $r_\star = \min\{\frac{1}{\ure}, \frac{R}{2}\}$ and $\mc D(u, v) = \norm[L^2(\Om)]{f}^2+\norm[L^2(\Om)]{g}^2$.
  Then Lemma~\ref{lem:F_inner} and Lemma~\ref{lem:F_outer} imply that there is $c_1>0$ such that
	\begin{align}
	\nn
		-\F(u,v) &=\into uv-\frac12 \into|\nabla v|^2-\frac 12 \into v^2-\into G(u)\\ \nn
		&\le c_1\norm[L^2(\Om)]{f}^{\frac{2n+4}{n+4}}+c_1 r_0^{\delta_0}\norm[L^2(\Om)]{f}^2 +
		c_1 r_0^{\delta_0}\norm[L^2(\Om)]{g}^2 +c_1r_0^{n-2-2\kappa}+c_1  r_0^{-\kappa}\\
	 &\le  c_1 \left( \big(\mc D(u, v)\big)^{\frac{n+2}{n+4}}
		 +r_0^{\delta_0}\big(\mc D(u, v)\big) + 2r_0^{n-2-2\kappa} \right)
    \qquad \text{for all $(u, v) \in \mc S$.}
	\end{align}
	If $ r_\star \le \big(\mc D(u, v)\big)^{-\frac{1}{2\kappa+\delta_0-(n-2)}}$, then $r_0 = r_\star$  and hence
	\begin{align*}
		-\F(u, v) \le c_1 \left( \big(\mc D(u, v)\big)^{\frac{n+2}{n+4}}+ \big(\mc D(u, v)\big)^{\frac{2\kappa-(n-2)}{2\kappa+\delta_0-(n-2)}}+ 2 r_\star^{n-2-2\kappa}\right)
    \qquad \text{for all $(u, v) \in \mc S$},
	\end{align*}
	whereas if $r_\star > \big(\mc D(u, v)\big)^{-\frac{1}{2\kappa+\delta_0-(n-2)}}$, then $r_0= \big(\mc D(u, v)\big)^{-\frac{1}{2\kappa+\delta_0-(n-2)}}$ and
	\begin{align*}
		-\F(u,v) \le c_1 \left(\big(\mc D(u, v)\big)^{\frac{n+2}{n+4}}+ 3 \big(\mc D(u, v)\big)^{\frac{2\kappa-(n-2)}{2\kappa+\delta_0-(n-2)}}\right)
    \qquad \text{for all $(u, v) \in \mc S$}.
	\end{align*}
	Using the fact that $a^x+a^y\le 2 a^{\max\{x,y\}}+1$ for all $a>0$ and all $x,y > 0$,
  we conclude \eqref{eq:F_ge_C_theta:est} for $C \defs \max\{6, 1+ 2 r_\star^{n-2-2\kappa}\} c_1$ and $\gamma \defs \max\{\frac {n+2}{n+4},\frac{2\kappa-(n-2)}{2\kappa+\delta_0-(n-2)}\}\in(0,1)$.
\end{proof}

\section{Initial data leading to blow-up}\label{sec:ftbu_init}
Our next goal, to be achieved in Lemma~\ref{lm:u_v_in_S} below, is to construct families of solutions $(u, v)$ all belonging to the same set $\mc S$ defined in \eqref{eq:def_S}
but being such that the energy of their initial data is not uniformly bounded.

Throughout this section, we let
\begin{align}\label{eq:ass_fd_init}
  \begin{cases}
    n \ge 2,\, R > 0,\, \Omega \defs B_R(0) \subset \R^n, \\
    \phi, \psi \in C^2([0, \infty) \text{ such that \eqref{eq:cond:phi_psi_pos} holds}, \\
    K_{\phi, i}, K_{\psi, i} > 0,\, m \in \R,\, 0 < q_1 \le q_2,\, \maxmq \text{ as in \eqref{eq:cond:m_q1q2_main} and } s_0 > 1 \text{ such that} \\
    \text{\eqref{eq:cond:phi_psi_mq} and \eqref{eq:cond:m_q1q2_main} as well as \eqref{eq:cond:G} (if $n = 2$) or \eqref{eq:cond:G_higher} (if $n \ge 3$) hold}
  \end{cases}
\end{align}
and we first recall a local existence result.
\begin{lemma}\label{lm:local_ex}
  Assume \eqref{eq:ass_fd_init} and let $u_0, v_0 \in \con\infty$ be nonnegative and radially symmetric.
  Then there exist $\tmax \in (0, \infty]$ and a unique pair of nonnegative, radially symmetric functions $(u, v) \in C^\infty(\Ombar \times [0, \tmax))$ solving \eqref{prob:proto} classically
  with the property that if $\tmax < \infty$, then $\limsup_{t \nea \tmax} \|u(\cdot, t)\|_{\leb\infty} = \infty$.
\end{lemma}
\begin{proof}
  The unique existence of nonnegative solutions follows from \cite[Theorem~14.4, Corollary~14.7, Theorem~15.1]{AmannNonhomogeneousLinearQuasilinear1993},
  radial symmetry follows due to uniqueness,
  and the extensibility criterion can be shown as in \cite{CieslakGlobalExistenceSolutions2008}, for instance.
\end{proof}

Since the functions constructed in the following lemma are smooth,
it is not necessary to include a local existence theory for less regular initial data here; Lemma~\ref{lm:local_ex} suffices.
\begin{lemma}\label{lm:ex_init}
  Assume \eqref{eq:ass_fd_init} and let $M_u > 0$.
  Then there exist $\eta_0 > 0$, a family $(u_\eta, v_\eta)_{\eta \in (0, \eta_0)} \in (C^\infty(\Ombar))^2$ of (pairs of) positive, radially symmetric functions,
  $p \in (1, \frac{n}{n-1})$, $\beta \in (\frac np, n)$,
  \begin{align}\label{eq:ex_init:cond_alpha}
    \alpha \in
    \begin{cases}
      {(n, \infty)\vphantom{\left(\frac2q\right)} }, & \maxmq \ge 1, \\
      \left(\frac{n\beta}{(n(m-q_2) + 1)_+}, \frac{2}{\onemaxmq}\right), & \maxmq < 1,
    \end{cases}
  \end{align}
  and $L > 0$ with the following properties:
  For all $\eta \in (0, \eta_0)$,
  \begin{align}\label{eq:ex_init:u_nabla_v_pw_v_w1p}
    \intom u_\eta = M_u, \quad
    \||x|^\alpha u_\eta\|_{\leb\infty} \le L, \quad
    \|v_\eta\|_{\sob1{p}} \le L
    \quad \text{and} \quad
    \||x|^\beta v_\eta\|_{\sob1{\infty}} \le L
  \end{align}
  hold, and the functional $\mc F$ defined in \eqref{eq:def_F} satisfies
  \begin{align}\label{eq:ex_init:F_-infty}
    \mc F(u_\eta, v_\eta) \to -\infty \qquad \text{as $\eta \sea 0$}.
  \end{align}
\end{lemma}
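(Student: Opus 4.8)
The plan is to construct the family $(u_\eta, v_\eta)$ by an explicit ansatz of self-similar type, concentrating mass near the origin as $\eta \sea 0$ while keeping a controlled singular profile. For the first component, I would take $u_\eta$ to essentially be a smoothed version of $\eta^{-n\alpha/\text{(something)}} U(x/\eta)$ for a fixed profile function, or more flexibly, functions of the form $u_\eta(x) = c_\eta (|x|^2 + \eta^2)^{-\alpha/2}$ cut off so as to be smooth and have $\intom u_\eta = M_u$; the normalization constant $c_\eta$ is then determined by the mass constraint and stays bounded (indeed converges to a positive constant) precisely because $\alpha < n$, so that $|x|^{-\alpha}$ is integrable. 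This simultaneously gives $\||x|^\alpha u_\eta\|_{L^\infty} \le L$ for a uniform $L$. The choice of $\alpha$ in \eqref{eq:ex_init:cond_alpha} must be compatible: in the case $\maxmq < 1$ the lower bound $\frac{n\beta}{(n(m-q_2)+1)_+}$ and the upper bound $\frac{2}{\onemaxmq}$ leave a nonempty interval precisely by \eqref{eq:cond:m_q1q2_main} after choosing $\beta$ close enough to $\frac np$ and $p$ close enough to $\frac{n}{n-1}$; I would verify this elementary inequality chain first, since everything downstream depends on it.

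For the second component, the natural choice is to let $v_\eta$ solve the elliptic problem $-\Delta v_\eta + v_\eta = u_\eta$ with Neumann boundary conditions (so $\partial_\nu v_\eta = 0$ automatically), or simply to take $v_\eta$ to be a fixed convenient radial function dominating the Newtonian-type potential of $u_\eta$. From elliptic regularity and the uniform $L^1$ bound on $u_\eta$ together with the pointwise bound $u_\eta \lesssim |x|^{-\alpha}$, one gets $\|v_\eta\|_{W^{1,p}}$ bounded for $p < \frac{n}{n-1}$ (here the subcriticality $p < \frac{n}{n-1}$ is exactly what makes $\nabla(-\Delta+1)^{-1}$ map $L^1$ into $L^p$), and the weighted bound $\||x|^\beta v_\eta\|_{W^{1,\infty}} \le L$ follows from a local Schauder/scaling estimate exploiting that $u_\eta$ behaves like $|x|^{-\alpha}$ near $0$ with $\alpha$ chosen so that $\beta < n$ absorbs the singularity. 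These are the four bounds in \eqref{eq:ex_init:u_nabla_v_pw_v_w1p}.

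The crucial and genuinely substantive step is \eqref{eq:ex_init:F_-infty}: showing $\mc F(u_\eta, v_\eta) \to -\infty$. I would estimate each term of $\mc F(u_\eta,v_\eta) = \frac12\intom|\nabla v_\eta|^2 + \frac12\intom v_\eta^2 - \intom u_\eta v_\eta + \intom G(u_\eta)$ separately. The cross term $-\intom u_\eta v_\eta$ is the driving negative contribution: since $v_\eta$ is comparable to the potential of $u_\eta$, this behaves like $-\intom\intom \frac{u_\eta(x)u_\eta(y)}{|x-y|^{n-2}}$ (or the 2D logarithmic analogue), which diverges to $-\infty$ as $\eta\sea0$ at a rate governed by the concentration scale. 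Meanwhile $\frac12\intom|\nabla v_\eta|^2 = \frac12\intom u_\eta v_\eta - \frac12\intom v_\eta^2$ after integrating by parts, so the quadratic-in-$v$ terms partially cancel the cross term, leaving $\mc F(u_\eta,v_\eta) = -\frac12\intom u_\eta v_\eta + \intom v_\eta^2$-type corrections $+\intom G(u_\eta)$. The point is that the $G(u_\eta)$ term grows more slowly than $\intom u_\eta v_\eta$: using \eqref{eq:cond:G} (if $n=2$) or \eqref{eq:cond:G_higher} (if $n\ge3$), $G(s) \le a s^{2-\theta}$ with $\theta > \frac2n$, so $\intom G(u_\eta) \lesssim \intom u_\eta^{2-\theta} \lesssim \int_0^R r^{n-1-\alpha(2-\theta)}\dr \cdot c_\eta^{2-\theta}$, and one checks this power of the concentration scale is strictly smaller than the (negative) power coming from the self-interaction energy — this is where the inequality $\alpha > \frac{n\beta}{(n(m-q_2)+1)_+}$ and the relation between $\alpha$, $\theta$, and $n$ get used. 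The main obstacle is this balancing: making the divergence of the self-interaction strictly dominate $\intom G(u_\eta)$ uniformly, which requires choosing the concentration rate of $u_\eta$ (as a function of $\eta$) carefully and tracking all scaling exponents, and in the 2D case handling the logarithm in \eqref{eq:cond:G} rather than a clean power. I expect most of the technical work to be bookkeeping of these exponents, together with making the cutoffs near $\partial\Omega$ and the smoothing harmless.
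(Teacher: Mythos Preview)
There is a genuine gap: you assert that the normalization constant $c_\eta$ ``stays bounded (indeed converges to a positive constant) precisely because $\alpha < n$''. But the lemma requires $\alpha > n$: this is explicit in the first case of \eqref{eq:ex_init:cond_alpha}, and in the second case the lower endpoint satisfies $\frac{n\beta}{(n(m-q_2)+1)_+} > \frac{n\beta}{n-1} > n$ because \eqref{eq:cond:m_q2} gives $n(m-q_2)+1 < n-1$ and because $\beta > \frac{n}{p} > n-1$. With $\alpha > n$ the behaviour of your profile is the opposite of what you describe: since $\int_\Omega (|y|^2+\eta^2)^{-\alpha/2}\,dy \sim c\,\eta^{\,n-\alpha}\to\infty$, the normalization $c_\eta$ tends to \emph{zero} like $\eta^{\alpha-n}$, and $u_\eta$ concentrates to $M_u\delta_0$. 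This is not a cosmetic issue. It is precisely this concentration that forces $\int_\Omega u_\eta v_\eta$ to diverge (to order $\eta^{2-n}$ when $n\ge 3$, logarithmically when $n=2$) and hence $\mathcal F(u_\eta,v_\eta)\to -\infty$. If instead $\alpha<n$, then $u_\eta$ converges to the fixed integrable function $M_u a_0|x|^{-\alpha}$ with $a_0>0$, the self-interaction energy stays bounded in many cases, and you would not be producing an $\alpha$ in the interval the lemma demands in the first place. The pointwise bound $\||x|^\alpha u_\eta\|_{L^\infty}\le L$ still holds when $\alpha>n$, but for a different reason than you give: $c_\eta$ is increasing in $\eta$ and hence bounded by $c_{\eta_0}$, not because $|x|^{-\alpha}$ is integrable.

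A secondary difference: the paper does not take $v_\eta$ to be the Neumann solution of $-\Delta v_\eta+v_\eta=u_\eta$ but instead uses explicit functions (a damped logarithm in $n=2$, a scaled power $\eta^{\delta-\gamma}(|x|^2+\eta^2)^{-\delta/2}$ for $n\ge 3$) for which the weighted $W^{1,\infty}$ and $W^{1,p}$ bounds can be checked by direct computation, and then invokes \cite[Lemma~4.1]{WinklerDoesVolumefillingEffect2009} for the energy divergence. Your elliptic-solution route might be made to work, but verifying the uniform bound $\||x|^\beta v_\eta\|_{W^{1,\infty}}\le L$ all the way down to the origin (where $u_\eta$ has height $\sim\eta^{-n}$) is not as immediate as you suggest, and the explicit ansatz avoids this.
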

\begin{proof}
  We follow the construction in \cite[Lemma~4.1]{WinklerDoesVolumefillingEffect2009} but also make sure that the quantities appearing in \eqref{eq:ex_init:u_nabla_v_pw_v_w1p} are bounded uniformly in $\eta$.

  If $\maxmq \ge 1$, we take arbitrary $p \in (1, \frac{n}{n-1})$, $\beta \in (\frac np, n)$, $\alpha > n$.
  Else, the key condition \eqref{eq:cond:m_q1q2_main} implies $\frac{n(n-1)}{(n(m-q_2) + 1)_+}< \frac{2}{\onemaxmq}$,
  so that we may fix $p \in (1, \frac{n}{n-1})$, $\beta \in (\frac np, n)$ and $\alpha$ satisfying \eqref{eq:ex_init:cond_alpha}.
  This entails $n(m-q_2) + 1 > 0$, so that \eqref{eq:cond:m_q2} asserts $\alpha > \frac{n \beta}{n-1} > n$.

  For $0 < \eta < \eta_0 \defs \min\{1, \frac R2\}$, we then set
  \begin{align*}
    u_\eta(x) = M_u a_\eta (|x|^2 + \eta^2)^{-\frac{\alpha}{2}}
    \quad \text{for $x \in \Ombar$,  where} \quad
    a_\eta \defs \left(\intom (|y|^2 + \eta^2)^{-\frac{\alpha}{2}} \dy\right)^{-1}.
  \end{align*} 
  The identity $\intom u_\eta = M_u$ as well as positivity and smoothness of $u_\eta$ for all $\eta \in (0, \eta_0)$ follow directly from the definition.
  Moreover,
  \begin{align}\label{eq:ex_init:u_pw}
        u_\eta(x)
    \le M_u a_\eta (|x|^{2})^{-\frac{\alpha}{2}}
    \le M_u a_{\eta_0} |x|^{-\alpha}
    \qquad \text{holds for all $x \in \Omega \setminus \{0\}$ and all $\eta \in (0, \eta_0)$.}
  \end{align}

  If $n = 2$, we let $\gamma \in (0, 1-\theta)$ (and recall that $\theta \in (0, 1)$ is such that \eqref{eq:cond:G} holds)
  and set
  \begin{align*}
    v_\eta(x) = \left( \ln \frac{R}{\eta} \right)^{-\gamma} \ln \frac{2R^2}{|x|^2 + \eta^2}
    \qquad \text{for all $x \in \Ombar$ and all $\eta \in (0, \eta_0)$}.
  \end{align*} 
  For all $\tilde \beta > 1$, we see that there is $c_1 > 0$ such that
  \begin{align*}
        \|r^{\tilde \beta-1} v_\eta\|_{L^\infty((0, R))}
    =   \sup_{r \in (0, R)} r^{\tilde \beta - 1} \left( \ln \frac{R}{\eta} \right)^{-\gamma} \ln \frac{2R^2}{r^2 + \eta^2}
    \le \sup_{r \in (0, R)} r^{\tilde \beta - 1} \left( \ln \frac{R}{\eta_0} \right)^{-\gamma} \ln \frac{2R^2}{r^2}
    \le c_1
  \end{align*}
  and
  \begin{align*}
        \|r^{\tilde \beta} v_{\eta r}\|_{L^\infty((0, R))}
    =   \sup_{r \in (0, R)} \left(r^{\tilde \beta} \left(\ln\frac{R}{\eta}\right)^{-\gamma} \frac{2r}{r^2+\eta^2} \right)
    \le 2 \sup_{r \in (0, R)} r^{\tilde \beta-1} \left(\ln\frac{R}{\eta}\right)^{-\gamma}
    =   2 R^{\tilde \beta-1} \left(\ln\frac{R}{\eta_0}\right)^{-\gamma}
  \end{align*}
  for all $\eta \in (0, \eta_0)$.

  If $n \ge 3$, we let $\delta > n-1 > \frac{n}{2}$ and $\gamma\in ((1-\theta)n, n-2)$ (where $\theta \in (\frac2n, 1)$ is such that \eqref{eq:cond:G_higher} holds)
  and set
  \begin{align*}
    v_\eta(x) = \eta^{\delta-\gamma} (|x|^2+\eta^2)^{-\frac{\delta}{2}}
    \qquad \text{for $x \in \Ombar$ and $\eta \in (0, \eta_0)$}.
  \end{align*} 
  For all $\tilde{\beta} \in (n-1, n)$, we have $\tilde \beta - 1 \in (\gamma, \delta)$ and hence
  \begin{align*}
          \|r^{\tilde \beta-1} v_\eta\|_{L^\infty((0, R))}
    &=    \sup_{r \in (0, R)}  \eta^{\delta-\gamma}  r^{\tilde \beta - 1} (r^2 + \eta^2)^{-\frac{\delta}{2}} \\
    &=    \sup_{r \in (0, R)}  \eta^{\delta-\gamma} \cdot \frac{r^{\tilde \beta - 1}}{(r^2+\eta^2)^{\frac{\tilde \beta - 1}{2}}} \cdot \frac{1}{(r^2+\eta^2)^{\frac{\delta-(\tilde \beta - 1)}{2}}}
     \le  \eta^{\delta-\gamma} \cdot 1 \cdot \eta^{\tilde \beta - 1-\delta}
     \le  \eta_0^{\tilde \beta - 1 - \gamma}
  \end{align*} 
  as well as
  \begin{align*}
      \|r^{\tilde \beta} v_{\eta r}\|_{L^\infty((0, R))}
    &=   \sup_{r \in (0, R)}  \delta \eta^{\delta-\gamma} r^{\tilde \beta+1}  (r^2+\eta^2)^{-\frac{\delta}{2}-1} \\
    &=   \sup_{r \in (0, R)}  \delta \eta^{\delta-\gamma} \cdot \frac{r^{\tilde \beta + 1}}{(r^2+\eta^2)^{\frac{\tilde \beta + 1}{2}}} \cdot \frac{1}{(r^2+\eta^2)^{\frac{\delta- (\tilde \beta - 1)}{2}}}
     \le \delta \eta^{\delta-\gamma} \cdot 1 \cdot \eta^{\tilde \beta - 1-\delta}
     \le \delta \eta_0^{\tilde \beta - 1 - \gamma}
  \end{align*}
  for all $\eta \in (0, \eta_0)$.

  In both cases, we see that $v_\eta$ is positive and smooth for all $\eta \in (0, \eta_0)$ and that $(|x|^\beta v_\eta)_{\eta \in (0, \eta_0)}$ is uniformly bounded in $\sob1\infty$.
  Moreover, choosing an arbitrary $\tilde \beta \in (n-1, \frac np)$, we have $n-1 - p \tilde \beta > -1$ and hence there exists $c_2 > 0$ such that
  \begin{align*}
          \intom v_\eta^p + \intom |\nabla v_\eta|^p 
    &=    \omega_n \int_0^R r^{n-1} \big( v_\eta^p + |v_{\eta r}|^p \big) \dr \\
    &\le  \omega_n \left(R^p \|r^{\tilde \beta-1} v_{\eta}\|_{L^\infty((0, R))}^p + \|r^{\tilde \beta} v_{\eta r}\|_{L^\infty((0, R))}^p \right) \int_0^R r^{n-1 - p \tilde \beta} \dr
    \le   c_2 
  \end{align*}
  for all $\eta \in (0, \eta_0)$.
  Together with \eqref{eq:ex_init:u_pw}, these estimates imply \eqref{eq:ex_init:u_nabla_v_pw_v_w1p} for some $L > 0$ not depending on $\eta$.

  Finally, the proof of \cite[Lemma~4.1]{WinklerDoesVolumefillingEffect2009}, which makes use of the fact that $\alpha > n$ as well as of \eqref{eq:cond:G} and \eqref{eq:cond:G_higher}, shows \eqref{eq:ex_init:F_-infty}.
\end{proof}

We now show that solutions emanating from the initial data constructed in Lemma~\ref{lm:ex_init} indeed belong to the same set $\mc S$ defined in \eqref{eq:def_S}.
To that end, we make crucial use of the pointwise upper estimates results (from \cite{FuestBlowupProfilesQuasilinear2020}; see Appendix~\ref{sec:pw} below).
\begin{lemma}\label{lm:u_v_in_S}
  Assume \eqref{eq:ass_fd_init} and let $M_u > 0$.
  Then we can find $M_v, A, B, \alpha, \kappa$ satisfying \eqref{eq:params_for_S}
  such that with $\eta_0$ and $(u_\eta, v_\eta)_{\eta \in (0, \eta_0)}$ given by Lemma~\ref{lm:ex_init},
  for all $\eta \in (0, \eta_0)$ the solution $(u, v)$ of \eqref{prob:proto} with initial data $u_0 = u_\eta$ and $v_0 = v_\eta$ and maximal existence time $\tmax$ given by Lemma~\ref{lm:local_ex}
  satisfies $(u(\cdot, t), v(\cdot, t)) \in \mc S = \mc S(M_u, M_v, A, B, \alpha, \kappa)$ for all $t \in (0, \tmax)$, where $\mc S$ is as in \eqref{eq:def_S}.
\end{lemma}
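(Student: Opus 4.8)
The plan is to fix $\eta\in(0,\eta_0)$ together with the associated solution $(u,v)$ and to check, for every $t\in(0,\tmax)$, the five requirements that make up membership of $(u(\cdot,t),v(\cdot,t))$ in one fixed set $\mc S(M_u,M_v,A,B,\alpha,\kappa)$: the $C^2_{\mathrm{rad},\mathrm{pos}}$-regularity and the condition $\pa_\nu v=0$, the mass identity $\intom u=M_u$, the mass bound $\intom v\le M_v$, the pointwise bound \eqref{eq:upper_u} for $u$ (needed only when $\maxmq<1$), and the pointwise bound \eqref{eq:upper_v} for $v$. The first three follow from elementary identities; for the last two one uses the pointwise estimates of Appendix~\ref{sec:pw}, fed with the bounds \eqref{eq:ex_init:u_nabla_v_pw_v_w1p}, which are \emph{uniform in $\eta$}. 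Once all constants below are seen to be $\eta$- and $t$-independent, one reads off $M_v,A,B$ from these estimates, keeps $\alpha,p,\beta$ from Lemma~\ref{lm:ex_init}, and sets $\kappa\defs\beta-1$.

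First I would record the elementary facts. By Lemma~\ref{lm:local_ex}, $(u,v)$ is nonnegative, radially symmetric, of class $C^\infty(\Ombar\times[0,\tmax))$ and satisfies $\pa_\nu v=0$ on $\pa\Omega$, so $u(\cdot,t),v(\cdot,t)\in C^2_{\mathrm{rad},\mathrm{pos}}(\Ombar)$ with the required boundary condition for every $t$. Integrating the first equation in \eqref{prob:proto} over $\Omega$ and using the no-flux condition gives $\ddt\intom u=0$, hence $\intom u(\cdot,t)=\intom u_\eta=M_u$. Integrating the second equation then gives $\ddt\intom v=M_u-\intom v$, so $\intom v(\cdot,t)$ lies between $\intom v_\eta$ and $M_u$; in particular $\intom v(\cdot,t)\le\max\{\intom v_\eta,M_u\}$. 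Since $\intom v_\eta\le|\Omega|^{1-\frac1p}\|v_\eta\|_{\leb p}\le|\Omega|^{1-\frac1p}L$ by \eqref{eq:ex_init:u_nabla_v_pw_v_w1p}, the choice $M_v\defs\max\{|\Omega|^{1-\frac1p}L,M_u\}$ --- depending only on $n,R,p,L,M_u$ --- makes the second part of \eqref{eq:l1_bdd} hold for all $\eta$ and $t$.

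Next I would invoke Appendix~\ref{sec:pw}. The parameters $p\in(1,\frac n{n-1})$, $\beta\in(\frac np,n)$ and $\alpha$ from \eqref{eq:ex_init:cond_alpha} were fixed in Lemma~\ref{lm:ex_init} (with \eqref{eq:cond:m_q2} in mind) precisely so that $(\alpha,\beta,p)$ falls into the admissible range of those estimates, and \eqref{eq:ex_init:u_nabla_v_pw_v_w1p} supplies the required hypotheses on the initial data with an $\eta$-independent $L$. Appendix~\ref{sec:pw} thus yields $C_1>0$ --- depending only on $n$, $R$, the structural data $K_{\phi,i},K_{\psi,i},m,q_1,q_2,s_0$, and on $M_u,L,\alpha,\beta,p$, but \emph{not} on $\eta$, $t$ or $\tmax$ --- such that $|v_r(r,t)|\le C_1r^{-\beta}$ and $v(R,t)\le C_1$ for all $r\in(0,R)$ and $t\in(0,\tmax)$, and, if $\maxmq<1$, additionally $u(r,t)\le C_1r^{-\alpha}$ (the lower bound on $\alpha$ in \eqref{eq:ex_init:cond_alpha} being exactly what is needed for this). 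Since $\beta>\frac np>n-1\ge1$, integrating $|v_\rho(\rho,t)|\le C_1\rho^{-\beta}$ from $r$ to $R$ and adding $v(R,t)\le C_1$ gives $v(r,t)\le C_1+\frac{C_1}{\beta-1}r^{-(\beta-1)}\le C_1\big(R^{\beta-1}+\frac1{\beta-1}\big)r^{-(\beta-1)}$ for $r\in(0,R)$, i.e.\ \eqref{eq:upper_v} holds with $\kappa\defs\beta-1$ and $B\defs C_1(R^{\beta-1}+\frac1{\beta-1})$. If $\maxmq<1$, then $u(r,t)+1\le(C_1+R^\alpha)r^{-\alpha}$ for $r\in(0,R)$, which is \eqref{eq:upper_u} with $A\defs C_1+R^\alpha$; if $\maxmq\ge1$, then \eqref{eq:upper_u} is not part of the definition of $\mc S_u$ and we simply put $A\defs1$. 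Finally, \eqref{eq:params_for_S} is satisfied: $M_v,A,B>0$ are clear, $\kappa=\beta-1>n-2$ since $\beta>\frac np>n-1$, and $\alpha\in(0,\frac2{(\onemaxmq)_+})$ because this interval equals $(0,\infty)$ when $\maxmq\ge1$ (and then $\alpha\in(n,\infty)$) while its right endpoint is the upper bound $\frac2{\onemaxmq}$ in \eqref{eq:ex_init:cond_alpha} when $\maxmq<1$. Hence $(u(\cdot,t),v(\cdot,t))\in\mc S(M_u,M_v,A,B,\alpha,\kappa)$ for all $t\in(0,\tmax)$.

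The genuine obstacle is hidden in ``Appendix~\ref{sec:pw}'': one has to revisit the blow-up-profile arguments of \cite{WinklerBlowupProfilesLife2020,FuestBlowupProfilesQuasilinear2020} and verify that the constant in $u\le Cr^{-\alpha}$ (and in the accompanying weighted bounds for $v$) can be taken to depend only on $M_u$, on the weighted and Sobolev norms of $(u_0,v_0)$, and on the data of $\phi,\psi$ --- in particular \emph{not} on the length of the existence interval. This $\eta$-uniformity is precisely what lets the whole family $(u_\eta,v_\eta)_{\eta\in(0,\eta_0)}$, whose energies diverge to $-\infty$ by \eqref{eq:ex_init:F_-infty}, be trapped in one single set $\mc S$, on which Lemma~\ref{lm:F_ge_C_theta} then applies uniformly --- the mechanism that ultimately produces finite-time blow-up. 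A minor further point, already anticipated in Lemma~\ref{lm:ex_init}, is the need to match the $(\alpha,\beta,p)$-ranges there with the hypotheses of the appendix, which is why \eqref{eq:ex_init:cond_alpha} is phrased via $\beta$ rather than directly via $\frac{n(n-1)}{(n(m-q_2)+1)_+}$.
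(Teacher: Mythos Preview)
Your argument is largely along the paper's lines, but there is one genuine gap in how you obtain the pointwise bound \eqref{eq:upper_v} for $v$. You derive it by integrating the pointwise gradient estimate $|v_r(r,t)|\le C_1r^{-\beta}$, which you attribute to Appendix~\ref{sec:pw}. However, the only result there providing such an $L^\infty$ bound on $|x|^\beta\nabla v$ is Theorem~\ref{th:pw_ks} (via Lemma~\ref{lm:x_beta_nabla_v_est} with $\theta=\infty$), and that route requires the structural hypothesis \eqref{eq:ps_ks:cond_m_q}, namely $m-q_2\in(-\tfrac1n,\tfrac{n-2}{n}]$, together with a pointwise bound on $u$. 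When $\maxmq<1$ the lower bound $m-q_2>-\tfrac1n$ does follow from \eqref{eq:cond:m_q1q2_main} (and in fact so does $m>\tfrac{n-2}{n}$, which lets one take $\ol T=\infty$ so that the constant is genuinely $\tmax$-independent). But when $\maxmq\ge1$ the condition \eqref{eq:cond:m_q1q2_main} is vacuous and places no lower bound on $m-q_2$; parameters with $m-q_2\le-\tfrac1n$ (for instance $n=2$, $m=0$, $q_1=q_2=1$) satisfy all hypotheses of Lemma~\ref{lm:u_v_in_S} yet lie outside the scope of Theorem~\ref{th:pw_ks}. In that regime your detour through $|v_r|$ is not available.

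The paper avoids this entirely: it obtains \eqref{eq:upper_v} directly from Lemma~\ref{lm:heat_inhom_v_pw}, which needs only the uniform $L^1$ bound on $u$ and the $W^{1,p}$ bound on $v_0$ (both supplied by \eqref{eq:ex_init:u_nabla_v_pw_v_w1p}) and yields $v(r,t)\le Br^{-\kappa}$ with $\kappa=\tfrac{n-p}{p}>n-2$, regardless of the value of $\maxmq$. Replacing your integration step with this direct appeal closes the gap; the remainder of your proof---mass conservation, the ODE comparison for $\intom v$, the invocation of Theorem~\ref{th:pw_ks} for the pointwise bound on $u$ only when $\maxmq<1$, and the verification of \eqref{eq:params_for_S}---matches the paper.
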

\begin{proof}
  We let $p, \beta, \alpha, L, \eta_0$ as well as $(u_\eta, v_\eta)_{\eta \in (0, \eta_0)}$ be as given by Lemma~\ref{lm:ex_init}
  and fix their corresponding solution $(u, v)$ of \eqref{prob:proto} given by Lemma~\ref{lm:local_ex}.
  If $\maxmq \ge 1$, \eqref{eq:upper_u} is a void condition (and we may choose an arbitrary $A > 0$).
  Else, \eqref{eq:cond:phi_psi_mq},
  the lower bound for $\alpha$ in \eqref{eq:ex_init:cond_alpha},
  the uniform estimates in \eqref{eq:ex_init:u_nabla_v_pw_v_w1p}
  and the fact that \eqref{eq:cond:m_q1q2_main} and \eqref{eq:cond:m_q2} entail $m > \frac{n-2}{n}$
  render Theorem~\ref{th:pw_ks} applicable,
  which warrants that there is $A > 0$ (depending only on the quantities in \eqref{eq:ass_fd_init} and on $M_u$, $p$, $\beta$, $\alpha$, $L$) such that
  \begin{align*}
    u(x, t) + 1 \le A |x|^{-\alpha}
    \qquad \text{for all $x \in \Omega$ and all $t \in (0, \tmax)$}.
  \end{align*}

  By Hölder's inequality, $\|\varphi\|_{\leb 1} \le c_1 \|\varphi\|_{\sob1p}$ for all $\varphi \in \sob1p$ and some $c_1 > 0$.
  Since integrating the first equation in \eqref{prob:proto} shows that $\intom u = \intom u_0 = M_u$ in $(0, \tmax)$,
  the comparison principle asserts $\|v(\cdot, t)\|_{\leb 1} \le \max\{\|v_0\|_{\leb1}, M_u\} \le \max\{c_1 L, M_u\} \sfed M_v$.

  Due to \eqref{eq:ex_init:u_nabla_v_pw_v_w1p}, Lemma~\ref{lm:heat_inhom_v_pw} finally asserts that there is $c_2 > 0$ such that $v(x, t) \le c_2 |x|^{-\frac{n-p}{p}}$
  for all $x \in \Omega$ and all $t \in (0, \tmax)$ and hence
  \begin{align*}
    v(x, t) \le B |x|^{-\kappa}
    \qquad \text{for all $x \in \Omega$ and all $t \in (0, \tmax)$},
  \end{align*}
  where $B \defs c_2 R^{n-\frac{n-p}{p}}$ and $\kappa \defs n$.
  We conclude $(u(\cdot, t), v(\cdot, t)) \in \mc S(M_u, M_v, A, B, \alpha, \kappa)$ for all $t \in (0, \tmax)$.
\end{proof}

\section{Conclusion: finite-time blow-up}\label{sec:ftbu_final}
Combining the results from the previous two sections shows that some solutions blow up in finite time whenever the conditions of Theorem~\ref{th:ftbu:q1q2} are met.
\begin{proof}[Proof of Theorem~\ref{th:ftbu:q1q2}]
  We first note that the assumptions of Theorem~\ref{th:ftbu:q1q2} contain both \eqref{eq:ass_fd} and \eqref{eq:params_for_S}.
  Let $M_u>0$. We apply Lemma~\ref{lm:u_v_in_S} to fix $M_v,A,B,\alpha,\kappa > 0$ and find a family of initial data 
  $(u_\eta,v_\eta)_{\eta\in(0,\eta_0)}$ satisfying \eqref{eq:ex_init:F_-infty} and whose corresponding solutions $(u,v)$ given by Lemma~\ref{lm:local_ex} belong to $\mc S = \mc S(M_u,M_v,A,B,\alpha,\kappa)$.
  By Lemma~\ref{lm:F_ge_C_theta}, there are $\gamma\in(0,1)$ and $c_1 > 0$ such that
  \begin{align}\label{eq:f_d_est}
    \F(u,v)\ge -c_1 (\D^{\gamma}(u,v)+1)
    \qquad \text{for all $(u,v)\in \mc S$},
  \end{align}
  where $\F$ and $\D$ are as in \eqref{eq:def_F} and \eqref{eq:def_D}, respectively.
  Moreover, \eqref{eq:ex_init:F_-infty} allows us to fix $\eta>0$ so small such that $\F (u_\eta,v_\eta)< -2c_1$.
  
  We now claim that the maximal existence time $\tmax$ of the solution $(u, v)$ of \eqref{prob:proto} with initial data $(u_\eta, v_\eta)$ is finite.
  Indeed, since a direct computation (see for instance \cite[Lemma~2.1]{WinklerDoesVolumefillingEffect2009}) entails
  \begin{align*}
    \ddt \F(u, v) = - \D(u, v) \le 0
    \qquad \text{in $(0, \tmax)$}
  \end{align*}
  (and hence in particular $\F(u, v) \le -2c_1$ in $(0, \tmax)$),
  and due to \eqref{eq:f_d_est}, we conclude that 
  \begin{align*}
      \ddt (-\F(u, v))
    = \D(u, v)
    \ge \left(\frac{1}{c_1} (-\F(u, v)) - 1 \right)^\frac{1}{\gamma}
    \ge \left(\frac{1}{2c_1} (-\F(u, v)) \right)^\frac{1}{\gamma}
    \qquad \text{in $(0, \tmax)$}.
  \end{align*}
  By comparison, we therefore have
  \begin{align*}
        -\F(u(\cdot, t), v(\cdot, t))
    \ge 2c_1 \left( 1 - \frac{\lambda t}{2c_1} \right)^{-\frac{1}{\lambda}}
    \qquad \text{for all $t \in (0, \tmax)$},
  \end{align*}
  where $\lambda \defs \frac{1-\gamma}{\gamma} > 0$,
  which would be absurd if $\tmax > \frac{2c_1}{\lambda}$.
  Hence $\tmax$ is finite and the extensibility criterion in Lemma~\ref{lm:local_ex} asserts $\limsup_{t \nea \tmax} \|u(\cdot, t)\|_{\leb\infty} = \infty$.
\end{proof}

\begin{proof}[Proof of Theorem~\ref{th:ftbu}]
  That the functions $\phi$ and $\psi$ treated in Theorem~\ref{th:ftbu} are admissible choices for Theorem~\ref{th:ftbu:q1q2} has been shown in Lemma~\ref{lem:phi_psi_admissible}.
\end{proof}

\appendix
\section{Pointwise upper estimates}\label{sec:pw}
A crucial new ingredient in the proof of Theorem~\ref{th:ftbu} compared to \cite{CieslakStinnerFinitetimeBlowupGlobalintime2012,CieslakStinnerFiniteTimeBlowupSupercritical2014,CieslakStinnerNewCriticalExponents2015} consists of pointwise upper estimates for $u$,
which are provided by \cite[Theorem~1.3]{FuestBlowupProfilesQuasilinear2020}.
Unfortunately, in that theorem the estimates depend on $\|v_0\|_{\sob1\infty}$, and the $\sob1\infty$ norm of the functions $v_\eta$ constructed in Lemma~\ref{lm:ex_init} blows up as $\eta \sea 0$.
In this appendix, we therefore prove the following version of \cite[Theorem~1.3]{FuestBlowupProfilesQuasilinear2020},
which relaxes the condition on $v_0$ (and also drops the condition $m \gt \frac{n-2}{n}$ if $\ol T < \infty$) from (1.14) in \cite{FuestBlowupProfilesQuasilinear2020}).
\begin{theorem}\label{th:pw_ks}
  Let
  \begin{align}\label{eq:pw_ks:params}
    n \ge 2, R \gt 0, \Omega \defs B_R(0) \subset \R^n, m, q \in \R, L \gt 0, K_{D, 1}, K_{D, 2}, K_S > 0, d \ge 0, \ol T \in (0, \infty]
  \end{align}
  be such that
  \begin{align*}
    m > \frac{n-2}{n} \quad \text{or} \quad  \ol T < \infty
  \end{align*}
  and
  \begin{align}\label{eq:ps_ks:cond_m_q}
    m-q \in \left(-\frac1n, \frac{n-2}{n} \right].
  \end{align}
  Then for all
  \begin{align}\label{eq:pw_ks:cond_alpha_beta}
    \alpha \gt \frac{n \beta}{(m-q)n + 1}, \quad
    \beta \gt \frac{n}{p}, \quad
    p \in \left[1, \frac{n}{n-1}\right)
    \quad \text{and} \quad
    T \le \ol T,
  \end{align}
  there exists $C \gt 0$ with the following property:
  For any pair of nonnegative, radially symmetric functions $(u_0, v_0) \in \con0 \times \sob1p$
  with
  \begin{align}\label{eq:pw_ks:cond_init_u}
    \intom u_0 \le L,
    \quad \text{and} \quad
    u_0(x) \le L |x|^{-\alpha} \text{ for all } x \in \Omega
  \end{align}
  as well as $|x|^\beta v_0 \in \sob1\infty$,
  \begin{align}\label{eq:pw_ks:cond_init_v}
    \|v_0\|_{\sob1p} \le L \quad \text{and} \quad \||x|^\beta v_0\|_{\sob1\infty} \le L,
  \end{align}
  and any pair of nonnegative, radially symmetric functions
  \begin{align}\label{eq:pw_ks:reg_uv}
    (u, v) \in \left( C^0(\Ombar \times [0, T)) \cap C^{2, 1}(\Ombar \times (0, T))\right)^2
  \end{align}
  solving
  \begin{align}\label{prob:ks_DS}
    \begin{cases}
      u_t = \nabla \cdot (D(u, v) \nabla u - S(u, v) \nabla v)                       & \text{in $\Omega \times (0, T)$}, \\
      v_t = \Delta v - v + u                                                         & \text{in $\Omega \times (0, T)$}, \\
      \partial_\nu u = \partial_\nu v = 0                                            & \text{on $\partial \Omega \times (0, T)$}, \\
      u(\cdot, 0) = u_0, v(\cdot, 0) = v_0                                           & \text{in $\Omega$}
    \end{cases}
  \end{align}
  classically,
  with $D, S \in C^1(Q_u \times Q_v)$,
  $Q_u \defs \ol{u(\Ombar \times [0, T))}$ and $Q_v \defs \ol{v(\Ombar \times [0, T))}$, satisfying
  \begin{align*}
    K_{D, 1} (\rho+d)^{m-1} \le D(\rho, \sigma) \le K_{D, 2} \max\{\rho+d, 1\}^{m-1}
    \quad \text{and} \quad
    |S(\rho, \sigma)| \le K_s \max\{\rho+d, 1\}^q
  \end{align*}
  for all $(\rho, \sigma) \in (Q_u, Q_v)$,
  fulfills
  \begin{align} \label{eq:pw_ks:u_est}
    u(x, t) \le C |x|^{-\alpha}
    \quad \text{and} \quad
    |\nabla v(x, t)| \le C |x|^{-\beta}
    \qquad \text{for all $x \in \Omega$ and all $t \in (0, T)$}.
  \end{align}
\end{theorem}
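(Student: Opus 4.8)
The plan is to reduce Theorem~\ref{th:pw_ks} to the original result \cite[Theorem~1.3]{FuestBlowupProfilesQuasilinear2020}, whose proof already establishes \eqref{eq:pw_ks:u_est} provided one controls $\|v_0\|_{\sob1\infty}$. The only obstruction is that the functions $v_\eta$ constructed in Lemma~\ref{lm:ex_init} merely satisfy $\||x|^\beta v_\eta\|_{\sob1\infty}\le L$, not $\|v_\eta\|_{\sob1\infty}\le L$, so I first need to revisit the argument of \cite{FuestBlowupProfilesQuasilinear2020} and isolate exactly \emph{where} the $\sob1\infty$ bound on $v_0$ enters. As is typical in such De Giorgi-type iterations, the gradient bound $|\nabla v(x,t)|\le C|x|^{-\beta}$ is first derived for the solution of the heat equation $v_t=\Delta v-v+u$ via a Duhamel/variation-of-constants representation; the initial-layer contribution $e^{t(\Delta-1)}v_0$ is estimated separately and is the only place the regularity of $v_0$ is used. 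Here I would replace the crude $\sob1\infty$ estimate with a weighted one: if $|x|^\beta v_0\in\sob1\infty$ with norm $\le L$, then $|\nabla(e^{t(\Delta-1)}v_0)(x)|\le C|x|^{-\beta}$ uniformly in $t\in(0,\ol T)$ by the singular-weight heat-semigroup estimates (of the type already collected in Lemma~\ref{lm:heat_inhom_v_pw} and its precedents), since $\beta<n$ keeps $|x|^{-\beta}$ locally integrable and the Neumann heat kernel has Gaussian bounds. Feeding this into the rest of the iteration from \cite{FuestBlowupProfilesQuasilinear2020} verbatim then yields both estimates in \eqref{eq:pw_ks:u_est}.

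More concretely, the scheme of \cite{FuestBlowupProfilesQuasilinear2020} is: (i) bootstrap a pointwise bound $u(x,t)\le C|x|^{-\alpha'}$ for some exponent $\alpha'$ via a testing argument based on $\intb u$ (or on functionals of $u$ localised near the origin), using the structure condition on $D$ and mass control $\intom u\le L$; (ii) use this together with elliptic/parabolic regularity for the $v$-equation to get $|\nabla v(x,t)|\le C|x|^{-\beta'}$; (iii) iterate, each round improving $\alpha'\to\alpha''$ and $\beta'\to\beta''$ until reaching the claimed exponents $\alpha,\beta$ subject to $\alpha>\frac{n\beta}{(m-q)n+1}$ and $\beta>\frac np$; the condition $m-q\in(-\frac1n,\frac{n-2}n]$ and $p\in[1,\frac{n}{n-1})$ make the iteration converge. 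I would keep this skeleton intact and only certify that: (a) the hypotheses on $D,S$ here — namely $K_{D,1}(\rho+d)^{m-1}\le D\le K_{D,2}\max\{\rho+d,1\}^{m-1}$ and $|S|\le K_S\max\{\rho+d,1\}^q$ — are at least as strong as what that proof needs (the $\max\{\cdot,1\}$ and the shift $d\ge0$ only help, since they bound $D$ and $S$ away from degeneracy for small $u$); (b) the $L^p$-bound $\|v_0\|_{\sob1p}\le L$ with $p\ge1$ suffices for the low-order terms exactly as before; and (c) the new weighted initial-datum bound replaces the $\sob1\infty$-bound in step (ii)'s initial layer.

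The point requiring genuine care — and the step I expect to be the main obstacle — is the case $\ol T<\infty$ with $m\le\frac{n-2}n$, which the original theorem excludes. When $m>\frac{n-2}n$ the iteration exponents are constrained so that global-in-time estimates hold; for $m\le\frac{n-2}n$ one loses that, but on a \emph{finite} time interval $[0,T]$ with $T\le\ol T<\infty$ the relevant smoothing estimates carry harmless $T$-dependent constants and the iteration still closes because one no longer needs decay as $t\to\infty$, only boundedness on $(0,T)$. I would therefore re-examine each place in \cite{FuestBlowupProfilesQuasilinear2020} where $m>\frac{n-2}n$ is invoked and check it is used solely to obtain time-uniform (as $t\to\infty$) bounds; replacing those by bounds uniform on $(0,T)$ with constant $C=C(T)$ yields the claim. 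The remaining assertions — radial symmetry being preserved, the regularity class \eqref{eq:pw_ks:reg_uv} sufficing to justify the testing computations (which take place on $(0,T)$ where $u,v\in C^{2,1}$), and the final reduction to $\phi,\psi$ of the form \eqref{eq:intro:phi_psi_proto} by setting $D(u,v)=\phi(u)$, $S(u,v)=\psi(u)$ with $d=1$ — are routine. In short: I would not reprove the iteration, but rather reorganise the proof of \cite[Theorem~1.3]{FuestBlowupProfilesQuasilinear2020} so that its dependence on $v_0$ passes through $\||x|^\beta v_0\|_{\sob1\infty}$ and $\|v_0\|_{\sob1p}$ only, and so that the structural hypothesis $m>\frac{n-2}n$ is replaced by the alternative $\ol T<\infty$ wherever it occurs.
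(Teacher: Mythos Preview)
Your plan matches the paper's: track the $v_0$-dependence so that it enters only through $\|v_0\|_{\sob1p}$ and $\||x|^\beta v_0\|_{\sob1\infty}$, and observe that the hypothesis $m>\frac{n-2}{n}$ in \cite{FuestBlowupProfilesQuasilinear2020} furnishes only a dissipation term $-\intom w^p$ in the Moser ODI (via \cite[Lemma~2.7]{FuestBlowupProfilesQuasilinear2020}), which is dispensable when $T\le\ol T<\infty$. Your sketch of the structure of \cite{FuestBlowupProfilesQuasilinear2020} is slightly off, however: there is no back-and-forth $(\alpha',\beta')$ iteration between $u$ and $\nabla v$ --- one first obtains $\||x|^\beta\nabla v(\cdot,t)\|_{\leb\theta}\le C$ for some finite $\theta>n$ using only the $L^1$ bound on $u$ (Lemma~\ref{lm:x_beta_nabla_v_est}), and then runs the scalar Moser iteration on $w=|x|^\alpha u$ once (Theorem~\ref{th:pw_scalar}); moreover, in Winkler's radial-ODE argument for $\nabla v$ (not a Duhamel representation) the quantity $\|r^\beta v_0\|_{W^{1,\infty}((0,R))}$ already appears verbatim in \cite[(3.26)]{WinklerBlowupProfilesLife2020}, so the weighted hypothesis \eqref{eq:pw_ks:cond_init_v} plugs in directly and no separate heat-semigroup estimate for $\nabla(\ure^{t(\Delta-1)}v_0)$ is needed.
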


\subsection{The superfluous condition \tops{$m > \frac{n-2}{n}$}{m > (n-2)/n}}
While revisiting the proof of \cite{FuestBlowupProfilesQuasilinear2020}, we realized that the condition $m > \frac{n-2}{n}$ imposed there is not needed if $\ol T < \infty$.
Although this improvement is inconsequential for the proof of Theorem~\ref{th:ftbu}, we nonetheless briefly illustrate how to overcome this condition.
That is, we show that the following generalization of \cite[Theorem~1.1]{FuestBlowupProfilesQuasilinear2020} holds.
\begin{theorem}\label{th:pw_scalar}
  Suppose that $\Omega \subset \R^n$, $n \ge 2$, is a smooth, bounded domain with $0 \in \Omega$,
  that the parameters
  \begin{align}\label{eq:pw_scalar:params}
    m, q \in \R,
    K_{D,1}, K_{D,2}, K_S, K_f, L, M, \beta \gt 0,
    d \ge 0,
    \ol T \in (0, \infty],
    \theta \gt n,
    \pu \ge 1
  \end{align}
  fulfill
  \begin{align}\label{eq:pw_scalar:T_finite}
    m > \frac{n-2\pu}{n} \quad \text{or} \quad \ol T < \infty
  \end{align}
  as well as
  \begin{align}\label{eq:pw_scalar:cond_m_q}
    m-q \in \left(\frac{\pu}\theta -\frac{\pu}n, \frac{\pu}{\theta} + \frac{\beta \pu-\pu}{n} \right],
  \end{align}
  and that
  \begin{align}\label{eq:pw_scalar:cond_alpha}
    \alpha \gt\frac{\beta}{m-q + \frac{\pu}{n} - \frac{\pu}{\theta}}.
  \end{align}
  Then there exists $C \gt 0$ such that given $T \le \ol T$, $Q_u \subseteq [0, \infty)$,
  \begin{align} \label{eq:pw_scalar:reg_dsf}
    D, S \in C^1(\Ombar \times (0, T) \times Q_u), \quad
    f \in C^1(\Ombar \times (0, T); \R^n)
    \quad \text{and} \quad
    u_0 \in \con0
  \end{align}
  satisfying (with $Q_T \defs \Omega \times (0, T)$)
  \begin{align} 
    \label{eq:pw_scalar:D1}
      &\inf_{(x, t) \in Q_T} D(x, t, \rho) \ge K_{D,1} (\rho+d)^{m-1}, \\
    \label{eq:pw_scalar:D2}
      &\sup_{(x, t) \in Q_T} D(x, t, \rho) \le K_{D,2} \max\{\rho+d, 1\}^{m-1}, \\
    \label{eq:pw_scalar:S}
      &\sup_{(x, t) \in Q_T} |S(x, t, \rho)| \le K_S \max\{\rho+d, 1\}^q
  \end{align}
  for all $\rho \in Q_u$
  and
  \begin{align} \label{eq:pw_scalar:f}
    \sup_{t \in (0, T)} \intom |x|^{\theta \beta} |f(x, t)|^\theta \dx \le K_f
  \end{align}
  as well as
  \begin{align} \label{eq:pw_scalar:u0}
    u_0(x) \le L |x|^{-\alpha} \qquad \text{for all $x \in \Omega$},
  \end{align}
  all nonnegative classical solutions $u \in C^0(\Ombar \times [0, T)) \cap C^{2, 1}(\Ombar \times (0, T))$ of
  \begin{align} \label{prob:scalar_eq}
    \begin{cases}
      u_t \le \nabla \cdot (D(x, t, u) \nabla u + S(x, t, u) f(x, t)), & \text{in $\Omega \times (0, T)$}, \\
      (D(x, t, u) \nabla u + S(x, t, u) f) \cdot \nu \le 0,            & \text{on $\partial \Omega \times (0, T)$}, \\
      u(\cdot, 0) \le u_0,                                             & \text{in $\Omega$},
    \end{cases}
  \end{align}
  with $\ol{u(\Ombar \times [0, T))} \subseteq Q_u$ and
  \begin{align} \label{eq:pw_scalar:p}
    \sup_{t \in (0, T)} \intom u^{\pu}(\cdot, t) \le M
  \end{align}
  fulfill the first estimate in \eqref{eq:pw_ks:u_est}.
\end{theorem}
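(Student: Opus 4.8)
The plan is to follow the proof of \cite[Theorem~1.1]{FuestBlowupProfilesQuasilinear2020} essentially verbatim, isolating and replacing the one point where the hypothesis $m > \frac{n-2\pu}{n}$ is invoked. Recall the structure of that argument: one runs a Moser--Alikakos-type iteration in which, at the $k$-th level, the differential inequality in \eqref{prob:scalar_eq} is tested against a function of the form $|x|^{-\sigma_k}(u+d)^{r_k-1}$ with carefully chosen exponents $\sigma_k$ and $r_k \to \infty$; the diffusion lower bound \eqref{eq:pw_scalar:D1} produces a weighted Dirichlet term $\intom |x|^{-\sigma_k}(u+d)^{r_k+m-3}|\nabla u|^2$, the drift contribution coming from $S f$ is split by Young's inequality into a part absorbed by this Dirichlet term and a remainder handled by H\"older's inequality together with the weighted bound \eqref{eq:pw_scalar:f} on $f$, and a weighted Sobolev/Gagliardo--Nirenberg inequality then converts the Dirichlet term into a genuine gain in integrability. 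The constraints \eqref{eq:pw_scalar:cond_m_q}, \eqref{eq:pw_scalar:cond_alpha}, $\theta > n$ and $\pu \ge 1$ are exactly what make the weights integrable near the origin and each step a gain, so that the resulting sequence of weighted $L^{r_k}$ bounds converges as $k \to \infty$ to the desired estimate $|x|^\alpha u \in L^\infty(\Omega \times (0,T))$, i.e.\ the first estimate in \eqref{eq:pw_ks:u_est}.

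Where $m > \frac{n-2\pu}{n}$ enters is only the \emph{base step}: the passage from the given time-uniform $L^{\pu}$-bound \eqref{eq:pw_scalar:p} to a first time-uniform weighted estimate $\sup_{t\in(0,T)} \intom |x|^{-\sigma_0}(u+d)^{r_0} \le C$. After the testing and absorption described above, one reaches an inequality of the shape $\frac{\diff}{\dt} y_0(t) \le -c_1 (\text{weighted Dirichlet term}) + c_2 y_0(t) + c_3$ for $y_0(t) \defs \intom |x|^{-\sigma_0}(u+d)^{r_0}$, with $c_1,c_2,c_3>0$ depending only on the data in \eqref{eq:pw_scalar:params} and on $K_{D,1}, K_{D,2}, K_S, K_f, M$ (here the lower-order-in-$u$ remainder is controlled by \eqref{eq:pw_scalar:p} via Young's inequality). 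Under $m > \frac{n-2\pu}{n}$ the Dirichlet term dominates a superlinear power of $y_0$ after a weighted Sobolev inequality, making the inequality dissipative and hence giving a bound uniform on the possibly unbounded interval $(0,\ol T)$. If instead $\ol T < \infty$, we simply discard the Dirichlet term and keep the \emph{affine} inequality $y_0'(t) \le c_2 y_0(t) + c_3$. Thanks to \eqref{eq:pw_scalar:cond_m_q} and \eqref{eq:pw_scalar:cond_alpha} the exponents $\sigma_0, r_0$ can be chosen (in relation to $\alpha$) so that the iteration is a gain \emph{and} the weight $|x|^{-\sigma_0 - r_0\alpha}$ is integrable over $\Omega$; then \eqref{eq:pw_scalar:u0} yields $y_0(0) \le \intom |x|^{-\sigma_0}(L|x|^{-\alpha}+d)^{r_0} < \infty$, and Gr\"onwall's lemma gives $y_0(t) \le (y_0(0) + c_3/c_2)\,\ure^{c_2 \ol T}$ for all $t\in(0,\ol T)$, which is finite precisely because $\ol T < \infty$. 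This supplies exactly the starting estimate the remainder of the iteration needs.

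With this base estimate in hand, the higher levels of the Moser iteration in \cite{FuestBlowupProfilesQuasilinear2020} proceed unchanged — they use only \eqref{eq:pw_scalar:D1}--\eqref{eq:pw_scalar:S}, \eqref{eq:pw_scalar:f}, the just-obtained $y_0$-bound and the exponent conditions, and \emph{not} $m > \frac{n-2\pu}{n}$ — and deliver the claimed pointwise bound. The main obstacle, and the only genuinely delicate point, is to confirm that $m > \frac{n-2\pu}{n}$ is really used in \cite{FuestBlowupProfilesQuasilinear2020} at this single base step and nowhere in the self-propagating part of the iteration, and to track that the final constant now depends on $\ol T$ only through the harmless factor $\ure^{c_2 \ol T}$ (with $\ol T$ fixed and finite) while the limit over the iteration levels $k$ still converges, since only the base level carries that $\ol T$-dependent factor and the later levels contribute summable gains.
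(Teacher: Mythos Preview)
Your diagnosis of where the hypothesis $m > \frac{n-2\pu}{n}$ is used in \cite{FuestBlowupProfilesQuasilinear2020} is not accurate, and this undermines the whole plan. As the paper explains (see the proof of Lemma~\ref{lm:w_bdd_lp}), that hypothesis enters through \cite[Lemma~2.7]{FuestBlowupProfilesQuasilinear2020}, which provides the absorbing estimate $\intom w^p \le \eps \intom |\nabla \varphi|^2 + C(\intom \varphi^{\tilde s})^{1/\tilde s}$; this is then invoked in \cite[Lemma~2.8]{FuestBlowupProfilesQuasilinear2020} at \emph{every} exponent $p$ to produce the dissipative summand $\intom w^p$ on the left of the differential inequality. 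It is not a base-step phenomenon: the very dissipation that makes the ODI at each iteration level self-limiting comes from this lemma. Drop the hypothesis and you lose the dissipative term at all levels simultaneously, so the Moser iteration of \cite[Lemma~2.10]{FuestBlowupProfilesQuasilinear2020} cannot ``proceed unchanged'' once the base bound is supplied; it fails at every step.

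Consequently, a single Gr\"onwall argument at level $0$ is not enough. The paper's remedy is to accept the non-dissipative inequality
\[
  \ddt \intom w^p \le C p^\nu + C p^\nu \left(\intom w^{p/2}\right)^2
\]
at every $p$ and to redo the entire Moser iteration by integrating in time over $[0,T]$ at each level (Lemma~\ref{lm:w_bdd_l_infty}). This injects a factor of $\ol T$ at every step, but only through the constant in $(c_2 p_j^\nu)^{1/p_j}$, so the infinite product $\prod_j (c_2 p_j^\nu)^{1/p_j}$ still converges. Your claim that ``only the base level carries that $\ol T$-dependent factor'' is therefore the concrete gap: the finiteness of $\ol T$ must be used at every level of the iteration, and checking that the resulting accumulation of $\ol T$-factors remains bounded is exactly the work that has to be done. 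Incidentally, the paper also first reduces to $d=0$ (Lemma~\ref{lm:d_gt_0_red}) and to $m<1$, the latter being needed to pass from \eqref{eq:w_bdd_lp:ddt1} with $s=\tfrac12$ to \eqref{eq:w_bdd_lp:ddt}.
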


Throughout this subsection, we henceforth fix a smooth, bounded domain $\Omega \subset \R^n$, $n \ge 2$, with $0 \in \Omega$
as well as parameters in \eqref{eq:pw_scalar:params} and $\alpha$ satisfying \eqref{eq:pw_scalar:cond_m_q} and \eqref{eq:pw_scalar:cond_alpha}
(where we assume $\beta \gt (m-q) \alpha$ without loss of generality).
For the sake of clarity, we also fix a set $Q_u \subseteq [0, \infty)$, functions in \eqref{eq:pw_scalar:reg_dsf} complying with \eqref{eq:pw_scalar:D1}--\eqref{eq:pw_scalar:u0}
and a solution $u \in C^0(\Ombar \times [0, T)) \cap C^{2, 1}(\Ombar \times (0, T))$ of \eqref{prob:scalar_eq}
satisfying \eqref{eq:pw_scalar:p} and $\ol{u(\Ombar \times [0, T))} \subseteq Q_u$,
but stress that all constants below only depend on the parameters in \eqref{eq:pw_scalar:params} and on $\alpha$.

We first note that instances where the parameter $d$ in \eqref{eq:pw_scalar:params} is positive may be reduced to the case $d = 0$ considered in \cite{FuestBlowupProfilesQuasilinear2020}.
\begin{lemma}\label{lm:d_gt_0_red}
  Suppose that Theorem~\ref{th:pw_scalar} holds under the restriction that the parameter $d$ in \eqref{eq:pw_scalar:params} vanishes.
  Then that theorem is true in general.
\end{lemma}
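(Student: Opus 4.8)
The plan is to reduce the assertion to the already-available case $d = 0$ via the shift $\tilde u \defs u + d$. Since there is nothing to prove when $d = 0$, I would assume $d > 0$, fix all data as in the statement of Theorem~\ref{th:pw_scalar}, and set $\tilde u \defs u + d$. This function still belongs to $C^0(\Ombar \times [0, T)) \cap C^{2, 1}(\Ombar \times (0, T))$ and satisfies $\ol{\tilde u(\Ombar \times [0, T))} \subseteq \tilde Q_u \defs \{\,\rho + d \mid \rho \in Q_u\,\} \subseteq [0, \infty)$. A direct computation shows that $\tilde u$ solves the problem \eqref{prob:scalar_eq} with the \emph{same} $f$, with $u_0$ replaced by $\tilde u_0 \defs u_0 + d \in \con0$, and with $D$, $S$ replaced by $\tilde D(x, t, \rho) \defs D(x, t, \rho - d)$ and $\tilde S(x, t, \rho) \defs S(x, t, \rho - d)$, which lie in $C^1(\Ombar \times (0, T) \times \tilde Q_u)$ since $\rho \mapsto \rho - d$ is smooth.

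Next I would check that the transformed data fulfil the hypotheses of Theorem~\ref{th:pw_scalar} with $d = 0$ and with the same $m, q, K_{D, 1}, K_{D, 2}, K_S, \theta, \beta, \pu, \ol T$ and $\alpha$, but with $L$ and $M$ enlarged. Indeed, \eqref{eq:pw_scalar:D1}--\eqref{eq:pw_scalar:S} for $\tilde D, \tilde S$ follow immediately from the corresponding bounds for $D, S$ because $(\rho - d) + d = \rho$, and \eqref{eq:pw_scalar:f} is untouched. For the initial bound, boundedness of $\Omega$ together with $0 \in \Omega$ gives $R_\Omega \defs \sup_{x \in \Omega} |x| < \infty$, so that $\tilde u_0(x) = u_0(x) + d \le (L + d R_\Omega^\alpha) |x|^{-\alpha}$ for all $x \in \Omega$; hence \eqref{eq:pw_scalar:u0} holds with $\tilde L \defs L + d R_\Omega^\alpha$. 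Finally, convexity of $t \mapsto t^\pu$ yields $\intom \tilde u^\pu(\cdot, t) \le 2^{\pu - 1}\big(\intom u^\pu(\cdot, t) + d^\pu |\Omega|\big) \le 2^{\pu - 1}(M + d^\pu |\Omega|) \sfed \tilde M$ for all $t \in (0, T)$, so \eqref{eq:pw_scalar:p} holds with $\tilde M$. The remaining conditions \eqref{eq:pw_scalar:T_finite}, \eqref{eq:pw_scalar:cond_m_q} and \eqref{eq:pw_scalar:cond_alpha} involve only parameters that were left unchanged.

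Applying the assumed $d = 0$ version of Theorem~\ref{th:pw_scalar} then produces a constant $C > 0$, depending only on $n$, $\Omega$ and the parameters $m, q, K_{D, 1}, K_{D, 2}, K_S, K_f, \tilde L, \tilde M, \beta, \ol T, \theta, \pu$ and $\alpha$, such that $\tilde u(x, t) \le C |x|^{-\alpha}$ in $\Omega \times (0, T)$, whence $u(x, t) = \tilde u(x, t) - d \le C |x|^{-\alpha}$ there, i.e.\ the first estimate in \eqref{eq:pw_ks:u_est}. Since $\tilde L$ and $\tilde M$ are themselves determined by $L, M, d, |\Omega|, R_\Omega, \pu$ and $\alpha$, the constant $C$ ultimately depends only on the quantities admitted by the theorem, which is all that is claimed.

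I do not expect any genuine obstacle here; the proof is essentially a bookkeeping exercise. The only two points demanding a little care are (i) confirming that the shift converts the $d$-dependent ellipticity and growth bounds \eqref{eq:pw_scalar:D1}--\eqref{eq:pw_scalar:S} into their $d = 0$ counterparts \emph{exactly} — which is precisely why those bounds were phrased in terms of $\rho + d$ — and (ii) keeping track of the dependence of $\tilde L$ and $\tilde M$ so that the resulting constant remains of the form permitted in Theorem~\ref{th:pw_scalar}. Using boundedness of $\Omega$ to absorb the additive constant $d$ into $|x|^{-\alpha}$ is the single place where an additional (but entirely elementary) estimate is needed.
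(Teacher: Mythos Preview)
Your proposal is correct and follows essentially the same approach as the paper: both shift $\tilde u \defs u + d$, define $\tilde D(x,t,\rho) = D(x,t,\rho-d)$ and $\tilde S(x,t,\rho) = S(x,t,\rho-d)$, observe that these satisfy \eqref{eq:pw_scalar:D1}--\eqref{eq:pw_scalar:S} with $d=0$, and then apply the $d=0$ case to $\tilde u$. You are in fact more explicit than the paper in verifying the shifted initial-data bound \eqref{eq:pw_scalar:u0} and the $L^\pu$ bound \eqref{eq:pw_scalar:p} for $\tilde u$, and in tracking the dependence of the constants $\tilde L, \tilde M$; the paper glosses over these routine checks.
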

\begin{proof}
  The function $\wt u \defs u + d$ fulfills
  \begin{align*}
    \begin{cases}
      \wt u_t \le \nabla \cdot ( D(x, t, \wt u - d) \nabla \wt u + S(x, t, \wt u - d) f ) & \text{in $\Omega \times (0, T)$}, \\
      D(x, t, \wt u - d) \nabla \wt u + S(x, t, \wt u - d) f \le 0                           & \text{in $\partial \Omega \times (0, T)$}, \\
      \wt u(\cdot, 0) \le u_0 + d                                                                  & \text{in $\Omega$}.
    \end{cases}
  \end{align*}
  We set
  \begin{align*}
    \wt D(x, t, \rho) = D(x, t, \rho - d)
    \quad \text{and} \quad
    \wt S(x, t, \rho) = S(x, t, \rho - d)
  \end{align*}
  for $x \in \Ombar$, $t \in (0, T)$ and $\rho \in Q_{\tilde u} \defs Q_{u} + d \subseteq [d, \infty)$.
  Noting that \eqref{eq:pw_scalar:D1}--\eqref{eq:pw_scalar:S} with $d = 0$ hold for these $x, t, \rho$ when $(D, S)$ is replaced by $(\wt D, \wt S)$,
  we may apply Theorem~\ref{th:pw_scalar} with $d = 0$ to $\tilde u$,
  which yields $c_1 > 0$ with $u(x, t) \le \tilde u(x, t) \le c_1 |x|^{-\alpha}$ for all $x \in \Omega$ and all $t \in (0, T)$.
\end{proof}
 
Since \cite{FuestBlowupProfilesQuasilinear2020} proves Theorem~\ref{th:pw_scalar} for $m > \frac{n-2\pu}{n}$ and recalling \eqref{eq:pw_scalar:T_finite},
we may without loss of generality thus not only assume $d = 0$ but also $\ol T < \infty$ and $m < 1$.
Moreover, as in \cite{FuestBlowupProfilesQuasilinear2020},
we set $w(x, t) \defs |x|^\alpha u(x, t)$, aiming to obtain an $\leb\infty$ bound for $w$
which implies the desired estimate \eqref{eq:pw_ks:u_est}.
We start by stating the eventual outcome of a testing procedure performed in \cite{FuestBlowupProfilesQuasilinear2020}.
\begin{lemma}\label{lm:w_bdd_lp}
  There are $\tilde p \ge 1$, $\nu \gt 1$ and $C \gt 0$ with the property that
  \begin{align}\label{eq:w_bdd_lp:bdd}
    \sup_{t \in (0, T)} \intom w^p(\cdot, t) \le C
  \end{align}
  and
  \begin{align}\label{eq:w_bdd_lp:ddt}
        \ddt \intom w^p
    \le C p^\nu + C p^\nu \left( \intom w^{\frac{p}{2}} \right)^2
    \qquad \text{in $(0, T)$}
  \end{align}
  for all $p \ge \tilde p$.
\end{lemma}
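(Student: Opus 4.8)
The plan is to reproduce, with care, the weighted $L^p$ testing procedure of \cite{FuestBlowupProfilesQuasilinear2020}, verifying along the way that the hypothesis $m > \frac{n-2\pu}{n}$ is never used once $\ol T < \infty$ and that every constant remains polynomially bounded in $p$. By Lemma~\ref{lm:d_gt_0_red} and the case already treated in \cite{FuestBlowupProfilesQuasilinear2020}, we may assume $d = 0$, $\ol T < \infty$, $m < 1$ and, without loss of generality, $\beta > (m-q)\alpha$; recall also that $w = |x|^\alpha u$, so that $w(\cdot,0) = |x|^\alpha u_0 \le L$ on $\Omega$ by \eqref{eq:pw_scalar:u0}.

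First I would test the differential inequality in \eqref{prob:scalar_eq} against $|x|^{\alpha p} u^{p-1}$: since $|x|^{\alpha p} u^p = w^p$, the left-hand side becomes $\frac1p \ddt \intom w^p$, while the boundary integral over $\partial\Omega$ arising upon integration by parts carries the nonnegative weight $|x|^{\alpha p} u^{p-1}$ against the flux $(D\nabla u + Sf)\cdot\nu \le 0$ and may therefore be discarded. Writing $\nabla u = |x|^{-\alpha}(\nabla w - \alpha |x|^{-2} x w)$ and expanding $\nabla(|x|^{\alpha p} u^{p-1})$, the principal part of the diffusion term produces a nonnegative dissipation which, by the lower bound \eqref{eq:pw_scalar:D1} (here $d = 0$, $m < 1$), dominates a weighted Dirichlet integral of $w^{p/2}$ up to a factor that is a negative power of $p$; one application of Young's inequality, costing a positive power of $p$, absorbs into this dissipation the commutator terms generated by differentiating the weight $|x|^\alpha$. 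The drift term is handled via \eqref{eq:pw_scalar:S} and the weighted $L^\theta$-bound \eqref{eq:pw_scalar:f} on $f$, using Hölder's inequality with exponent $\theta$ and another Young inequality. What then remains on the right-hand side is a finite sum of expressions of the form $C p^{\nu_j} \intom |x|^{-\gamma_j} w^{r_j}$ with $r_j < p$ and $\gamma_j \ge 0$, together with the image $\intom |x|^{-\alpha\pu} w^{\pu} \le M$ of the mass bound \eqref{eq:pw_scalar:p}.

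Next I would interpolate each remainder integral, by a weighted Gagliardo--Nirenberg inequality, between the Dirichlet integral retained from the dissipation and $\intom w^{p/2}$; the parameter conditions \eqref{eq:pw_scalar:cond_m_q} and \eqref{eq:pw_scalar:cond_alpha} are exactly what guarantees that the interpolation exponents are admissible and that the singular weights are controlled. Absorbing the Dirichlet integral back to the left-hand side (tracking once more the power of $p$ incurred in the Young step) leaves precisely \eqref{eq:w_bdd_lp:ddt}. For \eqref{eq:w_bdd_lp:bdd}, since $|x|^\alpha$ is bounded on $\Omega$, Hölder's inequality and \eqref{eq:pw_scalar:p} give $\sup_{t\in(0,T)} \intom w^{s} \le C$ for every $s \in [1, \pu]$; using this as the base case, integrating \eqref{eq:w_bdd_lp:ddt} in time (here $\ol T < \infty$ and $w(\cdot,0) \le L$ enter) and iterating $p \mapsto 2p$ finitely often yields \eqref{eq:w_bdd_lp:bdd} for all $p \ge \tilde p$, for a suitable $\tilde p \ge 1$.

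The step I expect to be the main obstacle is the bookkeeping: one must keep every multiplicative constant polynomially bounded in $p$ through the repeated Young and Hölder splittings and, most delicately, through the Gagliardo--Nirenberg interpolation, whose interpolation parameter depends on $p$ and must be shown to stay in the admissible range while contributing only an algebraic factor. A secondary point is to confirm that $m > \frac{n-2\pu}{n}$ was used in \cite{FuestBlowupProfilesQuasilinear2020} only to make certain remainder terms summable in an infinite-time iteration, so that with $\ol T < \infty$ plain time-integrability suffices and that hypothesis can be dropped.
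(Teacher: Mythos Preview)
Your overall strategy --- reproduce the weighted $L^p$ testing and Gagliardo--Nirenberg interpolation from \cite{FuestBlowupProfilesQuasilinear2020} and check that $m > \frac{n-2\pu}{n}$ is only needed to produce the dissipative term $-\intom w^p$ on the left (dispensable once $\ol T < \infty$) --- is exactly the paper's approach. The paper is more economical in that it does not rederive the testing procedure but quotes from \cite[Section~2]{FuestBlowupProfilesQuasilinear2020} the intermediate inequality
\[
  \ddt \intom w^p \le c_1 p^\nu + c_1 p^\nu \left(\intom w^{(p+m-1)s-1}\right)^{1/s}
\]
with a \emph{free} parameter $s \in (0, \tfrac{2n}{n-2})$, and then specializes twice: $s = \tfrac12$ gives \eqref{eq:w_bdd_lp:ddt} (using $m < 1$ to bound $w^{(p+m-3)/2} \le (w+1)^{p/2}$), while $s = \tfrac{2}{p+m-1}$ puts $\intom w$ on the right-hand side and delivers \eqref{eq:w_bdd_lp:bdd} after a single time integration.

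Your route to \eqref{eq:w_bdd_lp:bdd}, by contrast, has a gap. You interpolate only against $\intom w^{p/2}$ and then iterate $p \mapsto 2p$ from the base case $\intom w^s \le C$ for $s \in [1, \pu]$. But the testing procedure forces $\tilde p \ge 3-m$, and in the regime at hand ($m < 1$, and $\pu = 1$ in the Keller--Segel application of Theorem~\ref{th:pw_ks}) this gives $\tilde p > 2 = 2\pu$; the very first application of \eqref{eq:w_bdd_lp:ddt} at $p = \tilde p$ then requires control of $\intom w^{\tilde p/2}$ with $\tfrac{\tilde p}{2} > \pu$, which the base case does not supply. The paper's second choice $s = \tfrac{2}{p+m-1}$ sidesteps this by jumping from $L^1$ to $L^p$ in one step; in your framework the fix is to run the Gagliardo--Nirenberg interpolation once more with the low norm taken as $\intom w$ rather than $\intom w^{p/2}$.
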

\begin{proof}
  Following \cite[Section~2]{FuestBlowupProfilesQuasilinear2020}, we see that for all $s \in (0, \frac{2n}{n-2})$,
  there are $\tilde p \ge 3-m$, $\nu > 1$ and $c_1 > 0$ such that for all $p \ge \tilde p$,
  \begin{align}\label{eq:w_bdd_lp:ddt1}
        \ddt \intom w^p
    \le c_1 p^\nu + c_1 p^\nu \left( \intom w^{(p+m-1)s - 1} \right)^\frac1s
    \qquad \text{in $(0, T)$}.
  \end{align}
  Indeed, there are only two differences compared to \cite{FuestBlowupProfilesQuasilinear2020}.
  First, in \cite{FuestBlowupProfilesQuasilinear2020} only $Q_u = [0, \infty)$ is considered.
  However, it is evidently sufficient to require \eqref{eq:pw_scalar:D1}--\eqref{eq:pw_scalar:S} to hold for $\rho = u(x, t)$ only.
  Second, we do not require $m > \frac{n-2\pu}{n}$,
  meaning that we cannot make use of \cite[Lemma~2.7]{FuestBlowupProfilesQuasilinear2020}, i.e.,
  of estimates of the form $\intom w^p \le \eps \intom |\nabla \varphi|^2 + C (\intom \varphi^{\tilde s})^\frac1{\tilde s}$,
  where $\varphi = |x|^{-\frac{(m-1)\alpha}{2}} w^\frac{p+m-1}{2}$.
  This has only been used in \cite[Lemma~2.8]{FuestBlowupProfilesQuasilinear2020} to obtain \eqref{eq:w_bdd_lp:ddt1} with an additional summand $\intom w^p$ on the left-hand side.
  Not containing such a dissipation term, \eqref{eq:w_bdd_lp:ddt1} is hence also valid for $m \le \frac{n-2\pu}{n}$.
  
  A first application of \eqref{eq:w_bdd_lp:ddt1} with $s = \frac{2}{p+m-1} > 0$ yields \eqref{eq:w_bdd_lp:bdd},
  as \eqref{eq:pw_scalar:p} entails $\sup_{t \in (0, T)} \|w(\cdot, t)\|_{\leb1} \le \max\{|x|^\alpha \mid x \in \Ombar\} M^\frac{1}{\pu} |\Omega|^{1+\frac{\pu-1}{\pu}}$ and since $T$ is finite.
  Moreover, by taking $s = \frac12$ in \eqref{eq:w_bdd_lp:ddt1}, we conclude that
  \begin{align*}
          \ddt \intom w^p - c_1 p^\nu
    &\le  c_1 p^\nu \left( \intom w^{\frac{p+m-1}{2}-1} \right)^2
    \le   c_1 p^\nu \left( \intom (w+1)^\frac{p}{2} \right)^2
    \le   2^\frac{p-2}{2} c_1 p^\nu \left( \intom w^\frac{p}{2} + |\Omega| \right)^2 \\
    &\le  2^\frac{p}{2} c_1 p^\nu \left( \intom w^\frac{p}{2} \right)^2
          + 2^\frac{p}{2} |\Omega| c_1 p^\nu
    \qquad \text{in $(0, T)$ for all $p \ge \tilde p$},
  \end{align*}
  where we have made use of the assumption $m < 1$.
\end{proof}

Unlike \cite[(2.19)]{FuestBlowupProfilesQuasilinear2020}, \eqref{eq:w_bdd_lp:ddt} does not contain any dissipative term,
so that we need to adapt the Moser-type iteration carried out in \cite[Lemma~2.10]{FuestBlowupProfilesQuasilinear2020} to the present situation.
\begin{lemma}\label{lm:w_bdd_l_infty}
  There is $C \gt 0$ such that
  \begin{align} \label{eq:w_bdd_l_infty:statement}
    \|w\|_{L^\infty(\Omega \times (0, T))} \lt C.
  \end{align}
\end{lemma}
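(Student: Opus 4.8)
The plan is to run a Moser-type iteration along the exponents $p_k \defs 2^k \tilde p$, $k \in \N_0$, where $\tilde p \ge 1$, $\nu > 1$ and $C > 0$ are the constants furnished by Lemma~\ref{lm:w_bdd_lp} (and where, after enlarging $C$, we may assume $TC \ge 1$), tracking the quantities $M_k \defs \sup_{t \in (0, T)} \intom w^{p_k}(\cdot, t)$, which are finite by \eqref{eq:w_bdd_lp:bdd}. Since $p_{k-1} = p_k/2$, integrating \eqref{eq:w_bdd_lp:ddt} at exponent $p_k$ over $(0, t) \subseteq (0, T)$, bounding $\intom w^{p_{k-1}}(\cdot, s) \le M_{k-1}$, using that $T < \infty$, and observing that $w(\cdot, 0) = |x|^\alpha u(\cdot, 0) \le |x|^\alpha u_0 \le L$ in $\Omega$ by \eqref{eq:pw_scalar:u0} will yield the recursion
\begin{align*}
  M_k \le L^{p_k} |\Omega| + TCp_k^\nu + TCp_k^\nu M_{k-1}^2 \qquad \text{for all } k \ge 1,
\end{align*}
while $M_0 = \sup_{t \in (0, T)} \intom w^{\tilde p}(\cdot, t) < \infty$ is exactly \eqref{eq:w_bdd_lp:bdd} at $p = \tilde p$.

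Next, I would set $\phi_k \defs M_k^{1/p_k}$ and show $\sup_k \phi_k < \infty$. The decisive observation is that the ``junk'' terms $L^{p_k}|\Omega|$ and $TCp_k^\nu$ grow at most geometrically in $p_k$ and can be absorbed via a short case distinction with threshold $\Lambda \defs \max\{1, L \max\{|\Omega|, 1\}\}$: if $\phi_{k-1} \ge \Lambda$, then $M_{k-1}^2 = M_{k-1}^{p_k/p_{k-1}} \ge \Lambda^{p_k} \ge L^{p_k}|\Omega|$ and $M_{k-1}^2 \ge 1$, so $M_k \le 3TCp_k^\nu M_{k-1}^2$; if instead $\phi_{k-1} < \Lambda$, then $M_{k-1}^2 < \Lambda^{p_k}$ and hence $M_k \le 3TCp_k^\nu \Lambda^{p_k}$. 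Either way, $\phi_k \le \tilde b_k \max\{\phi_{k-1}, 3\Lambda\}$ with $\tilde b_k \defs \max\{(3TC)^{1/p_k} p_k^{\nu/p_k}, 1\} \ge 1$. Because $p_k = 2^k \tilde p$ grows geometrically, $\sum_k \frac{1 + \ln p_k}{p_k} < \infty$, so $B \defs \prod_{k \ge 1} \tilde b_k < \infty$; then an elementary induction for the running maxima $T_k \defs \max\{3\Lambda, \phi_0, \dots, \phi_k\}$ (which satisfy $T_k \le \tilde b_k T_{k-1}$, using $\tilde b_k \ge 1$) gives $\phi_k \le B \max\{3\Lambda, \phi_0\}$ for every $k$. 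Finally, for each fixed $t \in (0, T)$ we have $\|w(\cdot, t)\|_{L^{p_k}(\Omega)} = \big(\intom w^{p_k}(\cdot, t)\big)^{1/p_k} \le \phi_k \le B \max\{3\Lambda, \phi_0\}$, so letting $k \to \infty$ and using $|\Omega| < \infty$ together with $w(\cdot, t) \in L^\infty(\Omega)$ (which holds since $u(\cdot, t) \in \con0$) yields $\|w(\cdot, t)\|_{L^\infty(\Omega)} \le B \max\{3\Lambda, \phi_0\}$; combined with $w(\cdot, 0) \le L$ this establishes \eqref{eq:w_bdd_l_infty:statement}.

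The main obstacle is precisely that \eqref{eq:w_bdd_lp:ddt}, unlike its counterpart in \cite{FuestBlowupProfilesQuasilinear2020}, carries no dissipative term, so the textbook Moser iteration—which would close multiplicatively and even produce $T$-independent bounds—is unavailable, and one is forced to integrate the ODI over the (finite) time interval, thereby picking up the additive constants $L^{p_k}|\Omega|$ and $TCp_k^\nu$. Making the iteration converge nonetheless is the crux, and it works only because these terms grow at most geometrically in $p_k$: this is exactly what lets the case split above keep the per-step amplification factors $\tilde b_k$ (hence their product) finite, and it is also the structural reason behind the restriction to $\overline T < \infty$ (equivalently $m < 1$) made at the start of this subsection.
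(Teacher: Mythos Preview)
Your proposal is correct and follows essentially the same Moser-type iteration as the paper: both integrate the dissipation-free ODI \eqref{eq:w_bdd_lp:ddt} over the finite time interval, set up the recursion along $p_k = 2^k\tilde p$, absorb the additive junk terms $L^{p_k}|\Omega|$ and $TCp_k^\nu$ (which grow only geometrically in $p_k$), and conclude via convergence of $\sum_k p_k^{-1}\ln p_k$. The only cosmetic difference is bookkeeping---the paper bounds the three-term sum by $3$ times the maximum and then argues by the dichotomy ``$A_j \le c_4$ along a subsequence, or $A_j > c_4$ eventually'', whereas you run the same content through a running-maximum sequence $T_k$; both devices serve the identical purpose of handling the case split between the iterative term and the constant threshold.
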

\begin{proof}
  Letting $\tilde p \ge 1$ and $\nu \gt 1$ be as given by Lemma~\ref{lm:w_bdd_lp} and setting $w_0 \defs w(\cdot, 0)$,
  we infer from \eqref{eq:w_bdd_lp:ddt} upon integrating that there is $c_1 > 0$ with
  \begin{align*}
          \intom w^p(\cdot, t)
    &\le  \intom w_0^p
          + T c_1 p^\nu
          + c_1 p^\nu \sup_{s \in (0, t)} \int_0^t \left( \intom w^\frac{p}{2}(\cdot, s) \right)^2 \\
    &\le  3 \max\left\{
            \intom w_0^p,\,
            \ol T c_1 p^\nu,\,
            \ol T c_1 p^\nu \sup_{s \in (0, t)} \left( \intom w^\frac{p}{2}(\cdot, s) \right)^2
          \right\}
  \end{align*}
  for all $t \in (0, T)$ and all $p \ge \tilde p$.
  Thus, setting $p_j \defs 2^j \tilde p$, $p_\infty \defs \infty$
  and $A_j \defs \sup_{t \in (0, T)} \|w(\cdot, t)\|_{\leb{p_j}}$ for $j \in \N_0 \cup \{\infty\}$
  as well as $c_2 \defs 3 \ol T c_1$,
  we have
  \begin{align*}
        A_j
    \le \max\left\{
          3^\frac1{p_j} \|w_0\|_{\leb{p_j}},\,
          (c_2 p_j^\nu)^\frac1{p_j},\,
          (c_2 p_j^\nu)^\frac1{p_j} A_{j-1}
        \right\}
    \qquad \text{for all $j \in \N$}.
  \end{align*}
  We note that by \eqref{eq:w_bdd_lp:bdd}, there is $c_3 > 0$ with $A_0 \le c_3$.
  Moreover, as $3^\frac1{p_j} \|w_0\|_{\leb{p_j}} \le 3 \|w_0\|_{\leb \infty} |\Omega|^\frac1{p_j} \le 3L \max\{|\Omega|, 1\}$ for $j \in \N$
  and since $\lim_{p \to \infty} (c_2 p^\nu)^\frac1p = 1$, there is $c_4 > 0$ with 
  $A_j \le \max\left\{c_4, (c_2 p_j^\nu)^\frac1{p_j} A_{j-1} \right\}$ for all $j \in \N$.
   
  Without loss of generality,
  we may assume that there is $j_0 \in \N$ with $A_j \gt c_4$ for all $j \ge j_0$,
  as else there would be a sequence $(j_k)_{k \in \N}$
  with $j_k \ra \infty$ as $k \ra \infty$ and $A_{j_k} \le c_4$ for all $k \in \N$,
  already implying $A_\infty = \limsup_{k \ra \infty} A_{j_k} \le c_4$ and hence \eqref{eq:w_bdd_l_infty:statement}.

  Therefore, $A_j \le (c_2 p_j^\nu)^\frac1{p_j} A_{j-1}$ for all $j \gt j_0$ and, if we take $j_0$ as small as possible, $A_{j_0} \le \max\{c_3, c_4\}$.
  Due to $p_j = 2^j \tilde p \le (2 \tilde p)^j$ for $j \in \N_0$, an induction argument yields
  \begin{align*}
          A_j
    &\le  \left(\prod_{k=j_0+1}^{j} (c_2 p_k^\nu)^\frac1{p_k} \right) A_{j_0} \\
    &\le  c_2^{\tilde p^{-1} \sum_{k=j_0+1}^j 2^{-k}} \cdot (2 \tilde p)^{\tilde p^{-1} \nu \sum_{k=j_0+1}^j k2^{-k}} \cdot A_{j_0} \\
    &\le  c_2^{\tilde p^{-1} \sum_{k=0}^\infty 2^{-k}} \cdot (2 \tilde p)^{\tilde p^{-1} \nu \sum_{k=0}^\infty k2^{-k}} \cdot \max\{c_3, c_4\}
    \sfed c_5
  \end{align*}
  for all $j \gt j_0$ and hence $A_\infty \le c_5$.
\end{proof}

\begin{proof}[Proof of Theorem~\ref{th:pw_scalar}]
  The definition of $w$ asserts $u(x, t) = |x|^{-\alpha} w(x, t) \le |x|^{-\alpha} \|w\|_{L^\infty(\Omega \times (0, t))}$
  for all $x \in \Omega$ and all $t \in (0, T)$, so that the theorem follows from Lemma~\ref{lm:w_bdd_l_infty}.
\end{proof}

\subsection{Relaxed conditions for \tops{$v_0$}{v0}}
Similarly as in \cite{FuestBlowupProfilesQuasilinear2020}, Theorem~\ref{th:pw_ks} will follow from Theorem~\ref{th:pw_scalar}.
That is, we essentially need to show that solutions of the heat equation with a force term controlled in $L^\infty((0, T); \leb1)$ satisfy \eqref{eq:pw_scalar:f} for appropriate parameters
and, importantly, that these estimates do not depend on $\|v_0\|_{\sob1\infty}$ but on $v_0$ only by the quantities in \eqref{eq:pw_ks:cond_init_v}.
To that end, we adapt the reasoning of \cite{WinklerBlowupProfilesLife2020}.

Throughout this subsection, we fix $n \ge 2$, $R > 0$, $\Omega = B_R(0)$,
\begin{equation}\label{eq:heat_inhom:cond_p_beta}
  p \in \left[1, \frac{n}{n-1}\right)
  \quad \text{and} \quad
  \beta > \frac{n}{p}.
\end{equation}
We consider classical solutions
\begin{align*}
  v \in C^0(\Ombar \times [0, T)) \cap C^{2, 1}(\Ombar \times (0, T))
\end{align*}
of
\begin{align}\label{prob:heat_inhom}
  \begin{cases}
    v_t = \Delta v - v + g(x, t) & \text{in $\Omega \times (0, T)$}, \\
    \partial_\nu v = 0           & \text{on $\partial \Omega \times (0, T)$}, \\
    v(\cdot, 0) = v_0            & \text{in $\Omega$},
  \end{cases}
\end{align}
where
\begin{alignat}{2}
   & g \in C^0(\Ombar \times [0, T))     && \quad \text{is radially symmetric and nonnegative with} \quad \sup_{t \in (0, T)} \|g(\cdot, t)\|_{\leb1} \le M, \label{eq:heat_inhom:cond_g}\\
   & v_0 \in \sob1p                      && \quad \text{is radially symmetric, nonnegative and such that} \quad \|v_0\|_{\sob1p} \le M \quad \text{and} \label{eq:heat_inhom:cond_v0_w1p}\\
   & |x|^\beta v_0 \in \sob1\infty       && \quad \text{with} \quad \||x|^\beta v_0\|_{\sob1\infty} \le M. \label{eq:heat_inhom:cond_xv_0_w1infty}
\end{alignat}

\begin{lemma}\label{lm:heat_inhom_w1p}
  There exists $C > 0$ with the following property:
  Let $M > 0$, $T \in (0, \infty]$ and $g, v_0$ as in \eqref{eq:heat_inhom:cond_g} and \eqref{eq:heat_inhom:cond_v0_w1p}.
  Then the classical solution $v$ of \eqref{prob:heat_inhom} fulfills
  \begin{align*}
    \|v(\cdot, t)\|_{\sob1p} \le C M
    \qquad \text{for all $t \in (0, T)$}.
  \end{align*}
\end{lemma}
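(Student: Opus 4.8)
The plan is to exploit the variation-of-constants representation
\[
  v(\cdot, t) = e^{t(\Delta - 1)} v_0 + \int_0^t e^{(t - s)(\Delta - 1)} g(\cdot, s) \ds,
  \qquad t \in (0, T),
\]
which is valid because $v$ is a classical solution of \eqref{prob:heat_inhom}, and to estimate the two summands separately by means of well-known smoothing properties of the Neumann heat semigroup $(e^{t(\Delta - 1)})_{t \ge 0}$ on the ball $\Omega$. For the first summand I would only use that this semigroup is bounded on $\sob1p$, i.e.\ that there is $c_1 > 0$ with $\|e^{t(\Delta - 1)} \varphi\|_{\sob1p} \le c_1 \|\varphi\|_{\sob1p}$ for all $t \ge 0$ and all $\varphi \in \sob1p$; together with \eqref{eq:heat_inhom:cond_v0_w1p} this already gives $\|e^{t(\Delta - 1)} v_0\|_{\sob1p} \le c_1 M$ for all $t \in (0, T)$.

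For the Duhamel term the key ingredient is the standard $\leb1$--$\sob1p$ estimate
\[
  \|e^{\tau(\Delta - 1)} \varphi\|_{\sob1p}
  \le c_2 \big(1 + \tau^{-\frac12 - \frac n2\left(1 - \frac1p\right)}\big) e^{-\lambda \tau} \|\varphi\|_{\leb1},
  \qquad \tau > 0,\ \varphi \in \leb1,
\]
with suitable $c_2, \lambda > 0$, where the exponential factor stems from the zeroth-order term $-v$ in \eqref{prob:heat_inhom}. Applying this with $\varphi = g(\cdot, s)$, invoking \eqref{eq:heat_inhom:cond_g} and integrating over $s \in (0, t)$, I would obtain, after substituting $\sigma = t - s$,
\[
  \Big\| \int_0^t e^{(t - s)(\Delta - 1)} g(\cdot, s) \ds \Big\|_{\sob1p}
  \le c_2 M \intninf \big(1 + \sigma^{-\frac12 - \frac n2\left(1 - \frac1p\right)}\big) e^{-\lambda \sigma} \dsigma
  \sfed c_3 M
\]
for all $t \in (0, T)$. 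Adding both contributions then yields $\|v(\cdot, t)\|_{\sob1p} \le (c_1 + c_3) M$ for all $t \in (0, T)$, which is the assertion with $C \defs c_1 + c_3$.

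The only genuinely nontrivial point is the finiteness of $c_3$: the factor $e^{-\lambda \sigma}$ ensures convergence of the $\sigma$-integral at $\infty$ (and, importantly, renders $c_3$ independent of $T$, which matters because $T = \infty$ is admissible), while integrability at $\sigma = 0$ holds \emph{precisely} because the hypothesis \eqref{eq:heat_inhom:cond_p_beta}, namely $p < \frac{n}{n-1}$, is equivalent to $\frac12 + \frac n2(1 - \frac1p) < 1$; this borderline condition is exactly why $p$ must be chosen below $\frac{n}{n-1}$. The two semigroup estimates themselves are classical and may either be quoted from the literature or derived from Gaussian bounds for the Neumann heat kernel on the ball together with the $\sob1p$-boundedness of the heat semigroup. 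Finally, I note that hypothesis \eqref{eq:heat_inhom:cond_xv_0_w1infty} on $|x|^\beta v_0$ is not used for this particular lemma (it will enter only in the subsequent pointwise estimates), and that radial symmetry of $g$ and $v_0$ plays no role here either.
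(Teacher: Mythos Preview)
Your proposal is correct and is precisely the semigroup argument the paper has in mind; the paper's own proof consists of a single sentence pointing to the $L^p$--$L^q$ estimates in \cite[Lemma~1.3]{WinklerAggregationVsGlobal2010}, and you have simply spelled out those details (variation-of-constants, $\sob1p$-boundedness for the homogeneous part, the $\leb1$--$\sob1p$ smoothing estimate for the Duhamel term, and the observation that $p<\frac{n}{n-1}$ is exactly what makes the time integral converge at $0$).
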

\begin{proof}
  Due to the restriction $p \in [1, \frac{n}{n-1})$, this can rapidly be seen by means of semigroup arguments,
  for instance based on the $L^p$-$L^q$ estimates collected in \cite[Lemma~1.3]{WinklerAggregationVsGlobal2010}.
\end{proof}

\begin{lemma}\label{lm:heat_inhom_v_pw}
  There exists $C > 0$ with the following property:
  Let $M > 0$, $T \in (0, \infty]$ and $g, v_0$ as in \eqref{eq:heat_inhom:cond_g} and \eqref{eq:heat_inhom:cond_v0_w1p}.
  Then the classical solution $v$ of \eqref{prob:heat_inhom} fulfills
  \begin{align*}
    v(r, t) \le CM r^{-\frac{n-p}{p}}
    \qquad \text{for all $r \in (0, R)$ and all $t \in (0, T)$}.
  \end{align*}
\end{lemma}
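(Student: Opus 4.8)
The plan is to read off the pointwise bound directly from the uniform $W^{1,p}$ estimate provided by Lemma~\ref{lm:heat_inhom_w1p}, combined with a one-dimensional Sobolev-type argument that exploits the radial symmetry of $v$ and the subcriticality $p<\frac{n}{n-1}$ in \eqref{eq:heat_inhom:cond_p_beta}. Note that the extra hypothesis $|x|^\beta v_0\in W^{1,\infty}$ from \eqref{eq:heat_inhom:cond_xv_0_w1infty} will play no role here; only the $W^{1,p}$ control on $v_0$ enters.

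First I would invoke Lemma~\ref{lm:heat_inhom_w1p} to fix $c_0>0$ with $\|v(\cdot,t)\|_{W^{1,p}(\Omega)}\le c_0M$ for all $t\in(0,T)$, which in polar coordinates reads $\int_0^R s^{n-1}\big(v^p(s,t)+|v_s(s,t)|^p\big)\,ds\le\omega_n^{-1}(c_0M)^p$. Fixing $t\in(0,T)$ and using that $s\mapsto v(s,t)$ is $C^1$ on $[0,R]$, I would write, for $0<r<\rho<R$, $v(r,t)\le v(\rho,t)+\int_r^\rho|v_s(s,t)|\,ds$ and estimate the integral by Hölder's inequality (with conjugate exponent $p'=\frac{p}{p-1}$, read as $\infty$ when $p=1$) after splitting the Jacobian as $s^{n-1}=s^{(n-1)/p}\cdot s^{(n-1)/p'}$:
\[
  \int_r^\rho|v_s(s,t)|\,ds
  \le\Big(\int_0^R s^{n-1}|v_s(s,t)|^p\,ds\Big)^{1/p}\Big(\int_r^R s^{-\frac{n-1}{p-1}}\,ds\Big)^{1/p'}.
\]
Since $p<\frac{n}{n-1}$ and $n\ge2$ force $n>p$, and hence $\frac{n-1}{p-1}>1$, the second factor is bounded by $c_1r^{\frac1{p'}(1-\frac{n-1}{p-1})}=c_1r^{-\frac{n-p}{p}}$, so $\int_r^\rho|v_s(s,t)|\,ds\le c_2Mr^{-\frac{n-p}{p}}$ with $c_1,c_2$ independent of $t$.

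It then remains to choose $\rho$ suitably and control $v(\rho,t)$. Averaging over the annulus $B_R\setminus B_{R/2}$, for each $t$ there is some $\rho_t\in(\frac R2,R)$ with $\rho_t^{n-1}v^p(\rho_t,t)\le\frac2R\int_{R/2}^R s^{n-1}v^p(s,t)\,ds\le\frac2R\omega_n^{-1}(c_0M)^p$, so $v(\rho_t,t)\le c_3M$ with $c_3=c_3(n,R,c_0)$. Inserting $\rho=\rho_t$ gives, for $r\in(0,\frac R2)$, $v(r,t)\le c_3M+c_2Mr^{-\frac{n-p}{p}}\le\big(c_2+c_3(\frac R2)^{\frac{n-p}{p}}\big)Mr^{-\frac{n-p}{p}}$, using $n>p$ so that $r\mapsto r^{-\frac{n-p}{p}}$ is bounded below on $(0,\frac R2)$. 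For $r\in[\frac R2,R)$ the claim is immediate: the same Hölder bound (now with no singularity at the lower limit) together with $v(\rho_t,t)\le c_3M$ gives $\|v(\cdot,t)\|_{L^\infty((R/2,R))}\le c_4M$, while $r^{-\frac{n-p}{p}}\ge R^{-\frac{n-p}{p}}$ there. Taking $C$ to be the maximum of the resulting constants concludes the proof. I do not expect a genuine obstacle; the only points demanding a little care are the $t$-uniformity of all constants—automatic once Lemma~\ref{lm:heat_inhom_w1p} is in force—and the borderline case $p=1$, where the second Hölder factor is simply $\sup_{s\in(r,R)}s^{-(n-1)}=r^{-(n-1)}=r^{-\frac{n-p}{p}}$.
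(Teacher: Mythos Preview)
Your proof is correct and follows essentially the same route as the paper, which defers to \cite[Lemma~3.2]{WinklerFinitetimeBlowupHigherdimensional2013}: a uniform $W^{1,p}$ bound on $v$ (Lemma~\ref{lm:heat_inhom_w1p}) combined with the one-dimensional radial Sobolev-type inequality obtained via H\"older on $\int_r^\rho |v_s|\,ds$. The only cosmetic difference is that the paper anchors the estimate through the $L^1$ bound $\intom v \le c_1 M$ from the maximum principle, whereas you extract $v(\rho_t,t)\le c_3 M$ directly from the $L^p$ part of the $W^{1,p}$ bound by averaging over the annulus $(\tfrac{R}{2},R)$; both serve the identical purpose of controlling $v$ at a single reference radius.
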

\begin{proof}
  This follows by applying minor modifications to the proof of \cite[Lemma~3.2]{WinklerFinitetimeBlowupHigherdimensional2013}:
  Replace $u$ by $g$, apply Lemma~\ref{lm:heat_inhom_w1p} instead of \cite[Lemma~3.1]{WinklerFinitetimeBlowupHigherdimensional2013} in \cite[(3.5)]{WinklerFinitetimeBlowupHigherdimensional2013}
  and note that $\intom v \le \max\{\intom v_0, \sup_{t \in (0, T)} \|g(\cdot, t)\|_{\leb1}\} \le c_1 M$ in $(0, T)$ for some $c_1 > 0$ by the maximum principle.
\end{proof}

\begin{lemma}\label{lm:x_beta_nabla_v_est}
  Let $M > 0$, $\theta \in (1, \infty]$, $T \in (0, \infty]$ and, if $\theta = \infty$, $\tilde \alpha > \beta$.
  Then there exists $C > 0$ with the following property:
  Suppose that $g, v_0$ are as in \eqref{eq:heat_inhom:cond_g} and \eqref{eq:heat_inhom:cond_v0_w1p},
  and that, if $\theta = \infty$, 
  \begin{align*}
    g(x, t) \le M |x|^{-\tilde \alpha} \quad \text{for all $x \in \Omega$ and $t \in (0, T)$}.
  \end{align*}
  Then the classical solution $v$ of \eqref{prob:heat_inhom} fulfills
  \begin{align*}
    \sup_{t \in (0, T)} \||x|^{\beta} \nabla v(\cdot, t)\|_{\leb\theta} \le C.
  \end{align*}
\end{lemma}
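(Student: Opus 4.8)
The plan is to follow the approach of \cite{WinklerBlowupProfilesLife2020}. Since $g$ and $v_0$ are radially symmetric, so is $v$, and I would work with $r=|x|$ and the variation-of-constants representation
\begin{align*}
  v(\cdot,t)=\ure^{-t}\ure^{t\Delta}v_0+\int_0^t\ure^{-(t-s)}\ure^{(t-s)\Delta}g(\cdot,s)\ds ,\qquad t\in(0,T),
\end{align*}
where $(\ure^{t\Delta})_{t\ge0}$ is the Neumann heat semigroup on $\Omega=B_R(0)$. Differentiating in $r$ and inserting the standard Gaussian-type bounds for the Neumann heat kernel and its gradient — which hold for $\tau\le1$, while for $\tau>1$ the accompanying factor $\ure^{-\tau}$ makes the corresponding contributions harmless — I would estimate $r^\beta|v_r(r,t)|=|x|^\beta|\nabla v(x,t)|$ via this representation, aiming for a bound uniform in $t\in(0,T)$.

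For the contribution of $g$, writing $\tau=t-s$, I would split the spatial integral defining $\nabla\ure^{\tau\Delta}g(\cdot,s)$ at $|y|=\tfrac{|x|}2$. On the inner part $\{|y|\le\tfrac{|x|}2\}$ one has $|x-y|\ge\tfrac{|x|}2$, so the Gaussian is dominated by $\ure^{-c|x|^2/\tau}$; using $\sup_{r>0}r^{\beta+1}\ure^{-cr^2/\tau}\le C\tau^{\frac{\beta+1}2}$ and $\int_{\{|y|\le|x|/2\}}g(\cdot,s)\le\|g(\cdot,s)\|_{\leb1}\le M$, this part of $|x|^\beta|\nabla v(\cdot,t)|$ is controlled by
\begin{align*}
  CM\int_0^t\ure^{-(t-s)}(t-s)^{\frac{\beta-n-1}{2}}\ds ,
\end{align*}
which is finite and bounded uniformly in $x$ and $t\in(0,T)$ precisely because $\beta>\tfrac np>n-1$ by \eqref{eq:heat_inhom:cond_p_beta}; in particular this portion satisfies an $L^\infty$ bound. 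On the outer part $\{|y|>\tfrac{|x|}2\}$ only $|x|^\beta\le 2^\beta R^\beta$ is available, so this piece essentially reduces to controlling $\nabla v$ itself on annuli bounded away from the origin. For $\theta<\infty$ I would do this by combining Lemma~\ref{lm:heat_inhom_w1p} (which gives $\|\nabla v(\cdot,t)\|_{\leb p}\le CM$ uniformly in $t$) with the radial structure and $\sup_s\|g(\cdot,s)\|_{\leb1}\le M$; for $\theta=\infty$ I would additionally invoke $g(x,t)\le M|x|^{-\tilde\alpha}$, which renders $g$ bounded on every annulus $\{\rho_0<|x|<R\}$, so that interior parabolic gradient estimates on such annuli — fed with the pointwise bound $v(r,t)\le CMr^{-\frac{n-p}{p}}$ of Lemma~\ref{lm:heat_inhom_v_pw} and using $\frac{n-p}p<\beta-1$ — yield $|x|^\beta|\nabla v(x,t)|\le CM$ up to the origin (distinguishing, if necessary, whether $\tilde\alpha$ lies below or above $n$, in the latter case again exploiting $\sup_t\|g(\cdot,t)\|_{\leb1}\le M$).

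The contribution of $v_0$, namely $\ure^{-t}\nabla\ure^{t\Delta}v_0$, I would treat analogously, now using $\||x|^\beta v_0\|_{\sob1\infty}\le M$ and $\|v_0\|_{\sob1p}\le M$ in place of $\|g(\cdot,s)\|_{\leb1}$; the latter in particular gives $v_0(r)\le CMr^{-\frac{n-p}{p}}$ (a one-dimensional Hardy-type bound, exactly as in the proof of Lemma~\ref{lm:heat_inhom_v_pw}), so that near the origin the corresponding part of $|x|^\beta|\nabla v|$ carries a strictly positive power of $r$ — here the smoothing of the semigroup compensates for the missing time integration — while away from the origin it is controlled by $\||x|^\beta v_0\|_{\sob1\infty}$, and for $t\ge1$ the factor $\ure^{-t}$ together with $L^1$--$L^\theta$ smoothing makes this term negligible. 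Combining the estimates for the two contributions yields the claim.

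I expect the main obstacle to be the treatment of the outer part $\{|y|>\tfrac{|x|}2\}$, and in particular the case $\theta=\infty$, where the interplay of the pointwise bound on $g$, the pointwise bound on $v$ (Lemma~\ref{lm:heat_inhom_v_pw}), the $L^1$-bound on $g$, and the weight $|x|^\beta$ has to be tracked carefully; the remaining bookkeeping amounts to checking that after multiplying by $|x|^\beta$ every power of $r=|x|$ that occurs is nonnegative (which is where $\beta>\tfrac np$ enters) and every power of $t-s$ that occurs is integrable near $s=t$ (which is where $\beta>n-1$, i.e.\ $p<\tfrac n{n-1}$, enters), uniformly in $t\in(0,T)$, including $T=\infty$.
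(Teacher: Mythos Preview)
You correctly identify the essential content of the paper's argument, which is a proof-by-reference: follow \cite[Lemma~3.4]{WinklerBlowupProfilesLife2020} verbatim, modified only by (i) taking $\kappa=\frac{n-p}{p}$ as supplied by Lemma~\ref{lm:heat_inhom_v_pw} and checking that $n-2<\kappa<\beta-1$ holds thanks to \eqref{eq:heat_inhom:cond_p_beta}, and (ii) observing that in Winkler's argument the initial data enter only through the quantity $\|r^\beta v_0\|_{W^{1,\infty}((0,R))}$, for which \eqref{eq:heat_inhom:cond_xv_0_w1infty} is precisely what is needed in place of a full $\sob1\infty$ bound on $v_0$. You invoke both of these ingredients, so at that level your proposal matches the paper.

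The concrete mechanism you sketch, however, is not quite that of \cite{WinklerBlowupProfilesLife2020} and contains a gap. Your inner-region estimate on $\{|y|\le\tfrac{|x|}{2}\}$ is fine and even yields an $L^\infty$ bound there. But on the outer region $\{|y|>\tfrac{|x|}{2}\}$, with only $\|g(\cdot,s)\|_{\leb1}\le M$ available, replacing $|x|^\beta$ by a constant and applying Young's convolution inequality leaves a time factor $\tau^{-\frac12-\frac{n}{2}(1-\frac1\theta)}$, integrable near $\tau=0$ only for $\theta<\frac{n}{n-1}$; Lemma~\ref{lm:heat_inhom_w1p} likewise supplies merely $\nabla v\in\leb p$ with $p<\frac{n}{n-1}$, and radial symmetry by itself does not upgrade this to $|x|^\beta\nabla v\in\leb\theta$ for large $\theta$. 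Winkler's proof circumvents this obstruction by working directly with the radial PDE and the pointwise bound $v\le c_1 r^{-\kappa}$ rather than through Gaussian kernel estimates---which is also why the condition $n-2<\kappa<\beta-1$ enters. A purely semigroup-based route might be salvageable via iteration, but as written the outer-part step for large finite $\theta$ is incomplete.
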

\begin{proof}
  Apart from the following two modifications, this can be proven by a near verbatim copy of the proof of \cite[Lemma~3.4]{WinklerBlowupProfilesLife2020}.
  First, we set $\kappa = \frac{n-p}{p}$ and note that by Lemma~\ref{lm:heat_inhom_v_pw}, there is $c_1 > 0$ such that $v(r, t) \le c_1 r^{-\kappa}$ for all $(r, t) \in (0, R) \times (0, T)$.
  For the proof of \cite[Lemma~3.4]{WinklerBlowupProfilesLife2020}, $\kappa$ needs to additionally satisfy the second condition in \cite[(3.14)]{WinklerBlowupProfilesLife2020},
  namely $n - 2 < \kappa < \beta - 1$.
  This holds by \eqref{eq:heat_inhom:cond_p_beta}.

  Second, the only place where conditions for the initial data $v_0$ explicitly enter the proof of \cite[Lemma~3.4]{WinklerBlowupProfilesLife2020} is in \cite[(3.26)]{WinklerBlowupProfilesLife2020}.
  There the term $\|r^\beta v_0\|_{W^{1, \infty}((0, R))}$ appears which \cite{WinklerBlowupProfilesLife2020} controls by a bound for $v_0$ in $\sob1\infty$
  but of course the condition \eqref{eq:heat_inhom:cond_xv_0_w1infty} is sufficient.
\end{proof}

\subsection{Proof of Theorem~\ref{th:pw_ks}}
\begin{proof}[Proof of Theorem~\ref{th:pw_ks}]
  Applying Lemma~\ref{lm:x_beta_nabla_v_est} to $g \defs u$ and Theorem~\ref{th:pw_scalar} to $f \defs -\nabla v$ yields the desired estimates.
\end{proof}

\section*{Acknowledgments}
We thank the reviewer for their constructive comments that significantly enhanced the manuscript.
The first author acknowledges support of the Natural Science Foundation of Shanghai within the project 23ZR1400100.

\section*{Conflict of interest statement}
On behalf of all authors, the corresponding author states that there is no conflict of interest.\\

\section*{Data availability statement}
No new data were created or analysed during this study. Data sharing is not applicable to this article.

\footnotesize

\end{document}